\documentclass[12pt]{article}

\setlength{\textwidth}{6.3in}
\setlength{\textheight}{8.7in}
\setlength{\topmargin}{0in}
\setlength{\headsep}{0pt}
\setlength{\headheight}{0pt}
\setlength{\oddsidemargin}{0pt}
\setlength{\evensidemargin}{0pt}
\parskip 1ex

\usepackage{amsfonts,amsthm}
\usepackage{amsmath}
\usepackage{amssymb}
\usepackage{latexsym,color}
\usepackage{amsbsy}
\usepackage{graphicx}
\usepackage{youngtab}
\usepackage{hyperref}
\usepackage{enumerate}
\usepackage{tikz}
%\usetikzlibrary{shapes,arrows}
%\tikzstyle{line}=[draw] 
%\tikzset{mycirc/.style={circle,fill=black, minimum size=0.1mm, inner sep = 1.5pt}}
\usepackage{mathtools}
\usepackage{comment}
\DeclareMathAlphabet{\mathpzc}{OT1}{pzc}{m}{it}

\newtheorem{thm}{Theorem}[section]

\newtheorem{lemma}[thm]{Lemma}
\newtheorem{cor}[thm]{Corollary}

\newtheorem{example}[thm]{Example}
\newtheorem{definition}[thm]{Definition}

\newtheorem{remark}[thm]{Remark}

\allowdisplaybreaks

\newcommand{\ncom}{\newcommand}
\ncom{\ka}{\kappa}
\ncom{\Akrpn}{\mathbb{C}A_{k}(r,p,n)}
\ncom{\Akhrpn}{\mathbb{C}A_{k+\frac{1}{2}}(r,p,n)}
\ncom{\C}{\mathbb{C}}
\ncom{\Z}{\mathbb{Z}}
\ncom{\Grpn}{G(r,p,n)}
\ncom{\Drpn}{\mbox{D}(r,p,n)}
\ncom{\Lrpn}{L(r,p,n)}
\ncom{\tabx}{\mbox{tab}_G} 
\ncom{\Vk}{V^{\otimes k}}
\ncom{\Vkk}{V^{\otimes (k+1)}}
\ncom{\Vkh}{V^{\otimes (k+\frac{1}{2})}}
\ncom{\kh}{k+\frac{1}{2}}
\ncom{\tcr}{\textcolor{red}}
\ncom{\vi}{v_{i_1} \otimes v_{i_2} \otimes \cdots \otimes v_{i_k}}
\ncom{\vip}{v_{i_{1'}} \otimes v_{i_{2'}} \otimes \cdots \otimes v_{i_{k'}}}
\ncom{\itp}{{i_{1'}, i_{2'}, \ldots, i_{k'}}}
\ncom{\is}{{i_1,i_2, \ldots, i_k}}
\ncom{\phik}{(\phi_k(x_d))^{i_1,i_2, \ldots, i_k}_{i_{1'}, i_{2'}, \ldots, i_{k'}}}
\ncom{\Y}{{\cal Y}}
\ncom{\N}{{\cal N}}
\ncom{\La}{\lambda}
\ncom{\tla}{\tilde{\La}}
\ncom{\tm}{\tilde{\mu}}
\ncom{\sh}{\text{sh}}
\ncom{\ST}{\text{ST}}
\ncom{\spec}{\mbox{spec}_G}
\ncom{\A}{{\cal A}}
\ncom{\cont}{\mbox{cont}}
\ncom{\contg}{\mbox{cont}_G}
\ncom{\Tab}{\mbox{Tab}(r, \La)}
\ncom{\T}{\mathpzc{T}}
\ncom{\Tn}{\mathpzc{T}_k(r,p,n)}
\ncom{\Thn}{\mathpzc{T}_{k+\frac{1}{2}}(r,p,n)}
\ncom{\Tmn}{\mathpzc{T}_{k-\frac{1}{2}}(r,p,n)}
\ncom{\vo}{\varOmega}
\ncom{\g}{\zeta}
\ncom{\pv}{\mathpzc{v}}
\ncom{\PP}{\mathpzc{P}}

\newcommand\Myperm[2][^n]{\prescript{#1\mkern-2.5mu}{}P_{#2}}

\title{\Large{\textcolor{black}{\bf On representation theory of partition algebras for complex reflection groups}}}
\author{Ashish Mishra and Shraddha Srivastava}
\newcommand{\Addresses}{{
  \bigskip
  \footnotesize

  \textsc{Instituto de Ci\^{e}ncias Exatas e Naturais, Universidade Federal do Par\'{a}, Bel\'{e}m, Brazil}\par\nopagebreak
  \textit{E-mail address}: \texttt{ashishmsr84@gmail.com}

  \medskip

  \textsc{Institute of Mathematical Sciences, Chennai, India}\par\nopagebreak
  \textit{E-mail address}: \texttt{shrashri@imsc.res.in}

}}

\begin{document}

\date{}

\maketitle

\begin{abstract}
This paper defines the partition algebra, denoted by $\Tn$, for complex reflection group $G(r,p,n)$ acting on $k$-fold tensor product $(\C^n)^{\otimes k}$, where $\C^n$ is the reflection representation of $\Grpn$. A basis of the centralizer algebra of this action of $\Grpn$ was given by Tanabe and for $p =1$, the corresponding partition algebra was studied by Orellana.
We also define a subalgebra $\Thn$ such that $\Tn \subseteq \Thn \subseteq \T_{k+1}(r,p,n)$ and establish this subalgebra as partition algebra of a subgroup of $G(r,p,n)$ acting on $(\C^n)^{\otimes k}$. We call the algebras $\Tn$ and $\Thn$ as Tanabe algebras. The aim of this paper is to study representation theory of Tanabe algebras: parametrization of their irreducible modules, and construction of Bratteli diagram for the tower of Tanabe algebras $$\T_0(r,p,n) \subseteq \T_{\frac{1}{2}}(r,p,n) \subseteq \T_1(r,p,n) \subseteq \T_{\frac{3}{2}}(r,p,n) \subseteq \cdots \subseteq \T_{\lfloor \frac{n}{2}\rfloor}(r,p,n).$$ We conclude the paper by giving Jucys-Murphy elements of Tanabe algebras and their actions on the Gelfand-Tsetlin basis, determined by this multiplicity free tower, of irreducible modules.
\end{abstract}

\noindent{\bf 2010 MSC:} Primary 05E10, 20F55; Secondary 20C15.

\noindent{\bf Keywords:} Complex reflection groups, Tanabe algebras, Schur-Weyl duality, Jucys-Murphy elements.

\section{Introduction}\label{int}

The symmetric group $S_{k}$ acts on the $k$-fold tensor product $\Vk$ of the $n$-dimensional vector space $V = \C^n$ over the field of complex numbers $\C$. The general linear group $GL_{n}(\mathbb{C})$ acts on $\Vk$ diagonally where $V$ is the defining representation of $GL_n(\C)$. These two actions commute; moreover, they generate the centralizers of each other. This is known as the classical Schur-Weyl duality \cite{Gre07}. 

Jones \cite{J94} and Martin \cite{M94}, independently, defined partition algebra $\C A_{k}(q)$, where $q\in \C$, as a generalization of Temperley-Lieb algebras and Potts model in higher dimensional statistical mechanics. The symmetric group $S_{n}$, being the subgroup of permutation matrices in $GL_n(\C)$, acts on $V^{\otimes k}$. Jones \cite{J94} proved Schur-Weyl duality between the partition algebra $\C A_k(n)$ and the symmetric group $S_n$ acting on $\Vk$.  Furthermore, Martin and Saleur studied the structure of partition algebras in \cite{MS93, MS94} and proved that the partition algebra $\C A_k(n)$ is semisimple unless $n$ is an integer such that $0 \leq n < 2k-1$.

The subalgebra $\C A_{k+\frac{1}{2}}(n)$ of partition algebra $\C A_{k+1}(n)$ was introduced by Martin \cite{MR98, M00}. Halverson and Ram \cite{HR05} showed Schur-Weyl duality between $\C A_{k+\frac{1}{2}}(n)$ and the subgroup $S_{n-1}$ of $S_n$; and thus established it to be equally important as partition algebra $\C A_k(n)$. It was also shown in \cite{HR05} that the branching rule is multiplicity free for $\C A_{l-\frac{1}{2}}(n) \subseteq \C A_l(n)$ for $ l \in \frac{1}{2}\Z_{>0}$ whenever both the algebras are semisimple.
Recursively building the Bratteli diagram for the tower of partition algebras $$\C A_0(n) \subseteq \C A_{\frac{1}{2}}(n) \subseteq \C A_1(n) \subseteq \C A_{\frac{3}{2}}(n)  \subseteq \cdots,$$ the Jucys-Murphy elements of partition algebras were also given in \cite[Theorem 3.37]{HR05}. Later, the seminormal representations of parition algebra were derived by Enyang \cite{E13}.

The complex reflection group $G(r,p,n)$, where $r,p$ and $n$ are positive integers such that $p$ divides $r$, is a subgroup of $GL_{n}(\C)$. The group $G(r,1,n)$ is the wreath product of the cyclic group $\Z/r\Z$ by the symmetric group $S_n$ and $\Grpn$ is a normal subgroup of index $p$ of $G(r,1,n)$.
Shephard and Todd \cite{ST54} gave a classification of finite irreducible complex reflection groups. It was shown there that the families of groups $S_n$ for $n>1$, $\Z/r\Z$ for $r >1$, and $\Grpn$ (except when $(r,p,n) = (2,2,2) \mbox{ or } (1,1,1)$) are the only infinite families of finite irreducible complex reflection groups and there are exactly $34$ more finite irreducible complex reflection groups. Also they characterized the group $\Grpn$, for $n>1$, by showing that these are the only finite irreducible imprimitive complex reflection groups up to isomorphism.

 The restriction of the action of $GL_n(\C)$ on $V$ to $\Grpn$ is the reflection representation of $\Grpn$. Tanabe \cite[Lemma 2.1]{T97} described a basis of the centralizer algebra of the action of $G(r,p,n)$ on the tensor space $V^{\otimes k}$. Orellana \cite{Ore07} defined a subalgebra $\mathpzc{T}_{k}(n,r)$ of partition algebra $\C A_k(n)$, and proved Schur-Weyl duality between $\T_{k}(n,r)$ and $G(r,1,n)$ \cite[Theorem 5.4]{Ore07}. Also, she recursively constructed the Bratteli diagram for the tower of algebras $$\T_{0}(n,r)\subseteq \T_{1}(n,r)\subseteq \T_{2}(n,r) \subseteq \cdots$$ in \cite[Proposition 5.6]{Ore07}.

In this paper, we define a subalgebra, denoted by $\T_k(r,p,n)$, of partition algebra $\C A_k(n)$ such that there is Schur-Weyl duality between $\T_k(r,p,n)$ and the complex reflection group $\Grpn$. In particular, for $p=1$, the algebra $\T_k(r,1,n)$ is equal to the algebra $\T_k(n,r)$ defined by Orellana. Along the lines of \cite{HR05}, we introduce a subgroup $\Lrpn$ of $\Grpn$ which plays a role analogous to the subgroup $S_{n-1}$ of $S_n$ in the study of partition algebra. We define a subalgebra, denoted by $\T_{k+\frac{1}{2}}(r,p,n)$, of partition algebra $\C A_{k+\frac{1}{2}}(n)$ and exhibit Schur-Weyl duality between $\T_{k+\frac{1}{2}}(r,p,n)$ and $\Lrpn$. Thus, the algebras $\T_k(r,p,n)$ and $\T_{k+\frac{1}{2}}(r,p,n)$ are partition algebras for the complex reflection group $\Grpn$ and its subgroup $\Lrpn$ respectively. We call the algebras $\Tn$ and $\Thn$ as {\em Tanabe algebras}. 

The main results in this paper are as follows.
\begin{enumerate}[(a)]
 \item For Tanabe algebras:
       \begin{enumerate}[(i)]
         \item Decomposition of the centralizer algebras $\mbox{End}_{\Grpn}(\Vk)$ and $\mbox{End}_{\Lrpn}(\Vk)$ into their irreducible modules which, in particular, gives parametrization of the irreducible modules of Tanabe algebras $\Tn$ and $\Thn$ for $n \geq 2k$ and $n \geq 2k+1$ respectively (Theorem \ref{irrend}). 
         
         \item Construction of Bratteli diagram recursively for the tower 
          $$\T_0(r,p,n) \subseteq \T_{\frac{1}{2}}(r,p,n) \subseteq \T_1(r,p,n) \subseteq \T_{\frac{3}{2}}(r,p,n) \subseteq \cdots \subseteq \T_{\lfloor \frac{n}{2}\rfloor}(r,p,n) .$$
          In this case, the Bratteli diagram is a simple graph (Section \ref{bd}).
        \item Description of a specific set of commuting elements, called Jucys-Murphy elements, which act as scalars on the canonical basis, called Gelfand-Tsetlin basis, of each irreducible module of Tanabe algebras (Theorem \ref{jm}).
       \end{enumerate}
 \item For complex reflection groups:
       \begin{enumerate}[(i)]
        \item Construction of a basis of irreducible  $\Grpn$-modules (Theorem \ref{eb}) using a combination of ideas from Okounkov-Vershik approach to the representation theory of $G(r,1,n)$ in \cite{MS16}, Clifford theory and higher Specht polynomials in \cite{MY98}.
        \item Branching rule from $\Grpn$ to $\Lrpn$ (Theorem \ref{brgpl}).
        \item Decomposition of $\Vk$ in terms of irreducible $\Grpn$-modules and $\Lrpn$-modules (Theorem \ref{decvk}).
       \end{enumerate}
\end{enumerate}

Using theory of the basic construction, \cite[Theorem 3.27]{HR05} shows that the necessary and sufficient condition for the semisimplicity of partition algebra $\C A_l(n)$, for $n \in \Z_{\geq 2}$ and $l \in \frac{1}{2}\Z_{\geq 0}$, is $ l \leq \frac{n+1}{2}$. In the case of Tanabe algebras, an important question that still remains to be done is to find a necessary and sufficient condition for their semisimplicity.

The inductive approach to the representation theory of symmetric groups was done by Okounkov and Vershik in \cite{vo, vo1}. This approach considers the chain of symmetric groups $$\{1\} = S_1 \subset S_2 \subset \cdots \subset S_n \subset \cdots $$ to study their representation theory recursively. The advantage over the traditional approach is that the appearance of Young diagrams and standard Young tableaux is given a spectral explanation, and the braching rule is determined simultaneously. The Gelfand-Tsetlin decomposition, the Gelfand-Tsetlin algebra, the canonical Gelfand-Tsetlin basis of the irreducible representations, and the Jucys-Murphy elements, a set of generators of Geland-Tsetlin algebra, are fundamental to this approach. The corresponding approach in the case of $G(r,1,n)$ proves fruitful in giving new proofs of some known results and also in establishing new results in this paper.

This paper is organized in the following sections. Section \ref{pre} gives a brief introduction to partition algebra, Okounkov-Vershik approach to the representation theory of $G(r,1,n)$, and Clifford theory. 
In Section \ref{ta}, we define Tanabe algebras, $\Tn$ and $\Thn$, as subspaces of partition 
algebras and prove that these subspaces are algebras.

Section \ref{crg} contains a description of representation theory of complex reflection group $G(r,p,n)$ and its subgroup $L(r,p,n)$ (Theorems \ref{irrgp} and \ref{irrlp}). We review the representation theory of $\Grpn$ using Clifford theory. We parametrize the irreducible $L(r,1,n)$-modules and, then by Clifford theory, determine the representation theory of $L(r,p,n)$. This section concludes with the branching rule from $\Grpn$ to $\Lrpn$. 

In Section \ref{swdu}, we demonstrate that $\Tn$ and $\Thn$ are in Schur-Weyl duality with $\Grpn$ and $\Lrpn$ respectively.
Using results from Section \ref{crg}, Section \ref{bd} starts with the decomposition of $\Vk$ as $\Grpn$-module and $\Lrpn$-module. Then, we give decomposition of $\Vk$ as $(\Grpn, \Tn)$-bimodule and $(\Lrpn, \Thn)$-bimodule (Theorem \ref{dvkp}) and use it to construct Bratteli diagram of Tanabe algebras. 
In Section \ref{jme}, we give Jucys-Murphy elements and their actions on the canonical Gelfand-Tsetlin basis of irreducible modules of Tanabe algebras.

{\bf Conventions:} Throughout this paper, we assume that 
\begin{enumerate}[(i)]
 \item $r,p,m$ and $n$ are positive integers such that $p$ divides $r$ and $m = \frac{r}{p}$, and 
 \item we index the components in a $w$-tuple from $1, \ldots, w$, therefore, for a multiple $t$ of $w$, the $t(\mbox{mod }w)$-th component means the $w$-th component.
\end{enumerate}

\section{Preliminaries}\label{pre}
In this section, we give an overview of partition algebra, Okounkov-Vershik approach and Clifford theory to set up notations and to state basic definitions and results used in the rest of the paper.

\subsection{Partition algebra}\label{pa}
For $k \in \Z_{\geq 0}$, let $A_k$ be the set of all set partitions of $\{1,2,\ldots,k,1', 2', \ldots, k'\}$. Given an element $d \in A_k$, we say that $i$ and $j$ are in the same block in $d$ if $i$ and $j$ belong to the same set partition in $d$. The elements of $A_k$ can be depicted as graphs, called partition diagrams, with the vertices $\{1,2,, \ldots, k\}$ and $\{1',2', \ldots, k'\}$ in the top and bottom rows respectively and two vertices in the same block are connected by an edge. By $d = (B_1, B_2, \ldots, B_s)$, we denote that there are exactly $s$ blocks $B_1, B_2, \ldots, B_s$ in $d$. Also, $|d|$ denotes the number of blocks in $d$. 

The multiplication of two elements $d_1, d_2 \in A_k$, denoted by $d_1 \circ d_2$, is obtained by concatenating the diagrams $d_1$ and $d_2$ in the following way: place $d_1$ above $d_2$, identify the vertices in the bottom row of $d_1$ with the vertices in the top row of $d_2$, then remove all the connected blocks which are entirely in the middle row. The multiplication $\circ$ makes $(A_k, \circ)$ a monoid with the identity element given by 

\begin{center}
\begin{tikzpicture}[scale=1,mycirc/.style={circle,fill=black, minimum size=0.1mm, inner sep = 1.5pt}]
    \node[mycirc,label=above:{$1$}] (n1) at (0,1) {};
    \node[mycirc,label=above:{$2$}] (n2) at (1,1) {};
    \node[mycirc,label=above:{$3$}] (n3) at (2,1) {};
    \node(a) at (3,0.5) {...};
    \node[mycirc,label=above:{$i$}] (ni) at (4,1) {};
    \node(b) at (5,0.5) {...};
    \node[mycirc,label=above:{$(k-1)$}] (nk-1) at (6,1) {};
    \node[mycirc,label=above:{$k$}] (nk) at (7,1) {};
    \node[mycirc,label=below:{$1'$}] (n1') at (0,0) {};
    \node[mycirc,label=below:{$2'$}] (n2') at (1,0) {};
    \node[mycirc,label=below:{$3'$}] (n3') at (2,0) {};
    \node[mycirc,label=below:{$i'$}] (ni') at (4,0) {};
    \node[mycirc,label=below:{$(k-1)'$}] (nk-1') at (6,0) {};
    \node[mycirc,label=below:{$k'$}] (nk') at (7,0) {};
    \draw (n1) -- (n1');
    \draw (n2) -- (n2');
    \draw (n3) -- (n3');
    \draw (ni) -- (ni');
    \draw (nk-1) -- (nk-1');
    \draw (nk) -- (nk');
    \node(a) at (7.4,0.3) {.};
\end{tikzpicture}
\end{center}

Define a subset $A_{k+\frac{1}{2}}$ of $A_{k+1}$ consisting of those elements which have $(k+1)$ and $(k+1)'$ in the same block. It can be easily seen that $A_{k+\frac{1}{2}}$ is a submonoid of $A_{k+1}$. The monoids $A_k$ and $A_{k+\frac{1}{2}}$ are called partition monoids. 

\begin{example}\label{pd} Taking $k=6$, the elements $d_1$ and $d_2$ in $A_6$ with $$d_1 = (\{1,2,1'\},\{3,5,3'\},\{4\},\{6,5'\},\{2',4'\},\{6'\}) \in A_6$$ $$\mbox{ and }d_2 = (\{1,5,2',3'\},\{2,4\},\{3\},\{6,6'\},\{1',4'\},\{5'\}) \in A_6,$$ can be written in terms of partition diagrams as: 

  \begin{tikzpicture}[scale=0.9,mycirc/.style={circle,fill=black, minimum size=0.1mm, inner sep = 1.5pt}
    ]
    \node (1) at (-1,0.5) {$d_1 =$};
    \node[mycirc,label=above:{$1$}] (n1) at (0,1) {};
    \node[mycirc,label=above:{$2$}] (n2) at (1,1) {};
    \node[mycirc,label=above:{$3$}] (n3) at (2,1) {};
    \node[mycirc,label=above:{$4$}] (n4) at (3,1) {};
    \node[mycirc,label=above:{$5$}] (n5) at (4,1) {};
    \node[mycirc,label=above:{$6$}] (n6) at (5,1) {};
    \node[mycirc,label=below:{$1'$}] (n1') at (0,0) {};
    \node[mycirc,label=below:{$2'$}] (n2') at (1,0) {};
    \node[mycirc,label=below:{$3'$}] (n3') at (2,0) {};
    \node[mycirc,label=below:{$4'$}] (n4') at (3,0) {};
    \node[mycirc,label=below:{$5'$}] (n5') at (4,0) {};
    \node[mycirc,label=below:{$6'$}] (n6') at (5,0) {};
    \draw (n1) -- (n2);
    \draw (n1) -- (n1');
    \draw (n3) .. controls(3,0.7) .. (n5) -- (n3');
    \draw (n2') .. controls(2,0.3) .. (n4');
    \draw (n6) -- (n5');
    \node(a) at (5.4,0.3) {,};

    \node (1) at (6.5,0.5) {$d_2 =$};
    \node[mycirc,label=above:{$1$}] (n7) at (7.5,1) {};
    \node[mycirc,label=above:{$2$}] (n8) at (8.5,1) {};
    \node[mycirc,label=above:{$3$}] (n9) at (9.5,1) {};
    \node[mycirc,label=above:{$4$}] (n10) at (10.5,1) {};
    \node[mycirc,label=above:{$5$}] (n11) at (11.5,1) {};
    \node[mycirc,label=above:{$6$}] (n12) at (12.5,1) {};
    \node[mycirc,label=below:{$1'$}] (n7') at (7.5,0) {};
    \node[mycirc,label=below:{$2'$}] (n8') at (8.5,0) {};
    \node[mycirc,label=below:{$3'$}] (n9') at (9.5,0) {};
    \node[mycirc,label=below:{$4'$}] (n10') at (10.5,0) {};
    \node[mycirc,label=below:{$5'$}] (n11') at (11.5,0) {};
    \node[mycirc,label=below:{$6'$}] (n12') at (12.5,0) {};
    \draw (n7) -- (n8') -- (n9') -- (n11);
    \draw (n8) .. controls(9.5, 0.7) .. (n10);
    \draw (n7') .. controls(9,0.3) .. (n10');
    \draw (n12) -- (n12');
    \node(a) at (12.8,0.3) {.};   
    \end{tikzpicture}

\noindent The multiplication $d_1 \circ d_2$ is:
  
\begin{center} 
  \begin{tikzpicture}[scale=1,mycirc/.style={circle,fill=black, minimum size=0.1mm, inner sep = 1.5pt}
    ]
    \node (1) at (-1,0.5) {$d_1 =$};
    \node[mycirc,label=above:{$1$}] (n1) at (0,1) {};
    \node[mycirc,label=above:{$2$}] (n2) at (1,1) {};
    \node[mycirc,label=above:{$3$}] (n3) at (2,1) {};
    \node[mycirc,label=above:{$4$}] (n4) at (3,1) {};
    \node[mycirc,label=above:{$5$}] (n5) at (4,1) {};
    \node[mycirc,label=above:{$6$}] (n6) at (5,1) {};
    \node[mycirc,label=below:{$1'$}] (n1') at (0,0) {};
    \node[mycirc,label=below:{$2'$}] (n2') at (1,0) {};
    \node[mycirc,label=below:{$3'$}] (n3') at (2,0) {};
    \node[mycirc,label=below:{$4'$}] (n4') at (3,0) {};
    \node[mycirc,label=below:{$5'$}] (n5') at (4,0) {};
    \node[mycirc,label=below:{$6'$}] (n6') at (5,0) {};
    \draw (n1) -- (n2);
    \draw (n1) -- (n1');
    \draw (n3) .. controls(3,0.7) .. (n5) -- (n3');
    \draw (n2') .. controls(2,0.3) .. (n4');
    \draw (n6) -- (n5');

    \node (1) at (-1,-2) {$d_2 =$};
    \node[mycirc,label=above:{$1$}] (n7) at (0,-1.5) {};
    \node[mycirc,label=above:{$2$}] (n8) at (1,-1.5) {};
    \node[mycirc,label=above:{$3$}] (n9) at (2,-1.5) {};
    \node[mycirc,label=above:{$4$}] (n10) at (3,-1.5) {};
    \node[mycirc,label=above:{$5$}] (n11) at (4,-1.5) {};
    \node[mycirc,label=above:{$6$}] (n12) at (5,-1.5) {};
    \node[mycirc,label=below:{$1'$}] (n7') at (0,-2.5) {};
    \node[mycirc,label=below:{$2'$}] (n8') at (1,-2.5) {};
    \node[mycirc,label=below:{$3'$}] (n9') at (2,-2.5) {};
    \node[mycirc,label=below:{$4'$}] (n10') at (3,-2.5) {};
    \node[mycirc,label=below:{$5'$}] (n11') at (4,-2.5) {};
    \node[mycirc,label=below:{$6'$}] (n12') at (5,-2.5) {};
    \draw (n7) -- (n8') -- (n9') -- (n11);
    \draw (n8) .. controls(2, -1.9) .. (n10);
    \draw (n7') .. controls(1.5,-2) .. (n10');
    \draw (n12) -- (n12');
    
    \draw[dashed] (n1') .. controls(-0.5,-1)..(n7);
    \draw[dashed] (n2') .. controls(0.5,-1)..(n8);
    \draw[dashed] (n3') .. controls(1.5,-1)..(n9);
    \draw[dashed] (n4') .. controls(2.5,-1)..(n10);
    \draw[dashed] (n5') .. controls(3.5,-1)..(n11);
    \draw[dashed] (n6') .. controls(4.5,-1)..(n12);
\end{tikzpicture}
\end{center}

Thus,

\begin{center}

\begin{tikzpicture}[scale=1,mycirc/.style={circle,fill=black, minimum size=0.1mm, inner sep = 1.5pt}
    ]
    \node (1) at (-1,0.5) {$d_1 \circ d_2 =$};
    \node[mycirc,label=above:{$1$}] (n1) at (0,1) {};
    \node[mycirc,label=above:{$2$}] (n2) at (1,1) {};
    \node[mycirc,label=above:{$3$}] (n3) at (2,1) {};
    \node[mycirc,label=above:{$4$}] (n4) at (3,1) {};
    \node[mycirc,label=above:{$5$}] (n5) at (4,1) {};
    \node[mycirc,label=above:{$6$}] (n6) at (5,1) {};
    \node[mycirc,label=below:{$1'$}] (n1') at (0,0) {};
    \node[mycirc,label=below:{$2'$}] (n2') at (1,0) {};
    \node[mycirc,label=below:{$3'$}] (n3') at (2,0) {};
    \node[mycirc,label=below:{$4'$}] (n4') at (3,0) {};
    \node[mycirc,label=below:{$5'$}] (n5') at (4,0) {};
    \node[mycirc,label=below:{$6'$}] (n6') at (5,0) {};
    \draw (n1) -- (n2);
    \draw (n1) -- (n2') -- (n3') -- (n6);
    \draw (n3) .. controls(3,0.7) .. (n5);
    \draw (n1') .. controls(1.5,0.3) .. (n4');
    \node(a) at (5.4,0.3) {.};
    \end{tikzpicture}
\end{center}
\end{example}
 
For a complex number $q$, define 
\begin{center}
$\C A_k(q) := \C\mbox{-}span \{d \in A_k\}$.
\end{center}
The multiplication of basis elements, which when extended linearly makes $\C A_k(q)$ an associative algebra, is defined as: for $d_1, d_2 \in A_k$, define 
\begin{equation*}
 d_1d_2 := q^l d_1\circ d_2
\end{equation*}
where $l$ is the number of blocks removed from the middle row while computing $d_1\circ d_2$. Also, $\C A_{k+\frac{1}{2}}(q)$ is a subalgebra of $\C A_{k+1}(q)$. The algebras $\C A_k(q)$ and $\C A_{k+\frac{1}{2}}(q)$ are called {\em partition algebras}.

\begin{example}
In example \ref{pd}, the product $d_1d_2$ in $\C A_6(q)$ is given by

\begin{center}
 \begin{tikzpicture}[scale=0.9,mycirc/.style={circle,fill=black, minimum size=0.1mm, inner sep = 1.5pt}
    ]
    \node (1) at (-1.5,0.5) {$d_1d_2 =$};
    \node (2) at (-0.5,0.5) {$q$};
    \node[mycirc,label=above:{$1$}] (n1) at (0,1) {};
    \node[mycirc,label=above:{$2$}] (n2) at (1,1) {};
    \node[mycirc,label=above:{$3$}] (n3) at (2,1) {};
    \node[mycirc,label=above:{$4$}] (n4) at (3,1) {};
    \node[mycirc,label=above:{$5$}] (n5) at (4,1) {};
    \node[mycirc,label=above:{$6$}] (n6) at (5,1) {};
    \node[mycirc,label=below:{$1'$}] (n1') at (0,0) {};
    \node[mycirc,label=below:{$2'$}] (n2') at (1,0) {};
    \node[mycirc,label=below:{$3'$}] (n3') at (2,0) {};
    \node[mycirc,label=below:{$4'$}] (n4') at (3,0) {};
    \node[mycirc,label=below:{$5'$}] (n5') at (4,0) {};
    \node[mycirc,label=below:{$6'$}] (n6') at (5,0) {};
    \draw (n1) -- (n2);
    \draw (n1) -- (n2') -- (n3') -- (n6);
    \draw (n3) .. controls(3,0.7) .. (n5);
    \draw (n1') .. controls(1.5,0.3) .. (n4');
    \end{tikzpicture}
\end{center}    
since one block has been removed from the middle row.
\end{example}

\subsection{The Okounkov-Vershik approach}\label{ov}
Let $G^n$ denote the direct product of $n$-copies of a finite group $G$.
The action of the symmetric group $S_n$ on $G^n$ by permuting the coordinates defines the semidirect product of $G^n$ by $S_n$. The group $G^n \rtimes S_n$ is also known as wreath product of $G$ by $S_n$. We use the notation $G(r,1,n) := G^n \rtimes S_n$ throughout for the particular case when $G = \Z/r\Z = \langle \zeta \rangle$, the cyclic group of order $r$ with $\zeta$ being a primitive $r$-th root of unity. Thus, $$G(r,1,n) = \{(g_1, g_2, \ldots, g_n, \pi) \mid g_i  \in \Z/r\Z \mbox{ for } i = 1, \ldots, n \mbox{ and } \pi \in S_n \}.$$

In this section, we follow \cite{MS16} and present here a brief summary of Okounkov-Vershik approach to the representation theory of $G(r,1,n)$.

Consider the following chain of subgroups of $G(r,1,n)$
\begin{equation} \label{mfic}
H_{1,n}\subseteq H_{2,n} \subseteq \cdots \subseteq H_{n,n} := G(r,1,n),
\end{equation}
where, for $1\leq i \leq n$, 
$$ H_{i,n} := \{(g_1,\ldots ,g_n,\pi)\in G(r,1,n) \;|\;
            \pi(j)=j \mbox{ for }i+1\leq j\leq n\}.
$$
The irreducible representations of  $H_{1,n} \cong G^n = (\Z/r\Z)^n$ are one-dimensional. 

The following well-known result of Wigner is useful in proving that the chain (\ref{mfic}) is multiplicity free. 
\begin{thm} \label{mfc} 
Let $M$ be a complex finite dimensional semisimple algebra and let $N$ be a
semisimple subalgebra. 
Then the relative commutant of the pair $M$ and $N$, denoted by $Z(M,N)$, consisting of all the elements of $M$ that commute with the elements of $N$ is semisimple and the
following conditions are equivalent:
\begin{enumerate}[(a)]
\item The restriction of any finite dimensional complex irreducible
representation of $M$ to $N$ is multiplicity free.

\item The relative commutant $Z(M,N)$ is commutative.
\end{enumerate}
\end{thm}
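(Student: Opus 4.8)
The plan is to establish the equivalence of the two conditions through the standard double-centralizer and Artin-Wedderburn machinery, treating $Z(M,N)$ as the centralizer and exploiting the decomposition of $M$-irreducibles upon restriction to $N$. First I would recall that since $M$ is a finite-dimensional complex semisimple algebra, we may write $M \cong \bigoplus_{\lambda} \mathrm{End}(M^\lambda)$ where $\{M^\lambda\}$ runs over the irreducible $M$-modules; likewise $N \cong \bigoplus_{\mu} \mathrm{End}(N^\mu)$. The key object is the restriction functor: each irreducible $M$-module $M^\lambda$ decomposes as an $N$-module into $\bigoplus_\mu c^\lambda_\mu N^\mu$ for nonnegative integer multiplicities $c^\lambda_\mu$. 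I would first verify that $Z(M,N)$ is semisimple; the clean way is to observe that $Z(M,N) = \mathrm{End}_N(M)$ where $M$ is viewed as a left $N$-module (via the inclusion) and simultaneously a right $M$-module, and the endomorphism algebra of a semisimple module over a semisimple algebra is itself semisimple. This handles the preliminary assertion before the equivalence.

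Next I would compute $Z(M,N)$ explicitly via Schur's lemma. Decomposing $M$ as an $(N,M)$-bimodule and using that over $\C$ all division algebras appearing are just $\C$, one obtains
\begin{equation*}
Z(M,N) \;\cong\; \bigoplus_{\lambda} \bigoplus_{\mu} \mathrm{Mat}_{c^\lambda_\mu}(\C),
\end{equation*}
where the outer sum is over irreducible $M$-modules $M^\lambda$ and the inner sum over irreducible $N$-modules $N^\mu$, and $c^\lambda_\mu$ is the multiplicity of $N^\mu$ in $\mathrm{Res}^M_N M^\lambda$. The heart of the matter is that $\mathrm{Hom}_N(N^\mu, M^\lambda)$ has dimension exactly $c^\lambda_\mu$ by Schur's lemma, and assembling these Hom-spaces while respecting how they sit inside $\mathrm{End}_N(M)$ gives the matrix-algebra block structure.

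With this explicit description in hand, the equivalence falls out immediately from the characterization of commutativity of a direct sum of matrix algebras. The algebra $\bigoplus_{\lambda,\mu} \mathrm{Mat}_{c^\lambda_\mu}(\C)$ is commutative if and only if every block $\mathrm{Mat}_{c^\lambda_\mu}(\C)$ is commutative, which happens precisely when every $c^\lambda_\mu \in \{0,1\}$. But $c^\lambda_\mu \leq 1$ for all $\mu$ (and all $\lambda$) is exactly the statement that the restriction of each irreducible $M$-module to $N$ is multiplicity free. Thus condition (a) holds if and only if $Z(M,N)$ is commutative, which is condition (b).

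The step I expect to be the main obstacle is making the isomorphism for $Z(M,N)$ fully rigorous, namely correctly identifying the relative commutant with $\mathrm{End}_N(M)$ as a right $M$-module and then tracking the bimodule structure so that the Schur-lemma count $\dim \mathrm{Hom}_N(N^\mu, M^\lambda) = c^\lambda_\mu$ assembles into genuine matrix blocks of the asserted sizes. Everything else is formal once this bookkeeping is pinned down. An alternative, perhaps cleaner, route that avoids some of this is to apply the double centralizer theorem directly: realize $M$ acting on itself by right multiplication, note $N \subseteq M$ acts on the left, and identify $Z(M,N)$ with the commutant of the $N$-action, whose structure is then read off from the isotypic decomposition of $M$ as an $N$-module. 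Either way, the semisimplicity of both $M$ and $N$ over the algebraically closed field $\C$ is what guarantees all the relevant modules split completely and that the only endomorphisms of simples are scalars, which is what drives the entire argument.
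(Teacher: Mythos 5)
Your argument is correct. Note first that the paper does not prove this statement at all: it is quoted as a well-known result of Wigner and used as a black box, so there is no in-paper proof to compare against. Your route is the standard one and it works. The one place to be careful is the identification of $Z(M,N)$: as literally written, $\mbox{End}_N(M)$ for $M$ viewed only as a left $N$-module is much larger than the relative commutant; what you need (and what your later phrasing ``$(N,M)$-bimodule'' indicates) is $\mbox{End}_{(N,M)}(M)$, the maps commuting with both the left $N$-action and right multiplication by $M$, which is exactly left multiplication by $C_M(N)$. A slightly cleaner bookkeeping that avoids the bimodule language entirely is to write $M\cong\bigoplus_\lambda \mbox{End}(M^\lambda)$ and observe that an element $(x_\lambda)_\lambda$ commutes with $N$ if and only if each $x_\lambda$ lies in $\mbox{End}_N(M^\lambda)$, so that $Z(M,N)\cong\bigoplus_\lambda \mbox{End}_N(M^\lambda)\cong\bigoplus_{\lambda,\mu}\mathrm{Mat}_{c^\lambda_\mu}(\C)$ directly by Schur's lemma applied to the isotypic decomposition of $\mbox{Res}_N M^\lambda$; semisimplicity and the equivalence of (a) and (b) then read off exactly as you say, since a sum of matrix algebras is commutative precisely when every block has size at most one.
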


Using Theorem 4.3 in \cite{MS16}, we can conclude that the relative commutant of the pair of group algebras $\C [H_{m,n}]$ and $\C [H_{m-1,n}]$ is commutative for all $2 \leq m \leq n$.

For each $i=1,2, \ldots, n$, suppose that $H_{i,n}^{\wedge}$ denotes the indexing set of irreducible $H_{i,n}$-modules and given $\La \in H_{i,n}^{\wedge}$, assume that $V^{\La}$ denotes the corresponding $H_{i,n}$-module. Bratteli diagram of the chain (\ref{mfic}) is a simple graph in which the vertices at $i$-th level are elements of $H_{i,n}^{\wedge}$ and a vertex $\mu\in H_{i-1,n}^{\wedge}$ is joined by an edge with a vertex $\lambda\in H_{i,n}^{\wedge}$ if $V^\mu$ appears in the restriction of $V^\lambda$ to $H_{i-1,n}$. 

For a fixed $1 \leq m \leq n$, consider the $H_{m,n}$-module $V^{\La_m}$, where $\lambda_m\in H_{m,n}^{\wedge}$. The branching rule being multiplicity free implies that the decomposition of $V^{\La_m}$ into irreducible $H_{m-1,n}$-modules is canonical, and the decomposition is 
$$V^{\lambda_m}=\bigoplus_{\La_{m-1}} V^{\La_{m-1}},$$
where the sum is over all $\La_{m-1}\in H_{m-1,n}^{\wedge}$ with an edge from  $\La_{m-1}$ to $\La_m$ such that $V^{\La_{m-1}}$ is identified with the corresponding submodule of $V^{\lambda_m}$. Iterating this decomposition, a canonical decomposition of $V^{\lambda_m}$ into irreducible $H_{1,n}^{\wedge}$-submodules is
\begin{equation} \label{decomp}
V^{\lambda_m}=\bigoplus_{T} V_T,
\end{equation}
where the sum is over all possible paths $T=(\lambda_1, \lambda_2, \ldots, \lambda_m)$ from a vertex in $H_{1,n}$ to $\La_m$ in Bratteli diagram.
\begin{equation} \label{ch}
T=\lambda_1\nearrow\lambda_2 \nearrow \cdots \nearrow \lambda_m,
\end{equation}
with $\lambda_i\in H_{i,n}^{\wedge}$ for $ 1 \leq i \leq m$.

The decomposition (\ref{decomp}) is called the {\em Gelfand-Tsetlin decomposition (GZ-decomposition)} of $V^{\lambda_m}$ and each $V_T$ in (\ref{decomp}) is called a {\em Gelfand-Tsetlin subspace (GZ-subspace)} of $V^{\lambda_m}$. In our case, each GZ-subspace $V_T$ is one-dimensional. Choose a non-zero vector $v_T \in V_T$. The basis $$\{v_T \mid T \mbox{ is a chain in GZ-decomposition of } V^{\La_m}\}$$ of $V^{\La_m}$ is called the {\em Gelfand-Tsetlin basis (GZ-basis)} of $V^{\La_m}$ and it is unique up to scalars
and
\begin{equation*}
\C[H_{i,n}]\cdot v_T = V^{\lambda_{i}},\;\;\;i=1,2,\ldots ,m.
\end{equation*}

Considering the Fourier transform, i.e., the algebra isomorphism
\begin{equation}\label{iso}
\C[H_{m,n}] \cong \bigoplus_{\La_m\in H_{m,n}^{\wedge}} \mbox{End}(V^{\La_m}),
\end{equation}
given by
$$g \mapsto ( V^{\La_m} \buildrel {g}\over \rightarrow V^{\La_m}\;:\;
\La_m
\in H_{m,n}^{\wedge}),\;\;g\in H_{m,n}, 1 \leq m \leq n, $$
we can define {\em Gelfand-Tsetlin algebra (GZ-algebra)}, a subalgebra of $\C[H_{m,n}]$ based on the GZ-decomposition (\ref{decomp}):
\begin{align*}
 GZ_{m,n}  = \{ a\in \C[H_{m,n}] \mid \, & a \mbox{ acts diagonally in the GZ-basis of } V^{\lambda_m},\mbox{ for all }\lambda_m\in H_{m,n}^{\wedge}\}.
\end{align*}
Thus, $GZ_{m,n}$ is a maximal commutative subalgebra of $\C[H_{m,n}]$.
\begin{thm}\cite[Theorem 3.1(i)]{MS16} Let $Z_{i,n}$ denote the center of $\C[H_{i,n}]$ for each $i =1,2, \ldots,m$. Then, 
$$GZ_{m,n} = \langle Z_{1,n}, Z_{2,n}, \ldots ,Z_{m,n} \rangle.$$
\end{thm}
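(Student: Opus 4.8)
The plan is to establish the equality by proving the two inclusions separately, after first recording the structure of $GZ_{m,n}$ as a commutative semisimple algebra. Since each GZ-subspace $V_T$ is one-dimensional and the $V_T$ decompose each $V^{\lambda_m}$, the algebra $GZ_{m,n}$ is precisely the algebra of operators that are scalar on every $V_T$; equivalently $GZ_{m,n} = \bigoplus_T \C e_T$, where $e_T$ is the idempotent projecting onto $V_T$ inside the Fourier isomorphism (\ref{iso}). In particular $GZ_{m,n} \cong \C^{X}$, where $X$ is the set of all paths $T = \lambda_1 \nearrow \cdots \nearrow \lambda_m$ in the Bratteli diagram of (\ref{mfic}), and the algebra homomorphisms $GZ_{m,n} \to \C$ are exactly the maps $\chi_T$ sending an element to the scalar by which it acts on $V_T$. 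This reduces the whole statement to an assertion about characters.

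For the inclusion $\langle Z_{1,n}, \ldots, Z_{m,n}\rangle \subseteq GZ_{m,n}$, I would argue as follows. Fix $1 \le i \le m$ and $z \in Z_{i,n}$. Since $z$ is central in the semisimple algebra $\C[H_{i,n}]$, Schur's lemma implies that $z$ acts as a scalar on each irreducible $H_{i,n}$-module $V^{\lambda_i}$. For any path $T = \lambda_1 \nearrow \cdots \nearrow \lambda_m$ one has $\C[H_{i,n}] \cdot v_T = V^{\lambda_i}$, so $V_T \subseteq V^{\lambda_i}$ and $z$ acts on $V_T$ by the corresponding scalar. Hence $z$ is diagonal in the GZ-basis, i.e.\ $z \in GZ_{m,n}$. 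As this holds for every generator, the generated subalgebra $B := \langle Z_{1,n}, \ldots, Z_{m,n}\rangle$ lies in $GZ_{m,n}$; in particular $B$ is commutative.

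For the reverse inclusion I would use the elementary fact that a unital subalgebra $B$ of $\C^{X}$ equals all of $\C^{X}$ if and only if $B$ separates the points of $X$, i.e.\ the restrictions $\chi_T|_B$ are pairwise distinct. So it suffices to show that $B$ distinguishes any two distinct paths $T = (\lambda_1, \ldots, \lambda_m)$ and $T' = (\lambda_1', \ldots, \lambda_m')$. If $T \neq T'$, they differ at some level $i$, say $\lambda_i \neq \lambda_i'$. Now invoke the standard structural fact that in a semisimple algebra the central character determines the isomorphism class of an irreducible module: under $\C[H_{i,n}] \cong \bigoplus_{\lambda} \mathrm{End}(V^{\lambda})$ the center is $\bigoplus_{\lambda} \C\,\mathrm{Id}_{V^{\lambda}}$, and distinct summands are separated by distinct characters. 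Hence there is $z \in Z_{i,n}$ acting by different scalars on $V^{\lambda_i}$ and $V^{\lambda_i'}$, and therefore on $V_T$ and $V_{T'}$; that is, $\chi_T(z) \neq \chi_{T'}(z)$ with $z \in B$. Thus $B$ separates $X$ and $B = GZ_{m,n}$.

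I expect the separation step to be the crux of the argument. Its two ingredients are (i) that a central character pins down an irreducible module of a semisimple algebra, and (ii) that the multiplicity-freeness of the chain (\ref{mfic}), guaranteed by Wigner's criterion (Theorem \ref{mfc}) together with the commutativity of the relative commutants, is exactly what makes a path $T$ recoverable from its sequence of vertices $(\lambda_1, \ldots, \lambda_m)$ and hence a complete invariant of the one-dimensional GZ-subspace $V_T$. An alternative to the separation argument is a dimension count: $\dim GZ_{m,n} = |X| = \sum_{\lambda_m} \dim V^{\lambda_m}$, and one shows $B$ achieves the same dimension; but the character-separation route seems cleaner and avoids computing $\dim B$ directly.
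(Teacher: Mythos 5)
Your argument is correct, but note that the paper itself gives no proof of this statement: it is quoted verbatim from \cite[Theorem 3.1(i)]{MS16}, so there is nothing internal to compare against. The two-step argument you give (central elements act as scalars on each $V^{\lambda_i} = \C[H_{i,n}]\cdot v_T$, hence diagonally on the GZ-basis; and the generated unital subalgebra of $\bigoplus_T \C e_T \cong \C^X$ separates paths because central characters of $\C[H_{i,n}]$ distinguish non-isomorphic $\lambda_i$, a path being determined by its vertex sequence thanks to multiplicity-freeness) is precisely the standard Okounkov--Vershik proof carried out in the cited reference.
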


By a $GZ$-subspace of $H_{m,n}$ we mean a $GZ$-subspace of some irreducible $H_{m,n}$-module $V^{\lambda_m}$ for some $\lambda_m \in H_{m,n}^{\wedge}$. 

The theorem above implies the following result which is \cite[Lemma 3.2]{MS16}.

\begin{lemma} \phantomsection \label{sc}
\begin{enumerate}[(a)]
\item Let $v\in V^{\lambda_m}$ for some $\lambda_m\in H_{m,n}^{\wedge}$ such that $v$ is an eigenvector of every element of $GZ_{m,n}$, then (a scalar multiple of) $v$ belongs to the $GZ$-basis of $V^{\lambda_m}$.

\item Let $v$ and $u$ be elements in $V^{\La_m}$ and $V^{\mu_m}$ respectively for some $\La_m, \mu_m \in H_{m,n}^{\wedge}$ such that $v$ and $u$ have the same eigenvalues for every element of $GZ_{m,n}$, then $v=u$ and $\La_m = \mu_m$.
\end{enumerate}
\end{lemma}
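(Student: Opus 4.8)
The plan is to deduce both parts from a single \emph{spectral separation} property: the tuple of eigenvalues (the $GZ_{m,n}$-spectrum) of a Gelfand--Tsetlin basis vector $v_T$ determines the path $T = (\lambda_1, \ldots, \lambda_m)$, and in particular its endpoint $\lambda_m$, uniquely. Granting this, part (a) follows because, by the very definition of $GZ_{m,n}$, every element of $GZ_{m,n}$ acts diagonally in the GZ-basis $\{v_T\}$ of $V^{\lambda_m}$ with pairwise distinct spectra, so the joint eigenspaces of $GZ_{m,n}$ are exactly the one-dimensional GZ-subspaces $V_T$; any common eigenvector therefore lies in a single $V_T$ and is a scalar multiple of $v_T$. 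Part (b) is then formal.

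First I would establish separation using the cited identity $GZ_{m,n} = \langle Z_{1,n}, Z_{2,n}, \ldots, Z_{m,n} \rangle$. Fix a path $T=(\lambda_1,\ldots,\lambda_m)$ and the corresponding vector $v_T$. Recall that $\C[H_{i,n}]\cdot v_T = V^{\lambda_i}$, so $v_T$ generates a copy of the irreducible $H_{i,n}$-module $V^{\lambda_i}$; by Schur's lemma each element of the center $Z_{i,n}$ acts on $v_T$ by a scalar, namely the central character of $V^{\lambda_i}$. Since central characters separate the isomorphism classes of irreducible $H_{i,n}$-modules, the eigenvalues of $v_T$ under $Z_{i,n}$ recover $\lambda_i$. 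Running over $i=1,\ldots,m$ recovers the entire path $T$. Because $GZ_{m,n}$ is generated by the $Z_{i,n}$, the $GZ_{m,n}$-spectrum of $v_T$ is determined by, and in turn determines, the tuple of central characters attached to $(\lambda_1,\ldots,\lambda_m)$; this is precisely the separation claim.

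Next I would assemble the two statements. Since the chain (\ref{mfic}) is multiplicity free, each GZ-subspace $V_T$ is one-dimensional and $\{v_T\}$ is a basis of $V^{\lambda_m}$ simultaneously diagonalizing $GZ_{m,n}$. By separation the spectra of distinct $v_T$ are distinct, so every joint eigenspace of $GZ_{m,n}$ in $V^{\lambda_m}$ is at most one-dimensional and equals some $V_T$; a simultaneous eigenvector $v$ therefore lies in one such $V_T$, giving (a). For (b), apply (a) to $v$ and $u$ to obtain basis vectors $v_T$ (with endpoint $\lambda_m$) and $v_{T'}$ (with endpoint $\mu_m$); equality of all eigenvalues together with separation forces $T=T'$, hence $\lambda_m=\mu_m$, and $v,u$ lie in the common one-dimensional GZ-subspace $V_T=V_{T'}$, so they coincide under the normalization implicit in fixing the GZ-basis.

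The main obstacle is the separation step, and within it the two points requiring care are the identity $\C[H_{i,n}]\cdot v_T = V^{\lambda_i}$ and the generation statement $GZ_{m,n}=\langle Z_{1,n},\ldots,Z_{m,n}\rangle$. The former guarantees that $Z_{i,n}$ detects $v_T$ only through the central character of $\lambda_i$, and the latter guarantees that no information beyond this tuple of central characters is carried by the $GZ_{m,n}$-spectrum. Once both are in hand, the remainder is routine linear algebra for a family of commuting, simultaneously diagonalizable operators.
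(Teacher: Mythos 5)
Your proof is correct and follows exactly the route the paper indicates: the paper gives no argument of its own beyond remarking that the lemma is implied by Theorem~2.3 (that $GZ_{m,n}=\langle Z_{1,n},\ldots,Z_{m,n}\rangle$) and citing \cite[Lemma 3.2]{MS16}, and your separation argument via $\C[H_{i,n}]\cdot v_T=V^{\lambda_i}$ and the fact that central characters distinguish irreducibles is precisely the standard way of filling in that deduction. The only cosmetic point is that, as you note, the conclusion $v=u$ in part (b) should be read as equality up to scalar, which is how the paper itself uses it.
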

Thus, a $GZ$-subspace uniquely determines the irreducible representation of $H_{m,n}$ of which it is a $GZ$-subspace. 

The Jucys-Murphy elements for the wreath product of a finite group by a symmetric group were given in \cite{Pu97}. For our particular case $G(r,1,n)$,
the Jucys-Murphy elements can be written as:   
\begin{align*}
& X_1 = 0, \\
\mbox{ and }&  X_j = \sum_{i=1}^{j-1}\sum\limits_{l=0}^{r-1}\zeta_i^{l}\zeta_j^{-l}s_{ij},\;\;2\leq
j\leq n
\end{align*}
where $\zeta_i^{l}\zeta_j^{-l}s_{ij} = (1, \ldots, 1, \g^l, 1 \ldots, 1, \g^{-l}, 1, \ldots, 1, s_{ij}) \in G(r,1,n)$, with $\g^l$ and $\g^{-l}$ as $i$-th and $j$-th coordinates respectively. It is clear that the element $X_j$ belongs to $H_{j,n}$ also.

\begin{thm}\cite[Theorem 4.4]{MS16}\label{yjm3} We have
$$GZ_{m,n} = \langle Z[\C[G^n]], X_1,X_2,\ldots ,X_m \rangle. $$
\end{thm}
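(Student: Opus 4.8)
The plan is to prove the asserted equality by showing that the two generating sets produce the same subalgebra of $\C[H_{m,n}]$, using the preceding theorem, which identifies $GZ_{m,n}$ with $\langle Z_{1,n}, Z_{2,n}, \ldots, Z_{m,n}\rangle$. Write $\mathcal{A}_m := \langle Z[\C[G^n]], X_1, X_2, \ldots, X_m\rangle$. First observe that $H_{1,n}$ consists of those $(g_1, \ldots, g_n, \pi)$ with $\pi$ fixing $2, \ldots, n$, hence $\pi = \mathrm{id}$, so $H_{1,n} \cong G^n$ and $Z_{1,n} = Z[\C[G^n]]$. Thus $Z[\C[G^n]]$ occurs among both generating sets, and it remains to compare the contribution of the $X_j$ with that of the higher centers $Z_{2,n}, \ldots, Z_{m,n}$.

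For the inclusion $\mathcal{A}_m \subseteq GZ_{m,n}$ I would exhibit each $X_j$ as a difference of central elements of consecutive algebras in the chain. For $2 \le j \le m$ set
\[
C_j := \sum_{1 \le i < l \le j}\ \sum_{a=0}^{r-1} \zeta_i^{a}\zeta_l^{-a} s_{il},
\]
with $C_1 := 0$. A direct check shows $C_j \in Z_{j,n}$: conjugating $\zeta_i^{a}\zeta_l^{-a}s_{il}$ by a diagonal element of $G^n$ only shifts the exponent $a$ (the cyclic factors are abelian and $s_{il}$ interchanges the $i$-th and $l$-th coordinates), so summing over $a$ makes the inner sum invariant, while conjugation by $S_j$ permutes the pairs $\{i,l\}$ and hence fixes the outer sum. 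Since by definition $X_j = C_j - C_{j-1}$, we obtain $X_j \in \langle Z_{j-1,n}, Z_{j,n}\rangle \subseteq GZ_{m,n}$ for every $j$ (and $X_1 = 0$); together with $Z[\C[G^n]] = Z_{1,n} \subseteq GZ_{m,n}$ this gives $\mathcal{A}_m \subseteq GZ_{m,n}$, and in particular $\mathcal{A}_m$ is commutative.

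The reverse inclusion $GZ_{m,n} \subseteq \mathcal{A}_m$ is the substantive part and amounts to showing $Z_{i,n} \subseteq \mathcal{A}_m$ for all $i$. I would argue this through separation of the Gelfand--Tsetlin subspaces rather than by expanding class sums directly. Every element of $\mathcal{A}_m$ lies in $GZ_{m,n}$ and hence acts diagonally in the GZ-basis $\{v_T\}$; if I can show that $\mathcal{A}_m$ \emph{separates} these subspaces, i.e. that distinct paths $T = \lambda_1 \nearrow \cdots \nearrow \lambda_m$ yield distinct tuples of joint eigenvalues under $\mathcal{A}_m$, then by Lagrange interpolation each rank-one idempotent projecting onto $v_T$ lies in $\mathcal{A}_m$. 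Since $GZ_{m,n}$ is spanned by exactly these idempotents (its GZ-subspaces are one-dimensional, cf. Lemma \ref{sc}), this forces $\mathcal{A}_m = GZ_{m,n}$ on dimension grounds. To obtain separation I would note that $\C[G^n] \subseteq \mathcal{A}_m$ acts on $v_T$ through the character $\lambda_1$, recovering the bottom of the path since $G^n$ is abelian, while the eigenvalues of $X_2, \ldots, X_m$ record the successive content data along the chain.

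The hard part is precisely this last step: proving that the eigenvalues of the Jucys--Murphy elements, together with the $G^n$-weight $\lambda_1$, determine the whole chain $T$. This is the spectral core of the Okounkov--Vershik method and requires computing the scalar by which $X_j$ acts on a GZ-subspace of $H_{j,n}$ and verifying that the resulting content vectors are pairwise distinct across paths; the multiplicity-freeness of the chain (\ref{mfic}), already guaranteed by Theorem \ref{mfc}, is what makes each eigenvalue tuple well defined and the interpolation argument valid. If the explicit spectrum proves inconvenient, the alternative is to establish $GZ_{m,n}\subseteq \mathcal{A}_m$ by induction on $m$, expressing each class sum of the wreath product $H_{m,n}=G^n\rtimes S_m$ as a polynomial in the $X_j$ and the class sums of $G^n$; this avoids eigenvalue computations but relocates the difficulty into the combinatorics of symmetric functions of the $X_j$.
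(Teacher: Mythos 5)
The paper does not prove this statement at all: it is imported verbatim from \cite[Theorem 4.4]{MS16}, so there is no in-paper argument to compare against. Judged on its own terms, your forward inclusion $\langle Z[\C[G^n]],X_1,\ldots,X_m\rangle\subseteq GZ_{m,n}$ is correct and complete: $Z[\C[G^n]]=\C[G^n]=Z_{1,n}$ since $H_{1,n}\cong G^n$ is abelian, and writing $X_j=C_j-C_{j-1}$ with $C_j$ the (full) class-sum-type element central in $\C[H_{j,n}]$ is exactly the right observation (it is the same telescoping that the paper later exploits in Lemma \ref{ka} and in defining $M_{l,r}=Z_{l,r}-Z_{l-\frac{1}{2},r}$).

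The genuine gap is the reverse inclusion, which is the entire content of the theorem, and your proposal does not close it. Your separation-plus-interpolation strategy is sound in outline (since $GZ_{m,n}$ is spanned by the rank-one projections onto the one-dimensional $V_T$, any commutative subalgebra that separates the $V_T$ must equal $GZ_{m,n}$), but the separation statement you need --- that the $G^n$-label together with the eigenvalues of $X_2,\ldots,X_m$ distinguishes all paths $T$ --- is precisely the spectral computation of Theorem \ref{gzt}, which in \cite{MS16} is derived \emph{downstream} of Theorem 4.4; indeed the paper itself states immediately after the theorem that the ``weight determines the $GZ$-subspace'' as a \emph{consequence} of it. So as written you have reduced the theorem to a claim of at least comparable difficulty, and citing the spectrum would be circular unless you recompute it independently from the classical representation theory of wreath products. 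The route that actually works (and is the one in \cite{MS16}, via its Theorem 4.3, which this paper already invokes for multiplicity-freeness) is your parenthetical alternative: show that the relative commutant $Z(\C[H_{j,n}],\C[H_{j-1,n}])$ is generated by $Z_{j-1,n}$, the $j$-th copy of $\C[G]$, and $X_j$; since $Z_{j,n}$ sits inside this commutant, induction on $j$ gives $Z_{j,n}\subseteq\langle Z[\C[G^n]],X_1,\ldots,X_j\rangle$ and hence $GZ_{m,n}=\langle Z_{1,n},\ldots,Z_{m,n}\rangle\subseteq\mathcal{A}_m$ with no spectral input. You should either carry out that centralizer argument or supply an independent computation of the action of $X_j$ on the $GZ$-basis before the interpolation step can be invoked.
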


A $GZ$-subspace $V$ of $H_{m,n}$ is isomorphic to $\rho_1\otimes \cdots \otimes
\rho_n,\;\rho_i\in G^{\wedge}$ for all $i$. We call $\rho=(\rho_1,\ldots
,\rho_n)$ the {\em label} of $V$.  And define the {\em weight} $\alpha(V)$ of $V$ by  
\begin{equation}
\alpha(V) = (\rho,\alpha_1,\ldots ,\alpha_m),
\end{equation}
where $\alpha_i = \mbox{eigenvalue of $X_i$ on $V$}$.
Using Lemma \ref{sc} and Theorem \ref{yjm3}, it can be easily shown that a $GZ$-subspace is uniquely determined by its weight.

Let $\Y$ be the set of all Young diagrams. The unique Young diagram with zero boxes is empty Young diagram denoted by $\emptyset$. For $\lambda \in \Y$, let $|\lambda|$ denote the number of boxes in $\La$. Define $$\Y(r,n) := \{\lambda = (\lambda_1, \lambda_2, \ldots, \lambda_r) \mid \La_i \in \Y \mbox{ for all }i =1,2, \ldots,r \mbox{ and } \sum\limits_{i=1}^{r} |\La_i|=n\}, $$ i.e., $\Y(r,n)$ is the set of $r$-tuples of Young diagrams such that the total number of boxes is $n$. The irreducible representations of $G(r,1,n)$ are parametrized by elements of the set $\Y(r,n)$.  

Let $\mu\in \Y$. A standard Young tableau of shape $\mu$ is obtained by filling the boxes in the Young diagram $\mu$ with the distinct numbers $1,2,\ldots ,| \mu |$ such that the numbers in the boxes strictly increase along each row and each column of $\mu$. For $\La \in \Y(r,n)$, a standard $r$-tuple of Young tableau of shape $\La$ obtained by filling the $n$-boxes in the $r$-tuple $\La$ with the distinct numbers $1,2,\ldots ,n$ such that the numbers in the boxes strictly increase along each row and each column of all Young diagrams occurring in $\La$. Define
$\Tab$ as the set of all standard $r$-tuple of Young tableau and set $\mbox{Tab}(r,n) := \cup_{\La \in \Y(r,n)}\Tab$.  

For each $i = 1,2, \ldots,r$, define the irreducible representation $\sigma_i$ of $G$:
\begin{align*}
 \sigma_i:~  G & \rightarrow \C^* \\
 \zeta & \mapsto \zeta^{i-1}.
\end{align*}
The irreducible representations of $G$ are $\sigma_1, \sigma_2, \ldots, \sigma_r$.

The content $c(b)$ of a box $b$ of a Young diagram is its $y$-coordinate $-$ its $x$-coordinate (We draw Young diagrams by following the convention of writing down matrices with $x$-axis running downwards and $y$-axis running to the right). Given $\La = (\La_1, \ldots, \La_r) \in \Y(r,n)$, $T \in \Tab$ and $1 \leq i \leq n$, the number $i$ resides in exactly one box of one of $\La_1, \ldots, \La_r$, let $b_T(i)$ be this box in Young diagram $\La_{j_i}$ for a unique $j_i \in \{1, \ldots, r\}$ and let $r_T(i) := \sigma_{j_i}$ . 
 
The following result for $G(r,1,n)$ can be easily seen from Theorem 6.5 in \cite{MS16}.
\begin{thm} \label{gzt}
Let $\La \in \Y(r,n)$. Then the GZ-subspaces of
$V^{\La}$ can be parametrized by $T \in \Tab$ and the GZ-decomposition of $V^{\La}$ can be written as
\begin{equation} \label{gzd} 
V^{\La} = \bigoplus\limits_{T \in \Tab} V_T,
\end{equation}
where each $V_T$ is closed under the action of $G^n$ and, as a $G^n$-module, is isomorphic to the irreducible
$G^n$-module
$$r_T(1)\otimes r_T(2)\otimes \cdots \otimes r_T(n)$$  
For $i=1,\ldots ,n$, the eigenvalue of $X_i$ on $V_T$ is given by
$rc(b_T(i))$.  
\end{thm}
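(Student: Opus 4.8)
The plan is to deduce the whole statement from the Okounkov--Vershik machinery recalled above, specialising \cite[Theorem 6.5]{MS16} to the present chain (\ref{mfic}) and to the content/$X_j$ conventions fixed here. First I would identify the groups in the chain and their irreducibles. Since an element of $H_{i,n}$ has its permutation part fixing $i+1, \ldots, n$, one has $H_{i,n} \cong G(r,1,i) \times G^{n-i}$, the first factor acting (with permutations) on the first $i$ coordinates and the second factor contributing characters on the remaining $n-i$ coordinates; these commute, so the product is direct. Consequently $H_{i,n}^{\wedge}$ is indexed by pairs consisting of an element of $\Y(r,i)$ together with a choice of characters $\sigma_{j_{i+1}}, \ldots, \sigma_{j_n}$ of $G$ on the last $n-i$ coordinates, and the only nontrivial branching as one passes from $H_{i,n}$ to $H_{i-1,n}$ is the restriction $G(r,1,i) \downarrow G(r,1,i-1)$ together with the splitting off of the $i$-th coordinate.

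Second, I would invoke the multiplicity-free branching rule for $G(r,1,i) \downarrow G(r,1,i-1)$: the irreducible labelled by $\mu \in \Y(r,i)$ restricts to the direct sum over all $\mu' \in \Y(r,i-1)$ obtained by deleting a single box from one of the $r$ Young diagrams of $\mu$, and the freed coordinate $i$ then carries the character $\sigma_{j_i}$ recording the component $j_i$ from which the box was removed. Reading this along the chain (\ref{mfic}) shows that a path $T = \lambda_1 \nearrow \cdots \nearrow \lambda_n$ as in (\ref{ch}) is exactly the data of, at each step $i$, adding the box containing $i$ to one of the $r$ components; such a path is precisely a standard $r$-tuple of Young tableaux of shape $\La$. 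This yields the parametrisation of the GZ-subspaces by $T \in \Tab$ and the decomposition (\ref{gzd}). Tracking the characters along the chain identifies $V_T$, as an $H_{1,n} \cong G^n$-module, with $r_T(1) \otimes \cdots \otimes r_T(n)$, since the box holding $i$ lies in the component $\La_{j_i}$ and hence contributes $\sigma_{j_i} = r_T(i)$ in the $i$-th coordinate.

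Third, for the eigenvalues of the Jucys--Murphy elements I would appeal directly to \cite[Theorem 6.5]{MS16}. The heuristic guiding the matching is that the factor $\sum_{l=0}^{r-1} \zeta_i^{l} \zeta_j^{-l}$ appearing in $X_j$ acts as $r$ times the projection onto configurations in which coordinates $i$ and $j$ carry the same character and vanishes otherwise; thus on $V_T$ the element $X_j$ behaves like $r$ times the sum of the transpositions $s_{ij}$ for which $b_T(i)$ and $b_T(j)$ lie in the same component of $\La$. This reduces the computation to the classical symmetric-group content formula, scaled by $r$, giving the eigenvalue $r\,c(b_T(i))$.

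The routine but delicate point is the bookkeeping of conventions: one must check that the content convention used here (the $y$-coordinate minus the $x$-coordinate) and the normalisation of $X_j$ match those of \cite{MS16}, so that the scalar comes out as exactly $r\,c(b_T(i))$ rather than a sign- or shift-variant, and that the label $\rho = (r_T(1), \ldots, r_T(n))$ read off from $T$ agrees with the character data transported along the chain. Since the statement is asserted to follow from \cite[Theorem 6.5]{MS16}, I expect the main content to be precisely this translation of notation rather than any genuinely new argument, so I do not anticipate a hard step beyond careful verification of these identifications.
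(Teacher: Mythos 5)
Your proposal is correct and matches the paper's treatment: the paper gives no independent argument for this theorem, stating only that it "can be easily seen from Theorem 6.5 in \cite{MS16}," which is exactly the reduction you carry out. Your unpacking of the chain $H_{i,n}\cong G(r,1,i)\times G^{n-i}$, the identification of Bratteli-diagram paths with standard $r$-tuples of tableaux, and the deferral to \cite{MS16} for the Jucys--Murphy eigenvalues is a faithful (and somewhat more explicit) version of what the paper intends.
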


Let $R$ denote the element of $\Tab$ defined as follows: for $\La = (\La_1, \ldots, \La_r)$, we start with $\La_1$ by filling the Young diagram $\La_1$ with the
numbers $1,\ldots ,|\La_1|$ in {\em row major order}, i.e., the first row is filled with $1,2,\ldots ,l_1$ in increasing order where $l_1$ is the length of the first row, the second row is filled with $l_1+1,\ldots ,l_1+l_2$ in increasing order where 
$l_2$ is the length of the second row and so on till the last row of
$\La_1$ has been filled. Then we fill the Young diagram $\La_2$ with $|\La_1|+1,\ldots ,|\La_1|+|\La_2|$ in row major order and so on till the last Young diagram $\La_r$.

The irreducible representations of $G(r,1,n)$ are parametrized by $r$-tuple of Young diagrams in $\Y(r,n)$ and given $\La \in \Y(r,n)$, the $GZ$-basis elements (and hence, $GZ$-subspaces) of $V^{\La}$  are parametrized by $T \in \Tab$.

\subsection{Clifford Theory}\label{ct}
We give an outline of Clifford theory for a finite group $H$ and its normal subgroup $N$ such that $H/N$ is a cyclic group of order $p$ as done in \cite{S89, MY98, BB07}. 
The pair $H$ and $N$ on which they have applied Clifford theory is the pair $G(r,1,n)$ and $\Grpn$.
The group $\Grpn$ can be considered as the subgroup of $GL_n(\C)$ consisting of generalized permutation matrices such that the $m$-th power of the product of nonzero entries is one. We discuss the complex reflection group $\Grpn$ and its representation theory in detail in Section \ref{crg} and review Clifford theory for the rest of this section.

Let $H^{\wedge}$ denote the indexing set of irreducible representations of $H$. 
Identifying $H/N$ with the group $C$ consisting of one-dimensional representations of $H$ which contain $N$ in their kernel, we can define an action of $C$ on the set of irreducible representations of $H$ by
\begin{align*}
V^\rho \mapsto \delta \otimes V^\rho
\end{align*}
where $\delta \in C$ and $V^\rho$ is the irreducible representation of $H$ indexed by $\rho \in H^{\wedge}.$ Denote the orbit of $V^{\rho}$ by $[\rho]$ with respect to the action of $C$. The irreducible representations of $H$ which are in the same orbit are called associates of each other. Assume that $V^\rho$ has $b(\rho)$ associates. Then  the stabilizer subgroup of $C$ with respect to $V^\rho$, denoted by $C_{\rho}$, has the order $u(\rho) = \frac{p}{b(\rho)}$. Suppose that $\delta_0$ is a generator of $C_{\rho}$. It is easy to see that there exists a $N$-linear map $A : V^\rho \longrightarrow V^\rho$ such that $A(hv)= \delta_0(h)hA(v)$ for all $h \in H$ and $v \in V^\rho$. Then by Schur's lemma, the $H$-linear map $A^{u(\rho)}$ acts by a nonzero scalar. Normalizing the scalar, we obtain that $A^{u(\rho)}$ is the identity map on $V^\rho$. Such an $A$ is called the {\em associator} of $V^{\rho}$. Also, if $\mu \in [\rho]$, then the stabilizer subgroup $C_\mu = C_\rho.$ The following theorem gives parametrization of irreducible $N$-modules.

\begin{thm}\phantomsection\label{cd}
\begin{enumerate}[(a)]
\item The eigenspace decomposition of $V^\rho$ with respect to $A$ is given by
 \begin{equation}\label{esdec}
  V^\rho \cong \bigoplus_{l=0}^{u(\rho)-1} E^{(l)},
 \end{equation}
where $E^{(l)}$ is the eigenspace with eigenvalue $e^{\frac{2\pi i l}{u(\rho)}}$. The group $C_\rho$ can be identified with $\{e^{\frac{2\pi i l}{u(\rho)}} \mid l = 0,1, \ldots, u(\rho)-1\}.$
\item The eigenspaces $E^{(l)}$, for $0 \leq l \leq u(\rho)-1$, occuring in (\ref{esdec}) are inequivalent irreducible $N$-modules, each of which is of dimension $\dim(V^\rho)/u(\rho)$.   
\item For any $0 \leq l \leq u(\rho)-1$, we have $$\mbox{Ind}_N^H (E^{(l)}) = \bigoplus\limits_{\mu \in [\rho]} V^\mu.$$ 
\item Let $\mathcal{O}$ denote the set of all orbits in $H^{\wedge}$.
 The irreducible $N$-modules are parametrized by the pairs $([\rho], \epsilon)$ where $[\rho] \in \mathcal{O}$ and $\epsilon \in C_\rho$.
\end{enumerate}
\end{thm}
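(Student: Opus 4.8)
The plan is to treat the associator $A$ as the main tool and establish the four parts in sequence, the crux being a single character computation. For part (a), I would note that since $A$ is normalized so that $A^{u(\rho)}$ is the identity, its minimal polynomial divides $x^{u(\rho)} - 1$, a polynomial with distinct roots; hence $A$ is diagonalizable and all its eigenvalues are $u(\rho)$-th roots of unity, which gives the eigenspace decomposition $V^\rho \cong \bigoplus_{l=0}^{u(\rho)-1} E^{(l)}$ with $E^{(l)}$ the eigenspace for $e^{\frac{2\pi i l}{u(\rho)}}$. Since $\delta_0$ is trivial on $N$, the defining relation $A(hv) = \delta_0(h) h A(v)$ specializes to $A(nv) = n A(v)$ for $n \in N$, so $A$ is $N$-linear and each $E^{(l)}$ is an $N$-submodule.

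To handle the dimension claim in part (b), I would choose $h_0 \in H$ with $\delta_0(h_0) = \omega := e^{\frac{2\pi i}{u(\rho)}}$; this is possible because $\delta_0$ has order $u(\rho)$ in $C$, so its image is precisely the group of $u(\rho)$-th roots of unity. For $v \in E^{(l)}$ the relation gives $A(h_0 v) = \delta_0(h_0) h_0 A(v) = \omega^{l+1}(h_0 v)$, so $h_0 E^{(l)} = E^{(l+1)}$ with indices read modulo $u(\rho)$; as $h_0$ is invertible this forces all eigenspaces to have the common dimension $\dim(V^\rho)/u(\rho)$, and in particular none is zero. The irreducibility and pairwise inequivalence of the $E^{(l)}$ is then the heart of part (b), and I would obtain both at once from the identity
$$\sum_{l,l'} \langle \chi_{E^{(l)}}, \chi_{E^{(l')}} \rangle_N = \langle \mathrm{Res}_N \chi_\rho, \mathrm{Res}_N \chi_\rho \rangle_N = \langle \chi_\rho, \mathrm{Ind}_N^H \mathrm{Res}_N \chi_\rho \rangle_H.$$
Using the projection formula $\mathrm{Ind}_N^H \mathrm{Res}_N V^\rho \cong V^\rho \otimes \C[H/N] \cong \bigoplus_{\delta \in C} (\delta \otimes V^\rho)$, the right-hand side equals $\#\{\delta \in C : \delta \otimes V^\rho \cong V^\rho\} = |C_\rho| = u(\rho)$. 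Since the sum has $u(\rho)^2$ nonnegative integer terms, its $u(\rho)$ diagonal terms are each at least $1$ (the $E^{(l)}$ being nonzero), and the total is $u(\rho)$, every diagonal term must be exactly $1$ and every off-diagonal term must vanish; that is, each $E^{(l)}$ is irreducible and they are mutually non-isomorphic, and the dimension statement follows from the previous step.

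For part (c), Frobenius reciprocity gives $\langle \mathrm{Ind}_N^H \chi_{E^{(l)}}, \chi_\mu \rangle_H = \langle \chi_{E^{(l)}}, \mathrm{Res}_N \chi_\mu \rangle_N$ for each $\mu \in H^\wedge$; when $\mu \in [\rho]$ we have $V^\mu \cong \delta \otimes V^\rho$ for some $\delta \in C$ and hence $\mathrm{Res}_N V^\mu = \mathrm{Res}_N V^\rho = \bigoplus_l E^{(l)}$, so the multiplicity is $1$ by the orthonormality just proved. A dimension count, $\dim \mathrm{Ind}_N^H E^{(l)} = p \cdot \dim(V^\rho)/u(\rho) = b(\rho)\dim V^\rho = \sum_{\mu \in [\rho]} \dim V^\mu$, shows that the $V^\mu$ with $\mu \in [\rho]$ already exhaust the induced module, giving $\mathrm{Ind}_N^H E^{(l)} = \bigoplus_{\mu \in [\rho]} V^\mu$.

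Finally, for part (d), Clifford's theorem shows that any irreducible $N$-module $W$ is a constituent of $\mathrm{Res}_N V^\rho$ for a suitable $\rho$ (choose any $H$-irreducible constituent of $\mathrm{Ind}_N^H W$), hence $W$ is one of the $E^{(l)}$ and is labelled by $([\rho], \epsilon)$ with $\epsilon \in C_\rho$ via the identification in part (a); part (c) recovers the orbit $[\rho]$ as the set of $H$-constituents of $\mathrm{Ind}_N^H W$, so the orbit is an invariant of $W$, distinct orbits yield disjoint families, and within a single orbit the $u(\rho)$ eigenspaces are distinct by part (b), yielding the asserted bijection. I expect the main obstacle to be the character identity in part (b), specifically the reduction to $|C_\rho|$ via the projection formula; once that count is in hand, the forcing argument and the remaining parts are essentially bookkeeping. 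One subtlety worth flagging in part (d) is that the associator, and therefore the precise matching of the $E^{(l)}$ with elements $\epsilon \in C_\rho$, is canonical only up to multiplication of $A$ by a $u(\rho)$-th root of unity, so the parametrization should be understood as a bijection after fixing such a normalization; this does not affect the count of modules per orbit.
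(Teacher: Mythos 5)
Your proof is correct. Note, however, that the paper does not actually prove Theorem \ref{cd}: it is stated as a summary of Clifford theory for a cyclic quotient $H/N$ of order $p$ and the proof is deferred to the cited references (Stembridge and the papers of Morita--Yamada et al.), where it is extracted from the general Clifford/Mackey machinery. Your argument is therefore a genuinely self-contained alternative, and a clean one: everything is made to hinge on the single count
\[
\sum_{l,l'}\langle \chi_{E^{(l)}},\chi_{E^{(l')}}\rangle_N=\langle \chi_\rho,\operatorname{Ind}_N^H\operatorname{Res}_N\chi_\rho\rangle_H=\#\{\delta\in C:\delta\otimes V^\rho\cong V^\rho\}=u(\rho),
\]
obtained from the projection formula, after which irreducibility and pairwise inequivalence of the eigenspaces drop out simultaneously by a pigeonhole forcing. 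The preliminary step $h_0E^{(l)}=E^{(l+1)}$ (using $\delta_0(h_0)=e^{2\pi i/u(\rho)}$, which exists because $\delta_0$ has order $u(\rho)$ as a character) is exactly what is needed to guarantee all $u(\rho)$ eigenspaces are nonzero and of equal dimension, so the diagonal terms in the count are each at least $1$; without it the forcing argument would not close. Parts (c) and (d) then follow from Frobenius reciprocity plus dimension counts as you say, and your closing caveat --- that the labelling of the $E^{(l)}$ by elements of $C_\rho$ is canonical only after fixing the normalization of $A$ up to a $u(\rho)$-th root of unity --- is a real subtlety that the paper's statement glosses over. What your route buys is brevity and transparency for this special (cyclic-quotient) case; what the references' route buys is the general statement for arbitrary normal subgroups, of which this is a corollary.
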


\section{Tanabe algebra}\label{ta}
The partition monoid is a poset with the partial order given as: for $d, d' \in A_k$, $d' \leq d$ if $d'$ is coarser than $d$, i.e., if $i$ and $j$ are in the same block of $d$, then $i$ and $j$ are in the same block of $d'$.
For $d \in A_k$, define the unique element $x_d \in \C A_k(n)$ satisfying
\begin{equation}\label{xd}
 d = \sum\limits_{d' \leq d} x_{d'}.
\end{equation}
This partial order on $A_k$ can be extended to a total order on $A_k$. 
It can be easily seen that the transition matrix between $\{d \mid d \in A_k \}$ and  $\{x_d \mid d \in A_k \}$ is an upper triangular matrix with $1'$s on the diagonal and thus, $\{x_d \mid d \in A_k \}$ is also a basis of the partition algebra $\C A_k(n)$, see also \cite[p. 879]{HR05}. 

An {\em internal block} in $ d_1 \circ d_2$, for $d_1, d_2 \in A_k$, is a block that is entirely in the middle while computing $d_1 \circ d_2$. We say that {\em the bottom row of $d_1$ matches with the top row of $d_2$} if the following condition is satisfied: $i'$ and $j'$ are in the same block in $d_1$ if and only if $i$ and $j$ are in the same block in $d_2$ for $1 \leq i, j \leq k$. For every $s$ in a block $B$ of $d \in A_k$, if we put $i_s = t$ for some $ 1 \leq t \leq n$, then $t$ is said to be a {\em mark of the block} $B$. The next lemma and the idea of its proof are from the online notes \cite{Ram2010S}. It gives the structure constants with respect to the basis $\{x_d \mid d \in A_k\}$ of $\C A_k(n)$.  

\begin{lemma}\label{mult}
 For $d_1, d_2 \in A_k$, the multiplication of $x_{d_1}$ and $x_{d_2}$ in $\C A_k(n)$ is given by 
\begin{align*}
  x_{d_1}x_{d_2} = 
  \begin{cases}
   \sum\limits_{d \in A_k} c_d x_d, & \mbox{if the bottom row of } d_1 \mbox{ matches with the top row of } d_2, \\
   0, & \mbox{otherwise,}
  \end{cases}
\end{align*}
where the sum is taken over all those $d$ in $A_k$ such that $d$ is coarser than $d_1 \circ d_2$ and the coarsening is done by connecting a block of $d_1$ which is contained entirely in the top row of $d_1$ with a block of $d_2$ which is contained entirely in the bottom row of $d_2$ and 
\begin{equation*}
c_d = (n - |d|)_{[d_1 \circ d_2]}, 
\end{equation*}
where $|d|$ is the number of blocks in $d$, $[d_1 \circ d_2]$ is the number of internal blocks in $[d_1 \circ d_2]$, and for $a \in \Z$, $b \in \Z_{\geq 0},$
\begin{align*}
  (a)_b := 
  \begin{cases}
   a(a-1) \cdots (a-b+1), & \mbox{if }  b > 0,\\
   1, & \mbox{if } b = 0, 
  \end{cases}
\end{align*}
such that when $a,b \in \Z_{\geq 0}$ and $a \geq b$, we have $(a)_b = \Myperm[a]{b}$, the number of permutations of $a$ objects taken $b$ at a time.
\end{lemma}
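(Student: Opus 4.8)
The plan is to work directly with the defining relation $d = \sum_{d' \leq d} x_{d'}$ from \eqref{xd} and transport the known multiplication rule for the diagram basis $\{d \mid d \in A_k\}$ of $\C A_k(n)$ into the $\{x_d\}$-basis by M\"obius inversion over the partition lattice. First I would record the multiplication in the diagram basis: for $d_1, d_2 \in A_k$ one has $d_1 d_2 = n^{l(d_1,d_2)} (d_1 \circ d_2)$, where $l(d_1,d_2)$ is the number of internal blocks removed from the middle. Substituting $d_i = \sum_{e_i \leq d_i} x_{e_i}$ and $d_1 \circ d_2 = \sum_{f \leq d_1 \circ d_2} x_f$ turns the identity $(\sum_{e_1 \leq d_1} x_{e_1})(\sum_{e_2 \leq d_2} x_{e_2}) = n^{l(d_1,d_2)} \sum_{f \leq d_1\circ d_2} x_f$ into a linear system which, after M\"obius inversion, should isolate the single product $x_{d_1} x_{d_2}$ as a linear combination of the $x_d$.

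The key combinatorial step is to interpret the coefficient $c_d$ representation-theoretically rather than through raw inversion. I would use the standard realization of $x_d$ as the operator on $V^{\otimes k}$ that sums over all labelings $\mathpzc{i}_1, \ldots, \mathpzc{i}_k, \mathpzc{i}_{1'}, \ldots, \mathpzc{i}_{k'} \in \{1, \ldots, n\}$ which are \emph{exactly} constant on the blocks of $d$ and take \emph{distinct} values on distinct blocks (the "all marks distinct" condition), as opposed to $d$ itself, which only requires constancy on blocks. In this picture the matching condition --- $i'$ and $j'$ lie in the same block of $d_1$ iff $i$ and $j$ lie in the same block of $d_2$ --- is precisely the condition under which a nonzero common labeling of the identified middle row exists; if the block structures on the middle row are incompatible, the product of the two "distinct-marks" tensors is zero, giving the $0$ case. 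When they do match, composing the two operators forces the middle indices to be determined, and the surviving sum runs over labelings whose block/distinctness pattern coarsens $d_1 \circ d_2$ by possibly fusing a purely-top block of $d_1$ with a purely-bottom block of $d_2$; this is exactly the index set over which $d$ ranges in the statement.

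The heart of the count is then the factor $c_d = (n - |d|)_{[d_1 \circ d_2]}$. I would argue that once the labels on the blocks of the coarsened diagram $d$ are fixed (using $|d|$ distinct values), the internal blocks in the middle must receive values \emph{distinct from each other and from all the $|d|$ already-used marks}, because the $x$-basis enforces all marks distinct; hence there are $(n - |d|)(n-|d|-1)\cdots(n-|d| - [d_1\circ d_2] + 1) = (n-|d|)_{[d_1\circ d_2]}$ admissible choices of middle labels, each contributing $1$ to the coefficient of $x_d$. This is where the falling factorial, and in particular the appearance of $[d_1 \circ d_2]$ as the number of internal blocks, comes out naturally.

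The step I expect to be the main obstacle is making the "coarsening is done by connecting a block of $d_1$ contained entirely in the top row with a block of $d_2$ contained entirely in the bottom row" precise and verifying that these are exactly the coarsenings $d \geq d_1 \circ d_2$ that arise with nonzero coefficient --- i.e.\ that no other fusions occur and that each allowable fusion genuinely contributes. Concretely I would need to check that blocks touching the middle row are already rigidly determined by the matching condition and cannot fuse further, so that the only freedom is pairing an isolated top block of $d_1$ with an isolated bottom block of $d_2$; and conversely that every such pairing does produce a term, with the stated $c_d$. I would handle this by carefully tracking which blocks of $d_1 \circ d_2$ meet the top row, the bottom row, or the (removed) middle, and confirming that the distinctness constraints in the $x$-basis neither over- nor under-count. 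Once this bookkeeping is in place, assembling the two cases into the displayed formula is routine, and I would close by noting consistency with the normalization $(a)_0 = 1$ when there are no internal blocks.
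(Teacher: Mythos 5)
Your core computation is the same as the paper's: both realize $x_d$ via $\phi_k$ as the operator on $\Vk$ enforcing the ``all marks distinct'' condition (\ref{xdcoeff}), both derive the vanishing from incompatibility of the middle-row block structures, and both obtain $c_d = (n-|d|)_{[d_1\circ d_2]}$ by counting the ways to assign distinct marks to the internal blocks after the $|d|$ blocks of $d$ have been marked. That part of your argument is sound and matches the paper.

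The gap is that this argument only proves the identity inside $\mbox{End}(\Vk)$, i.e.\ it shows $\phi_k(x_{d_1})\phi_k(x_{d_2}) = \sum_d c_d\,\phi_k(x_d)$. This pins down the product $x_{d_1}x_{d_2}$ in the abstract algebra $\C A_k(n)$ only when $\phi_k$ is injective, which by Theorem \ref{oswd} requires $n \geq 2k$; for $n < 2k$ the kernel of $\phi_k$ is spanned by the $x_d$ with more than $n$ blocks, so the operator identity does not determine the structure constants. The lemma is needed for small $n$ elsewhere in the paper (e.g.\ Example \ref{k2} with $\T_2(2,2,2)$, and the polynomiality of multiplication invoked in Theorem \ref{zkr}(b)), so this case cannot be dropped. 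The paper closes it by fixing $k$, $d_1$, $d_2$, $d$ and observing that the coefficient of $x_d$ in $x_{d_1}x_{d_2}$ is a polynomial $f_d(n)$ in $n$ which agrees with $(n-|d|)_{[d_1\circ d_2]}$ for all $n \geq 2k$, hence for all $n$. Your opening suggestion of a purely algebraic M\"obius inversion from the diagram-basis product $d_1 d_2 = n^{l}\, d_1\circ d_2$ would in principle give a proof valid for every $n$, but you explicitly set it aside in favor of the representation-theoretic count, so as written the proposal needs either that inversion carried through or the paper's polynomial-interpolation step appended.
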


\begin{proof}
 Let $n \geq 2k$. Then $\phi_k: \C A_k(n) \cong \mbox{End}_{S_n}(\Vk)$ (by Schur-Weyl duality for partition algebras, Theorem \ref{oswd}). Identifying $x_d$ with $\phi_k(x_d)$, we have 
 \begin{align*}
 & (\vi) (x_{d_1}x_{d_2})  = \\
 &\sum\limits_{i_{1'}, i_{2'}, \ldots, i_{k'},i_{1''}, i_{2''}, \ldots, i_{k''}}(v_{i_{1''}} \otimes v_{i_{2''}} \otimes \cdots \otimes v_{i_{k''}})(x_{d_1})^{i_1,i_2, \ldots, i_k}_{i_{1'}, i_{2'}, \ldots, i_{k'}}(x_{d_2})^{i_{1'}, i_{2'}, \ldots, i_{k'}}_{i_{1''}, i_{2''}, \ldots, i_{k''}}
 \end{align*}
 If the bottom row of $d_1$ does not match with the top row of $d_2$, then using (\ref{xdcoeff}) it can be seen that $x_{d_1}x_{d_2} = 0$.  

 If the bottom row of $d_1$ matches with the top row of $d_2$, then again using  (\ref{xdcoeff}) we have 
 \begin{align*}
  \sum\limits_{i_{1'}, i_{2'}, \ldots, i_{k'},i_{1''}, i_{2''}, \ldots, i_{k''}} & (v_{i_{1''}} \otimes v_{i_{2''}} \otimes \cdots \otimes v_{i_{k''}})(x_{d_1})^{i_1,i_2, \ldots, i_k}_{i_{1'}, i_{2'}, \ldots, i_{k'}}(x_{d_2})^{i_{1'}, i_{2'}, \ldots, i_{k'}}_{i_{1''}, i_{2''}, \ldots, i_{k''}}  = \sum\limits_{d}\alpha_d x_d,
 \end{align*}
where $\alpha_d$ is some positive integer and the sum is over all $d$ obtained by coarsening $d_1 \circ d_2$ which is done by connecting a block of $d_1$ contained entirely in the top row of $d_1$ and a block of $d_2$ contained entirely in the bottom row of $d_2$. So, $ \alpha_d = $ number of ways the internal blocks of $d_1 \circ d_2$ can be marked distinctly after putting distinct marks on the blocks of $d$ $= (n - |d|)_{[d_1 \circ d_2]} = c_d.$

Fix k and vary n. For a given $n$, fix $d_{1}, d_{2},d\in A_{k}(n)$. Then the coefficient of $x_{d}$ in the product $x_{d_{1}}x_{d_{2}}$ is a polynomial $f_{d}(n)$ in $n$. Then by above arguments, for $n \geq 2k$, we have  $f_{d}(n)=(n-|d|)_{[d_{1}\circ d_{2}]}$. The fundamental theorem of algebra implies that $f_{d}(n)=(n-|d|)_{[d_{1}\circ d_{2}]}$ for all $n$.
\end{proof}

Let $B$ be a block of $d \in A_k$. Suppose that $N(B)$ is the number of elements in $B \bigcap \{1,2,\ldots, k\}$ and  $M(B)$ is the number of elements in $B \bigcap \{1',2',\ldots, k'\}$. Thus, $N(B)$ and $M(B)$ are the number of elements in the block $B$ in top row and bottom row of $d$ respectively. 

Define the following mutually disjoint subsets of $A_k$:
\begin{align*}
& \Pi_k(r) := \{ d =  (B_1, B_2, \ldots, B_s) \in A_k \mid s \geq 1 \mbox{ and } \\ & ~~~~~~~~~~~~~~~N(B_i) \equiv M(B_i)~(\mbox{mod }r) (1 \leq i \leq s)\}, \\
& \Lambda_k(r,p,n) := \{  d = (B_1, B_2, \ldots, B_n) \in A_k  \mid  
N(B_i) \equiv M(B_i)~(\mbox{mod }m),\\  & ~~~~~~~~~~~~~~~~~~~N(B_i) \not\equiv M(B_i)~(\mbox{mod }r), (1 \leq i \leq n), \mbox{ and } \\ & ~~~~~~~~~~~~~~~~~~~N(B_i) - M(B_i) \equiv N(B_j)-M(B_j)~(\mbox{mod } r), (1 \leq i, j \leq n)\}, \\
& \Theta_k(r,p,n) := \{  d = (B_1, B_2, \ldots, B_y) \in A_k  \mid  y > n, \\ 
& ~~~~~~~~~~~~~~~~~~~ N(B_i) \equiv M(B_i)~(\mbox{mod }m),(1 \leq i \leq y), \\
& ~~~~~~~~~~~~~~~~~~~ \mbox{  and for some } j \in \{1,\ldots,y\}, N(B_j) \not\equiv M(B_j)~(\mbox{mod }r)  \}.
\end{align*}
Also, define $A_k(r,p,n)$, a subset of $A_k$, by setting
\begin{equation*}
 A_k(r,p,n) := \Pi_k(r) \bigcup \Lambda_k(r,p,n) \bigcup \Theta_k(r,p,n).
\end{equation*}

\begin{definition}
Define $\Tn:= \C$-span$\{x_d \mid d \in A_k(r,p,n)\}$, a subspace of partition algebra $\C A_k(n)$.
\end{definition}
\begin{remark}\label{sm}
The set $\Pi_k(r)$ is a submonoid of $A_k$ and $A_k(r,1,n) = \Pi_k(r)$. Also, for $d \in \Pi_k(r)$, the elements $d' \leq d$ also belong to $\Pi_k(r)$ because the difference in the number of elements between top row and bottom row in each block remains $0~(\mbox{mod }r)$ even after coarsening. Thus, 
\begin{equation*}
 \T_k(r,1,n) = \C\mbox{-}span\{x_d \mid d \in \Pi_k(r) \} = \C\mbox{-}span\{d \in \Pi_k(r)\} 
\end{equation*}
is a subalgebra of $\C A_k(n)$.
\end{remark}

Let $V = \C^n$ be the $n$-dimensional vector space on which $GL_n(\C)$ acts naturally. The action of $\Grpn$ on $V$ is given by the restriction of the action of $GL_n(\C)$ on $V$. Also, $\Grpn$ acts on the $k$-fold tensor product $\Vk$ by the diagonal action.
The proof of the following theorem uses the basis of the centralizer algebra of the action of $\Grpn$ on $\Vk$ as given in Lemma \ref{ca}(a) (also, \cite[Lemma 2.1]{T97}).

\begin{thm}
 The vector space $\Tn$ is a subalgebra of $\C A_{k}(n)$.
\end{thm}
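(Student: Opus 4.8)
The plan is to show that $\Tn$ is closed under multiplication by verifying that, for any $d_1,d_2\in A_k(r,p,n)$, every diagram $d$ occurring with nonzero coefficient in the product $x_{d_1}x_{d_2}$ (as computed by Lemma \ref{mult}) again lies in $A_k(r,p,n)$. The organizing observation is that all three constituent sets share one feature: every $d\in A_k(r,p,n)$ has each block $B$ satisfying $N(B)\equiv M(B)\ (\mbox{mod } m)$. This is immediate from the definitions of $\Lambda_k(r,p,n)$ and $\Theta_k(r,p,n)$, and holds for $\Pi_k(r)$ because $m\mid r$. So I would first reduce closure to controlling this single congruence through composition and coarsening, and then use representation theory to pin down the finer mod-$r$ conditions.

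The combinatorial heart is the claim that if every block of $d_1$ and of $d_2$ satisfies $N\equiv M\ (\mbox{mod } m)$, then so does every block of every $d$ appearing in $x_{d_1}x_{d_2}$. First I would prove this for $d_1\circ d_2$ by a conservation argument on the stacked three-row diagram with top row $T$ of $d_1$, the shared middle row, and bottom row of $d_2$: each connected component $C$ is simultaneously a union of whole blocks of $d_1$ and of whole blocks of $d_2$, so summing $N-M\equiv 0\ (\mbox{mod } m)$ over the $d_1$-blocks in $C$ gives $\tau(C)\equiv\mu(C)\ (\mbox{mod } m)$ (top versus middle counts), and the same sum over the $d_2$-blocks gives $\mu(C)\equiv\beta(C)\ (\mbox{mod } m)$; hence the surviving block of $d_1\circ d_2$ satisfies $N\equiv M\ (\mbox{mod } m)$. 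For the coarsening in Lemma \ref{mult}, note it only merges a block lying entirely in the top row of $d_1$ (which has $M=0$, hence $N\equiv 0\ (\mbox{mod } m)$) with a block lying entirely in the bottom row of $d_2$ (which has $N=0$, hence $M\equiv 0\ (\mbox{mod } m)$), so any merged block still has $N\equiv M\equiv 0\ (\mbox{mod } m)$.

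Next I would split $x_{d_1}x_{d_2}=\sum_d c_d x_d$ according to whether $|d|>n$ or $|d|\le n$. For the terms with $|d|>n$, the combinatorial lemma gives $N(B)\equiv M(B)\ (\mbox{mod } m)$ for every block; if moreover every block satisfies $N\equiv M\ (\mbox{mod } r)$ then $d\in\Pi_k(r)$, and otherwise $d\in\Theta_k(r,p,n)$, so in either case $d\in A_k(r,p,n)$. For the terms with $|d|\le n$ I would invoke Schur--Weyl duality: the surjection $\phi_k:\C A_k(n)\to \mbox{End}_{S_n}(\Vk)$ of Theorem \ref{oswd} has kernel spanned by $\{x_d : |d|>n\}$, so it is injective on the span of $\{x_d : |d|\le n\}$ and carries this set to a basis of $\mbox{End}_{S_n}(\Vk)$; by Lemma \ref{ca}(a) the subset indexed by $A_k(r,p,n)$ is precisely a basis of the subalgebra $\mbox{End}_{\Grpn}(\Vk)$. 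Since $\phi_k(x_{d_1}),\phi_k(x_{d_2})\in \mbox{End}_{\Grpn}(\Vk)$, so does $\phi_k(x_{d_1}x_{d_2})=\sum_{|d|\le n}c_d\,\phi_k(x_d)$, and comparing coefficients against the $\mbox{End}_{S_n}(\Vk)$-basis forces $c_d=0$ whenever $|d|\le n$ and $d\notin A_k(r,p,n)$. Combining the two ranges yields $x_{d_1}x_{d_2}\in\Tn$.

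I expect the main obstacle to be exactly the terms with $|d|>n$: these lie in $\ker\phi_k$, so the clean representation-theoretic argument handling the low-rank part is silent about them, and they must be controlled purely combinatorially. Thus the delicate step is the conservation lemma, and in particular verifying that \emph{both} the composition $\circ$ and the specific coarsening of Lemma \ref{mult} preserve the congruence $N\equiv M\ (\mbox{mod } m)$ block by block; once that is in place, the remainder is bookkeeping resting on Lemma \ref{ca}(a) and the known description of $\ker\phi_k$. (Note that for $n$ large the sets $\Lambda_k(r,p,n)$ and $\Theta_k(r,p,n)$ are empty and $A_k(r,p,n)=\Pi_k(r)$, recovering the submonoid case of Remark \ref{sm}, so the genuinely new content is for small $n$, which is precisely where this two-part argument is needed.)
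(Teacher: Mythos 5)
Your proof is correct, and it uses the same ingredients as the paper's (Lemma \ref{mult}, the mod-$m$ ``conservation'' computation for $d_1\circ d_2$ and its coarsenings, Theorem \ref{oswd}, and Lemma \ref{ca}(a)), but it organizes them by a different decomposition: you split the output $x_{d_1}x_{d_2}=\sum_d c_dx_d$ according to whether $|d|>n$ (handled purely combinatorially, since every element of $A_k(r,p,n)$ has all blocks satisfying $N(B)\equiv M(B)\ (\mbox{mod }m)$ and this congruence is preserved by composition and by the coarsenings of Lemma \ref{mult}) or $|d|\le n$ (handled purely representation-theoretically, since $\phi_k(x_{d_1}x_{d_2})\in\mbox{End}_{\Grpn}(\Vk)$ and comparison against the basis of Lemma \ref{ca}(a) inside the basis $\{\phi_k(x_d):|d|\le n\}$ of $\mbox{End}_{S_n}(\Vk)$ kills the unwanted coefficients). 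The paper instead runs a case analysis on where $d_1$ and $d_2$ live among $\Pi_k(r)$, $\Lambda_k(r,p,n)$, $\Theta_k(r,p,n)$, applying essentially your two arguments case by case. Your uniform treatment buys two things: it avoids repeating the mod-$m$ argument in several cases, and it automatically covers combinations (e.g.\ $d_1,d_2$ both in $\Lambda_k(r,p,n)$) that the paper's explicit case list does not spell out, though the paper's case-(iii) argument would apply to them verbatim. Your statement of the conservation lemma at the level of connected components of the three-row diagram is also slightly more careful than the paper's block-by-block phrasing. One cosmetic imprecision: the basis of $\mbox{End}_{\Grpn}(\Vk)$ in Lemma \ref{ca}(a) is indexed by $\Pi_k(r,p,n)$, which equals $A_k(r,p,n)\cap\{d:|d|\le n\}$ rather than $A_k(r,p,n)$ itself; in context this is clearly what you mean, and the argument is unaffected.
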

\begin{proof}
Let $d_{1}, d_{2}\in A_{k}(r,p,n)$. It is sufficient to assume that the bottom row of $d_{1}$ matches with the top row of $d_{2}$.
The multiplication $x_{d_{1}}x_{d_{2}}$ is given by 
\begin{equation}\label{stc}
x_{d_{1}}x_{d_{2}}=\sum\limits_{d \in A_k} c_{d} x_{d}
\end{equation} 
{\it Case (i)}: If $d_{1}, d_{2}\in \Pi_{k}(r)$, then by Remark \ref{sm}, we have  $x_{d_{1}}x_{d_{2}} \in \T_k(r,1,n) \subseteq \Tn$.

\noindent{\it Case (ii)}: One of $d_1$ or $d_2$ is in $\Theta_k(r,p,n)$. Without loss of generality, assume that  $d_{1}\in\Theta_k(r,p,n)$ and $d_{2}\in A_{k}(r,p,n)$. {\em Claim}:  $c_{d} = 0 \mbox{ for } d \notin A_{k}(r,p,n)$ in (\ref{stc}). Since $d_{1}$ has more than 
$n$ blocks, therefore using Schur-Weyl duality for partition algebra (Theorem \ref{oswd}), we get, in (\ref{stc}) 
$$ \sum_{\substack{d \in A_k, \\ |d| \leq n}} c_d \phi_k(x_d) = 0. $$
The linear independence of $\{\phi_k(x_d) \mid |d| \leq n \}$ implies that $c_d = 0$ for $d \in A_k, |d| \leq n.$ Thus, $c_d$ can be nonzero only when $|d| > n$. For such $d$, we show that either $d \in \Pi_{k}(r)$ or $d\in \Theta_{k}(r,p,n)$. 
Suppose $d\notin \Pi_{k}(r)$, then there exists $1 \leq j\leq |d| $ such that $N(B_{j})\not\equiv M(B_{j})({\mbox{mod }r)}$.

{\em Subcase (a)}: 
Suppose $d=d_{1}\circ d_{2}$.
If a block $B$ in $d_{1}$ is connected with a block $B'$ in $d_{2}$ then $N(B)\equiv M(B)({\mbox {mod }} m)$, $N(B')\equiv M(B')({\mbox {mod }} m)$ and $M(B)=N(B')$. Thus, $N(B)\equiv M(B')({\mbox {mod }} m)$ and $d \in \Theta_k(r,p,n)$. This also includes the cases when either of $B$ and $B'$ are entirely in the top or bottom row of $d_1$ and $d_2$ respectively.

{\em Subcase (b)}: Suppose that $d$ is obtained by coarsening of $d_1 \circ d_2$ as in Lemma \ref{mult}. Let the coarsening be done by connecting a block $B$ entirely in the top row of $d_{1}$ with a block $B'$ entirely in the bottom row of $d_{2}$. Then $$N(B)\equiv 0({\mbox{mod }} m) \mbox{ and } 0 \equiv M(B')(\mbox{mod } m).$$ Thus, $N(B)\equiv M(B')(\mbox{mod }m)$ and $d \in \Theta_k(r,p,n)$.

\noindent{\it Case (iii)}: One of $d_1$ and $d_2$ is in $\Pi_{k}(r)$ and the other is in $ \Lambda_{k}(r,p,n)$.  Without loss of generality, assume that $d_{1}\in \Pi_{k}(r)$ and $d_{2}\in \Lambda_{k}(r,p,n)$. If $|d_1| > n$, then we can argue similar to the case (ii) above. So, assume that $|d_{1}| \leq n$. From (\ref{stc}), we have $$0 \neq \sum\limits_{d \in A_k} c_{d}\phi_{k}(x_{d}) \in \mbox{End}_{\Grpn}(\Vk).$$  Using the basis of $\mbox{End}_{\Grpn}(\Vk)$ as in Lemma \ref{ca}(a), it follows that, for $d$ such that $|d| \leq n$, $c_{d}$ can be nonzero only when $d\in \Pi_{k}(r) \bigcup \Lambda_{k}(r,p,n)$.

If there exists $d$ in (\ref{stc}) with more than $n$ blocks such that $c_d \neq 0$, then by the arguments similar to the case (ii), we get either $d\in \Pi_{k}(r)$ or $d\in\Lambda_{k}(r,p,n)$.
\end{proof}

Define the following mutually disjoint subsets of $A_{k+\frac{1}{2}}$:
\begin{align*}
& \Pi_{k+\frac{1}{2}}(r) := \Pi_{k+1}(r) \bigcap A_{k+\frac{1}{2}}, \\
& \Lambda_{k+\frac{1}{2}}(r,p,n) := \Lambda_{k+1}(r,p,n) \bigcap A_{k+\frac{1}{2}}, \\
& \Theta_{k+\frac{1}{2}}(r,p,n) := \Theta_{k+1}(r,p,n) \bigcap A_{k+\frac{1}{2}}.
\end{align*}
Also, define $A_{k+\frac{1}{2}}(r,p,n)$, a subset of $A_{k+\frac{1}{2}}$, by setting
\begin{equation*}
A_{k+\frac{1}{2}}(r,p,n) := \Pi_{k+\frac{1}{2}}(r) \bigcup \Lambda_{k+\frac{1}{2}}(r,p,n) \bigcup \Theta_{k+\frac{1}{2}}(r,p,n).
\end{equation*}

\begin{definition}
Define $\Thn := \C\mbox{-}span\{x_d \mid d \in A_{k+\frac{1}{2}}(r,p,n)\}$, a subspace of partition algebra $\C A_{k+\frac{1}{2}}(n)$.
\end{definition}

\begin{thm}
 The vector space $\Thn$ is a subalgebra of 
 $\C A_{k+\frac{1}{2}}(n)$.
\end{thm}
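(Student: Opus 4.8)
The plan is to exhibit $\Thn$ as the intersection of two subalgebras of $\C A_{k+1}(n)$, namely $\T_{k+1}(r,p,n)$ and the partition algebra $\C A_{k+\frac{1}{2}}(n)$, and then to invoke the elementary fact that the intersection of two subalgebras is again a subalgebra. The first of these is a subalgebra of $\C A_{k+1}(n)$ by the preceding theorem, and the second is a subalgebra of $\C A_{k+1}(n)$ by definition.

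First I would record the set-theoretic identity
$$A_{k+\frac{1}{2}}(r,p,n) = A_{k+1}(r,p,n) \cap A_{k+\frac{1}{2}},$$
which follows directly from the definitions of $\Pi_{k+\frac{1}{2}}(r)$, $\Lambda_{k+\frac{1}{2}}(r,p,n)$ and $\Theta_{k+\frac{1}{2}}(r,p,n)$ as the intersections of their integer-level counterparts with $A_{k+\frac{1}{2}}$, together with the distributivity of intersection over union.

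The crucial step is to show that $\{x_d \mid d \in A_{k+\frac{1}{2}}\}$ is a basis of $\C A_{k+\frac{1}{2}}(n)$. Here I would use the defining relation $d = \sum_{d' \leq d} x_{d'}$ together with the observation that coarsening preserves membership in $A_{k+\frac{1}{2}}$: if $d \in A_{k+\frac{1}{2}}$ has $(k+1)$ and $(k+1)'$ in the same block and $d' \leq d$, then, since $d'$ is coarser than $d$, the vertices $(k+1)$ and $(k+1)'$ remain in the same block of $d'$, whence $d' \in A_{k+\frac{1}{2}}$. Consequently the unitriangular change of basis between $\{d\}$ and $\{x_d\}$ restricts to the index set $A_{k+\frac{1}{2}}$, so that $\{x_d \mid d \in A_{k+\frac{1}{2}}\}$ spans $\C A_{k+\frac{1}{2}}(n) = \C\mbox{-}span\{d \mid d \in A_{k+\frac{1}{2}}\}$; it is moreover linearly independent, being a subset of the basis $\{x_d \mid d \in A_{k+1}\}$ of $\C A_{k+1}(n)$.

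Finally, since $\{x_d \mid d \in A_{k+1}\}$ is a basis of $\C A_{k+1}(n)$ and both $\T_{k+1}(r,p,n)$ and $\C A_{k+\frac{1}{2}}(n)$ are spanned by sub-collections of this basis, their intersection is precisely the span of the common basis vectors, that is, $\C\mbox{-}span\{x_d \mid d \in A_{k+1}(r,p,n) \cap A_{k+\frac{1}{2}}\} = \Thn$ by the identity above. As the intersection of two subalgebras of $\C A_{k+1}(n)$, the space $\Thn$ is then automatically a subalgebra. I expect the main obstacle to be the basis claim for $\C A_{k+\frac{1}{2}}(n)$; once the coarsening observation secures that, the rest is linear algebra with the basis $\{x_d\}$ and the already-established subalgebra structure of $\T_{k+1}(r,p,n)$.
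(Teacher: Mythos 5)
Your proof is correct and follows essentially the same route as the paper, which simply notes that $\Thn = \T_{k+1}(r,p,n) \bigcap \C A_{k+\frac{1}{2}}(n)$ and concludes. The only difference is that you carefully justify this intersection identity (via the observation that $A_{k+\frac{1}{2}}$ is closed under coarsening, so $\{x_d \mid d \in A_{k+\frac{1}{2}}\}$ is a basis of $\C A_{k+\frac{1}{2}}(n)$), a step the paper leaves implicit.
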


\begin{proof}
Note that $\Thn = \T_{k+1}(r,p,n) \bigcap \C A_{k+\frac{1}{2}}(n)$, hence $\Thn$ is an algebra.
\end{proof}

\begin{definition}[Tanabe algebra]
We call the algebras $\Tn$ and $\Thn$ as Tanabe algebras.
\end{definition}

From \cite[Page 879]{HR05}, there is an injective algebra homomorphism
\begin{align*}
\C A_{k}(n) & \hookrightarrow \C A_{k+\frac{1}{2}}(n) \\
d & \mapsto d',
\end{align*}
where $d\in A_{k}$ and $d'\in A_{k+\frac{1}{2}}$ has same blocks as $d$ with an additional block $\{(k+1),(k+1)'\}$. It is easy to see that the corresponding element $x_{d}$ is mapped to $(x_{d'}+\sum x_{d''})$, where the sum is over all $d''\in A_{k+\frac{1}{2}} \setminus \{d'\}$ obtained by connecting a block in $d'$ with the block $\{(k+1),(k+1)'\}$. Using the description of the above map in terms of the elements
$x_{d}$, we see that the algebra $\Tn$ can be embedded inside the algebra $\Thn$.

\begin{example}\label{k2} In this example, we describe $\Tn$ for various specific values of $r,p$ and $n$ when $k = 2$. The monoid $A_2 = \{d_1, d_2, \ldots, d_{15}\}$ with the elements given as below:  
 
\begin{tikzpicture}[scale=0.9,mycirc/.style={circle,fill=black, minimum size=0.1mm, inner sep = 1.5pt}
    ]
    \node (1) at (-1,0.5) {$d_1 =$};
    \node[mycirc,label=above:{$1$}] (n1) at (0,1) {};
    \node[mycirc,label=above:{$2$}] (n2) at (1,1) {};
    \node[mycirc,label=below:{$1'$}] (n3) at (0,0) {};
    \node[mycirc,label=below:{$2'$}] (n4) at (1,0) {};
    \node (a) at (1.4,0.3) {,};

    \node (2) at (2.5, 0.5) {$d_2 =$};
    \node[mycirc,label=above:{$1$}] (n5) at (3.5,1) {};
    \node[mycirc,label=above:{$2$}] (n6) at (4.5,1) {};
    \node[mycirc,label=below:{$1'$}] (n7) at (3.5,0) {};
    \node[mycirc,label=below:{$2'$}] (n8) at (4.5,0) {};
    \draw (n5) -- (n6);
    \node (b) at (4.9,0.3) {,};

    \node (3) at (6.0, 0.5) {$d_3 =$};
    \node[mycirc,label=above:{$1$}] (n9) at (7,1) {};
    \node[mycirc,label=above:{$2$}] (n10) at (8,1) {};
    \node[mycirc,label=below:{$1'$}] (n11) at (7,0) {};
    \node[mycirc,label=below:{$2'$}] (n12) at (8,0) {};
    \draw (n9) -- (n11);
    \node (c) at (8.4,0.3) {,};

    \node (4) at (9.5, 0.5) {$d_4 =$};
    \node[mycirc,label=above:{$1$}] (n13) at (10.5,1) {};
    \node[mycirc,label=above:{$2$}] (n14) at (11.5,1) {};
    \node[mycirc,label=below:{$1'$}] (n15) at (10.5,0) {};
    \node[mycirc,label=below:{$2'$}] (n16) at (11.5,0) {};
    \draw (n13) -- (n16);
    \node (d) at (11.9,0.3) {,};
    
\end{tikzpicture}

\vspace{1cm}

\begin{tikzpicture}[scale=0.9,mycirc/.style={circle,fill=black, minimum size=0.1mm, inner sep = 1.5pt}
    ]
    \node (1) at (-1,0.5) {$d_5 =$};
    \node[mycirc,label=above:{$1$}] (n1) at (0,1) {};
    \node[mycirc,label=above:{$2$}] (n2) at (1,1) {};
    \node[mycirc,label=below:{$1'$}] (n3) at (0,0) {};
    \node[mycirc,label=below:{$2'$}] (n4) at (1,0) {};
    \node (a) at (1.4,0.3) {,};
    \draw (n2) -- (n3);

    \node (2) at (2.5, 0.5) {$d_6 =$};
    \node[mycirc,label=above:{$1$}] (n5) at (3.5,1) {};
    \node[mycirc,label=above:{$2$}] (n6) at (4.5,1) {};
    \node[mycirc,label=below:{$1'$}] (n7) at (3.5,0) {};
    \node[mycirc,label=below:{$2'$}] (n8) at (4.5,0) {};
    \draw (n6) -- (n8);
    \node (b) at (4.9,0.3) {,};

    \node (3) at (6.0, 0.5) {$d_7 =$};
    \node[mycirc,label=above:{$1$}] (n9) at (7,1) {};
    \node[mycirc,label=above:{$2$}] (n10) at (8,1) {};
    \node[mycirc,label=below:{$1'$}] (n11) at (7,0) {};
    \node[mycirc,label=below:{$2'$}] (n12) at (8,0) {};
    \draw (n11) -- (n12);
    \node (c) at (8.4,0.3) {,};

    \node (4) at (9.5, 0.5) {$d_8 =$};
    \node[mycirc,label=above:{$1$}] (n13) at (10.5,1) {};
    \node[mycirc,label=above:{$2$}] (n14) at (11.5,1) {};
    \node[mycirc,label=below:{$1'$}] (n15) at (10.5,0) {};
    \node[mycirc,label=below:{$2'$}] (n16) at (11.5,0) {};
    \draw (n13) -- (n14);
    \draw (n15) -- (n16);
    \node (d) at (11.9,0.3) {,};
    
\end{tikzpicture}

\vspace{1cm}

\begin{tikzpicture}[scale=0.9,mycirc/.style={circle,fill=black, minimum size=0.1mm, inner sep = 1.5pt}
    ]
    \node (1) at (-1,0.5) {$d_9 =$};
    \node[mycirc,label=above:{$1$}] (n1) at (0,1) {};
    \node[mycirc,label=above:{$2$}] (n2) at (1,1) {};
    \node[mycirc,label=below:{$1'$}] (n3) at (0,0) {};
    \node[mycirc,label=below:{$2'$}] (n4) at (1,0) {};
    \node (a) at (1.4,0.3) {,};
    \draw (n1) -- (n4);
    \draw (n2) -- (n3);

    \node (2) at (2.5, 0.5) {$d_{10} =$};
    \node[mycirc,label=above:{$1$}] (n5) at (3.5,1) {};
    \node[mycirc,label=above:{$2$}] (n6) at (4.5,1) {};
    \node[mycirc,label=below:{$1'$}] (n7) at (3.5,0) {};
    \node[mycirc,label=below:{$2'$}] (n8) at (4.5,0) {};
    \draw (n6) -- (n8);
    \draw (n5) -- (n7);
    \node (b) at (4.9,0.3) {,};

    \node (3) at (6.0, 0.5) {$d_{11} =$};
    \node[mycirc,label=above:{$1$}] (n9) at (7,1) {};
    \node[mycirc,label=above:{$2$}] (n10) at (8,1) {};
    \node[mycirc,label=below:{$1'$}] (n11) at (7,0) {};
    \node[mycirc,label=below:{$2'$}] (n12) at (8,0) {};
    \draw (n11) -- (n12);
    \draw (n11) -- (n9);
    \node (c) at (8.4,0.3) {,};

    \node (4) at (9.5, 0.5) {$d_{12} =$};
    \node[mycirc,label=above:{$1$}] (n13) at (10.5,1) {};
    \node[mycirc,label=above:{$2$}] (n14) at (11.5,1) {};
    \node[mycirc,label=below:{$1'$}] (n15) at (10.5,0) {};
    \node[mycirc,label=below:{$2'$}] (n16) at (11.5,0) {};
    \draw (n13) -- (n14);
    \draw (n15) -- (n13);
    \node (d) at (11.9,0.3) {,};
    
\end{tikzpicture}

\vspace{0.5cm}

\begin{tikzpicture}[scale=0.9,mycirc/.style={circle,fill=black, minimum size=0.1mm, inner sep = 1.5pt}
    ]
    \node (1) at (-1,0.5) {$d_{13} =$};
    \node[mycirc,label=above:{$1$}] (n1) at (0,1) {};
    \node[mycirc,label=above:{$2$}] (n2) at (1,1) {};
    \node[mycirc,label=below:{$1'$}] (n3) at (0,0) {};
    \node[mycirc,label=below:{$2'$}] (n4) at (1,0) {};
    \node (a) at (1.4,0.3) {,};
    \draw (n1) -- (n2);
    \draw (n2) -- (n4);

    \node (2) at (2.5, 0.5) {$d_{14} =$};
    \node[mycirc,label=above:{$1$}] (n5) at (3.5,1) {};
    \node[mycirc,label=above:{$2$}] (n6) at (4.5,1) {};
    \node[mycirc,label=below:{$1'$}] (n7) at (3.5,0) {};
    \node[mycirc,label=below:{$2'$}] (n8) at (4.5,0) {};
    \draw (n6) -- (n8);
    \draw (n8) -- (n7);
    \node (b) at (4.9,0.3) {,};

    \node (3) at (6.0, 0.5) {$d_{15} =$};
    \node[mycirc,label=above:{$1$}] (n9) at (7,1) {};
    \node[mycirc,label=above:{$2$}] (n10) at (8,1) {};
    \node[mycirc,label=below:{$1'$}] (n11) at (7,0) {};
    \node[mycirc,label=below:{$2'$}] (n12) at (8,0) {};
    \draw (n11) -- (n12);
    \draw (n10) -- (n9);
    \draw (n9) -- (n11);
    \draw (n10) -- (n12);
    \node (c) at (8.4,0.3) {.};

\end{tikzpicture}
 
\begin{enumerate}[(i)]
 \item For $r = 2$, we have $\Pi_2(2) = \{d_8, d_9, d_{10}, d_{15}\}.$ 
     \begin{enumerate}[(a)]
       \item For $p=2, n=2$, the sets $$\Lambda_2(2,2,2) =\{d_{11}, d_{12}, d_{13}, d_{14}\}$$ $$ \mbox{ and }\Theta_2(2,2,2) = \{d_1, d_2, d_3, d_4, d_5, d_6, d_7 \}.$$ Thus, $\T_2(2,2,2)$ is the partition algebra $\C A_2(2)$. 
       \item For $p=2, n=3$, $\Lambda_2(2,2,3)$ is an empty set and $\Theta_2(2,2,3) =\{d_1\}.$
       \item For $p=2, n=4$, we have $\Lambda_2(2,2,4) =\{d_1\}$ and $\Theta_2(2,2,4)$ is an empty set.
     \end{enumerate}
 \item For $r \neq 1,2$, we have $\Pi_2(r) = \{d_9, d_{10}, d_{15}\}.$  For $r = 3$,   $\Lambda_2(r,p,n)$ is nonempty if and only if $(r,p,n) = (3,3,3)$; and $\Lambda_2(3,3,3) = \{d_2, d_7\}.$ 
 For $r = 4$,   $\Lambda_2(r,p,n)$ is nonempty if and only if $(r,p,n) = (4,2,2)$ or $(4,4,2)$; and $\Lambda_2(4,2,2) = \Lambda_2(4,4,2) = \{d_8\}.$ For $r > 4$, $\Lambda_2(r,p,n)$ is empty for all values of $p$ and $n$. In general, for $r > 2k$, $\Lambda_k(r,p,n)$ is empty for all values of $p$ and $n$.
 \end{enumerate}
\end{example}

\begin{remark}\label{222k}
 For $n \geq 2k$, $\Theta_k(r,p,n)$ is an empty set. For $n \geq 2k$, the set $\Lambda_k(r,p,n)$ is nonempty if and only if $(r,p,n) = (2,2,2k)$; $\Lambda_k(2,2,2k) = \{d\}$, where $d$ is a partition diagram with $2k$ blocks, i.e., each block consists of a single vertex. Using the multiplication rule in Lemma \ref{mult}, it is easy to check that the corresponding $x_d$ is a central element of Tanabe algebra $\T_k(2,2,2k)$.
\end{remark}

\section{Complex reflection groups}\label{crg}
For an $n$-dimensional complex vector space $W$, a linear isomorphism of $W$ of finite order is said to be a {\it reflection} in $W$ if it has exactly $(n-1)$ eigenvalues equal to $1$. A {\it complex reflection group} $R$ in $W$ is a group generated by reflections in $W$. The space $W$ is called the {\it reflection representation} of $R$. We say $R$ is irreducible if the $R$-invariant complement of the subspace $W^R$, which is fixed pointwise by $R$, in $W$ is irreducible. If there exists a direct sum 
$W=W_{1}\oplus W_{2}\oplus\cdots\oplus W_{t}$, where $W_{i}$ is non-trivial proper subspace of $W$ for each $1 \leq i \leq n$, such that $W_1, W_2, \ldots, W_t$ are permuted among themselves under the action of $R$, then we say that $R$ is {\it imprimitive}. By Shephard-Todd classification, the groups $\Grpn$, for $n > 1$, are the only finite irreducible imprimitive complex reflection groups \cite[Section 2]{ST54}. 

Suppose that $G := \Z/r\Z$ is the cyclic group of order $r$ with $\zeta$, a primitive $r$-th root of unity. Define $\Drpn$ to be the subgroup of $GL_n(\C)$ consisting of diagonal matrices as:
\begin{equation*}
 \Drpn := \left\{ \begin{bmatrix}
              \zeta^{i_1} & 0 & \ldots & 0 \\
              0 & \zeta^{i_2}  & \ldots & 0 \\
               \vdots & \vdots & \ddots & \vdots  \\
               0 & 0 & \ldots & \zeta^{i_n}\\
             \end{bmatrix} 
             \mid i_1 + i_2 + \cdots + i_n \equiv 0 (\mbox{mod }p)\right\}.
\end{equation*}
Let $S_n$ be the group of $n \times n$ permutation matrices. Define $\Grpn$ to be the subgroup of $GL(n, \C)$ generated by $\Drpn$ and $S_n$. Since $S_n$ normalizes $\Drpn$ and $\Drpn \bigcap S_n = \{I_n\}$, where $I_n$ is the identity matrix, so the group $\Grpn$ is a semidirect product: $$\Grpn = \Drpn \rtimes S_n.$$ Thus, as a subgroup of $GL_n(\C)$, the group $\Grpn$ consists of generalized permutation matrices with nonzero entries being $r$-th roots of unity and the $m$-th power of the product of nonzero entries is one. Also, the elements of $\Grpn$ can be written as $(n+1)$-tuple:
$$\Grpn = \{(\zeta^{i_1}, \zeta^{i_2}, \ldots, \zeta^{i_n}, \pi) \mid  i_1 + i_2 + \cdots + i_n \equiv 0 (\mbox{mod }p), \pi \in S_n\}.$$ 

The particular case when $p=1$ is the group $G(r,1,n)$, the wreath product of the group $G$ by the symmetric group $S_n$, of order $r^nn!$. Taking the exact sequence 
\begin{align*}
 1 \longrightarrow \Grpn  \longrightarrow  G(r,1,n) & \longrightarrow \Z/p\Z \longrightarrow 1 \\
  (\zeta^{i_1}, \zeta^{i_2}, \ldots, \zeta^{i_n}, \pi) & \mapsto \zeta^{i_1 + i_2 + \cdots + i_n},
\end{align*}
we see that $\Grpn$ is a normal subgroup of the group $G(r,1,n)$ of index $p$. So, the order of the group $\Grpn$ is $(r^nn!)/p$. 

Some families of groups which are special cases of $\Grpn$ are:
\begin{enumerate}[(a)]
 \item cyclic group of order $r$, i.e., $ \Z/r\Z = G(r,1,1)$,
 \item dihedral group of order $2r$, $D_{2r} = G(r,r,2)$,
 \item symmetric group on $n$ symbols, $S_n = G(1,1,n)$,
 \item Weyl group of type $B_n$ (also called hyperoctahedral group) is $G(2,1,n)$,
 \item Weyl group of type $D_n$ is $G(2,2,n)$. 
\end{enumerate}

Let $G(n)$ be an isomorphic copy of $G$ in $G(r,1,n)$ defined as $$G(n) := \{(1, \ldots, 1,g_n, (1))\mid g_n \in G\}.$$ Assume that $S_{n-1}$ is the subgroup of $S_n$ consisting of elements fixing $n$. The groups $G(r,1,n-1) \times G(n)$ and $\Grpn$ are subgroups of $G(r,1,n)$. Let $\Lrpn$ be the subgroup of $\Grpn$ defined as:
\begin{align*}
 \Lrpn := 
 & (G(r,1,n-1) \times G(n))\bigcap \Grpn \\
 = & ((G^{n-1} \rtimes S_{n-1}) \times G(n))\bigcap (\Drpn \rtimes S_n) \\ 
 = & (G^n \rtimes S_{n-1})\bigcap (\Drpn \rtimes S_n) \\ 
 = & \Drpn \rtimes S_{n-1}.
\end{align*}
As a subgroup of $GL_n(\C)$, the group $\Lrpn$ consists of those elements in $\Grpn$ such that the $(n,n)$-th entry is nonzero. 
For $p=1$, we have
\begin{align*}
 L(r,1,n) &= G^n \rtimes S_{n-1} \\ 
 & = \big(G^{n-1} \rtimes S_{n-1}\big) \times G(n) \\  
 &= G(r,1,n-1) \times G(n).
\end{align*}
The order of $L(r,1,n)$ is $r^n(n-1)!$. Taking the exact sequence 
\begin{align*}
 1 \longrightarrow \Lrpn  \longrightarrow  L(r,1,n) & \longrightarrow \Z/p\Z \rightarrow 1 \\
  (g_1, g_2, \ldots, g_n, \pi) & \mapsto g_1g_2 \cdots g_n,
\end{align*}
we see that $\Lrpn$ is a normal subgroup of the group $L(r,1,n)$ of index $p$. 
Thus, the order of $\Lrpn$ is $(r^n(n-1)!)/p$.  

Given an $r$-tuple of Young diagrams $(\lambda_1, \lambda_2, \ldots, \lambda_r) \in \Y(r,n-1)$, choose one $i\in \{1,2, \ldots, r\}$, take $\lambda_i$ ($\lambda_i$ may be empty also), color it by $n$ and denote by $\lambda_i^n$. We note that $\lambda_i^n$ denotes the same Young diagram $\lambda_i$, but it has the color $n$. The {\it $(n,i)$-colored $r$-tuple of Young diagrams}, denoted by $\La^{(n,i)} := (\lambda_1, \lambda_2, \ldots,\lambda_{i-1}, \lambda_i^n, \lambda_{i+1}, \ldots, \lambda_r)$, consists of the $r$-tuple $(\lambda_1, \lambda_2, \ldots, \lambda_r) \in \Y(r,n-1)$ with $i$-th component $\La_i$ colored by $n$. Let $\Y^n(r,n-1)$ denote the set of all $(n,i)$-colored $r$-tuples of Young diagrams with total $n-1$ boxes for $i = 1,2, \ldots, r$.

\begin{lemma}
The irreducible $L(r,1,n)$-modules are indexed by the elements of $\Y^n(r,n-1)$.  
\end{lemma}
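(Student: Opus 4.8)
The plan is to exploit the direct product decomposition of $L(r,1,n)$ established just above, namely
$$L(r,1,n) = G(r,1,n-1) \times G(n),$$
and then invoke the classical description of the irreducible representations of a direct product of finite groups over $\C$. First I would recall that for finite groups $A$ and $B$, the complex irreducible representations of $A \times B$ are exactly the external tensor products $U \boxtimes W$, as $U$ ranges over the irreducible $A$-modules and $W$ over the irreducible $B$-modules; distinct pairs $(U,W)$ yield inequivalent irreducibles, and every irreducible of $A \times B$ arises this way. This is standard, following from $\C[A \times B] \cong \C[A] \otimes \C[B]$ together with a dimension count, or equivalently from the fact that the products $\chi_U \otimes \chi_W$ form an orthonormal basis of the space of class functions.

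Next I would identify the irreducibles of each factor. By the discussion in Section \ref{ov}, the irreducible $G(r,1,n-1)$-modules are indexed by the $r$-tuples of Young diagrams in $\Y(r,n-1)$; write $V^{\mu}$ for the irreducible indexed by $\mu = (\lambda_1, \ldots, \lambda_r) \in \Y(r,n-1)$. The second factor $G(n)$ is isomorphic to the cyclic group $G = \Z/r\Z$, whose irreducible modules are precisely the one-dimensional representations $\sigma_1, \sigma_2, \ldots, \sigma_r$ introduced earlier. Combining this with the direct product principle, the irreducible $L(r,1,n)$-modules are exactly the modules $V^{\mu} \boxtimes \sigma_i$ for $\mu \in \Y(r,n-1)$ and $i \in \{1, 2, \ldots, r\}$, pairwise inequivalent.

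Finally I would set up the indexing bijection with $\Y^n(r,n-1)$. Given a pair $(\mu, \sigma_i)$ with $\mu = (\lambda_1, \ldots, \lambda_r)$, send it to the $(n,i)$-colored tuple $\La^{(n,i)} = (\lambda_1, \ldots, \lambda_{i-1}, \lambda_i^n, \lambda_{i+1}, \ldots, \lambda_r)$, that is, record the character $\sigma_i$ of $G(n)$ by coloring the $i$-th component with $n$. This assignment is a bijection onto $\Y^n(r,n-1)$: the underlying uncolored tuple recovers $\mu$, and the position of the color recovers $i$, hence $\sigma_i$. I do not anticipate a genuine obstacle here, since the content is almost entirely the bookkeeping of translating the pair $(\mu, \sigma_i)$ into the coloring datum. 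The only point deserving a sentence of care is that the coloring is an injective encoding — the map does not identify distinct pairs — which is immediate because both $\mu$ and the colored index $i$ are read off unambiguously from $\La^{(n,i)}$; the matching cardinalities $|\Y^n(r,n-1)| = r \cdot |\Y(r,n-1)|$ provide a useful consistency check.
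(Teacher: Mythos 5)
Your proposal is correct and follows essentially the same route as the paper: the paper's proof also uses the decomposition $L(r,1,n) = G(r,1,n-1) \times G(n)$, observes that the irreducibles are the tensor products $V^{\La} \otimes \sigma_i$ for $\La \in \Y(r,n-1)$ and $1 \leq i \leq r$, and records the factor $\sigma_i$ via the coloring. You have merely spelled out the standard direct-product fact and the bijection with $\Y^n(r,n-1)$ in more detail than the paper does.
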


\begin{proof}
 The irreducible representations of $G$ are $\sigma_1, \sigma_2, \ldots, \sigma_r$ (defined in Section \ref{ov}). Suppose that $V^{\lambda}$ is the irreducible representation of $G(r,1,n-1)$ corresponding to $\lambda \in \Y(r,n-1)$. Then,
$$\{V^{\La} \otimes \sigma_i \mid \La \in \Y(r,n-1) , i = 1, \ldots, r \}$$
is the set of irreducible representations of $L(r,1,n)$ which is indexed by the elements of the set $\Y^n(r,n-1)$.
\end{proof}

We describe the branching rule from $G(r,1,n)$ to $L(r,1,n)$.  For $\mu = (\mu_1,\mu_2, \ldots, \mu_r) \in \Y(r,n)$ with $\mu_i \neq \emptyset$, let $\mu \downarrow i$ denote the set of elements $\nu^{(n,i)} \in \Y^n(r,n-1)$ such that $\nu$ is obtained from $\mu$ by removing the box at an inner corner of $\mu_i$ and then coloring the $i$-th component of $\nu$ by $n$ to obtain $(n,i)$-colored $r$-tuple $\nu^{(n,i)}$. Assume that $V^{\mu}$ and $V^{\nu^{(n,i)}}$ are the irreducible $G(r,1,n)$-module and $L(r,1,n)$-module corresponding to $\mu \in \Y(r,n)$ and $\nu^{(n,i)} \in \Y^n(r,n-1)$ respectively.

\begin{thm}[Branching rule from $G(r,1,n)$ to $L(r,1,n)$]\label{brgl1}
We have  
\begin{equation}
 \mbox{\em Res}_{L(r,1,n)}^{G(r,1,n)}(V^{\mu}) = \bigoplus_{i=1}^{r}\left(\bigoplus_{\nu \in \mu \downarrow i} V^{\nu^{(n,i)}}\right).
\end{equation}
\end{thm}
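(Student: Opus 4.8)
The plan is to recognize $L(r,1,n)$ as a term of the Okounkov--Vershik chain (\ref{mfic}) and then read the branching directly off the Bratteli diagram of that chain. Unwinding the definition of $H_{i,n}$ at $i=n-1$, an element $(g_1,\ldots,g_n,\pi)$ lies in $H_{n-1,n}$ precisely when $\pi(j)=j$ for $j=n$, i.e. $\pi\in S_{n-1}$; hence $H_{n-1,n}=G^n\rtimes S_{n-1}=L(r,1,n)$. Consequently the restriction $\mbox{Res}^{G(r,1,n)}_{L(r,1,n)}$ is exactly the one-step restriction $H_{n,n}\to H_{n-1,n}$, which by Theorem \ref{mfc} and the commutativity of the relative commutant (noted after it) is multiplicity free.

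Next I would invoke the Gelfand--Tsetlin decomposition of $V^\mu$. By Theorem \ref{gzt}, $V^\mu=\bigoplus_T V_T$, the sum running over standard $r$-tableaux $T$ of shape $\mu$, and by the multiplicity-free branching the decomposition into irreducible $H_{n-1,n}$-modules is canonical: as in (\ref{decomp}), grouping the paths by their value $\lambda_{n-1}$ at level $n-1$ gives $\mathbb{C}[H_{n-1,n}]\cdot v_T=V^{\lambda_{n-1}}$. Concretely, the shape underlying $\lambda_{n-1}$ is obtained from $T$ by deleting the box containing the entry $n$; this box occupies a removable corner of exactly one component $\mu_i$, and its removal yields $\nu\in\Y(r,n-1)$ with $\nu\in\mu\downarrow i$. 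As $T$ ranges over all standard tableaux of shape $\mu$, the data $(\nu,i)$ ranges over exactly the colored shapes $\nu^{(n,i)}$ with $\nu\in\mu\downarrow i$.

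It remains to identify each $\lambda_{n-1}$ as an element of $\Y^n(r,n-1)$, and this is where the color $\sigma_i$ enters. Since $H_{n-1,n}=G(r,1,n-1)\times G(n)$, every irreducible has the form $V^\nu\otimes\sigma$, and I would determine $\sigma$ from the $G(n)$-action on the pertinent GZ-subspaces: by Theorem \ref{gzt}, $V_T\cong r_T(1)\otimes\cdots\otimes r_T(n)$ as a $G^n$-module, so the last tensor factor $G(n)$ acts through $r_T(n)=\sigma_{j_n}$, where $j_n$ indexes the component of $\mu$ in which $n$ sits. Because that box lies in $\mu_i$, we have $j_n=i$, so $\mathbb{C}[H_{n-1,n}]\cdot v_T\cong V^\nu\otimes\sigma_i=V^{\nu^{(n,i)}}$. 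Assembling these over all $i$ and all $\nu\in\mu\downarrow i$ gives $\mbox{Res}^{G(r,1,n)}_{L(r,1,n)}(V^\mu)=\bigoplus_{i=1}^r\bigoplus_{\nu\in\mu\downarrow i}V^{\nu^{(n,i)}}$, each summand appearing once by multiplicity-freeness.

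The step I expect to require the most care is the bookkeeping of the color: one must verify that the $G(n)$-character carried by the Okounkov--Vershik data on a GZ-subspace $V_T$ is precisely the label $\sigma_i$ of the component from which the box containing $n$ was removed. Theorem \ref{gzt} is essential here, since the shape $\nu$ alone does not determine the correct $L(r,1,n)$-module; it is the explicit $G^n$-module structure of $V_T$ that supplies the missing color. A secondary check is that distinct removable corners of $\mu$ produce distinct vertices $\nu^{(n,i)}$ in $\Y^n(r,n-1)$, so that the right-hand side is genuinely multiplicity free, matching the restriction predicted by Theorem \ref{mfc}.
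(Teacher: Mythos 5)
Your proposal is correct and follows essentially the same route as the paper: both identify $L(r,1,n)$ with $H_{n-1,n}$ in the multiplicity-free chain (\ref{mfic}), decompose $V^\mu$ into GZ-subspaces via Theorem \ref{gzt}, and pin down each $L(r,1,n)$-constituent $V^{\nu}\otimes\sigma_i$ by matching the label (in particular the $G(n)$-factor $r_T(n)=\sigma_i$) and the Jucys--Murphy data, which by Lemma \ref{sc} determine the GZ-subspace and hence the irreducible module uniquely. The only cosmetic difference is direction of argument: the paper starts from a GZ-subspace of $V^{\nu^{(n,i)}}$ and locates it inside $V^\mu$ by weight, whereas you group the GZ-subspaces of $V^\mu$ by the box containing $n$ and then read off the color.
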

{\bf Remark}: We take equality in place of isomorphism because the restriction rule is multiplicity free which makes the decomposition canonical and we identify $V^{\nu^{(n,i)}}$ with the corresponding $L(r,1,n)$-submodule of $V^{\mu}$.

\begin{proof}
 Since $\nu_j = \mu_j$ for $j \neq i$ and $|\nu_i| = |\mu_i| - 1$, therefore given a $GZ$-subspace of $V^{\mu}$, there exists a $GZ$-subspace of $V^{\nu^{(n,i)}} = V^{\nu} \otimes \sigma_i$ with the same label. Also, for $1 \leq i \leq n-1$, the action of $X_i \in GZ_{n-1,n} \subseteq GZ_{n,n}$ on $GZ$-subspace of $V^{\nu^{(n,i)}}$ is same as its action on GZ-subspace of $V^{\mu}$. A GZ-subspace is uniquely determined by its weight and a GZ-subspace uniquely determines the parametrization of irreducible representation. Thus, $V^{\nu^{(n,i)}}$ appears in the restriction of $V^{\mu}$ as a $L(r,1,n)$-module with multiplicity one since the restriction from $G(r,1,n)$ to $L(r,1,n)$ is multiplicity free (follows from chain (\ref{mfic}) since $H_{m-1,n} = L(r,1,n)$).
\end{proof}

The next step is the parametrization of the irreducible representations of $\Grpn$ and $\Lrpn$ using Clifford theory. Consider the one-dimensional representation $$\delta_0 : G(r,1,n) \longrightarrow \C^*$$  $$ \delta_0(g_1, g_2, \ldots, g_n, \sigma) = g_1g_2\ldots g_n.$$ As a $G(r,1,n)$-module, $\delta_0$ is parametrized by $(\emptyset, (n), \emptyset, \ldots, \emptyset)$ and $\Grpn \subseteq \mbox{Ker} (\delta_0^m)$.  We use the same notation $\delta_0$ to denote the restriction of $\delta_0$ to $L(r,1,n)$. It will be clear from the context whether we consider $\delta_0$ as a $G(r,1,n)$-module or as a $L(r,1,n)$-module.  As a $L(r,1,n)$-module, $\delta_0$ is parametrized by the $(n,2)$-colored $r$-tuple $(\emptyset, (n-1)^n, \emptyset, \ldots, \emptyset)$ and $\Lrpn \subseteq \mbox{Ker} (\delta_0^m)$.  The cyclic group $C$ generated by $\delta_0^m$ is of order $p$. Thus $$ C \cong G(r,1,n)/\Grpn \cong L(r,1,n)/\Lrpn.$$ 

Define the shift map on $\Y(r,n)$ as $\sh : \Y(r,n) \longrightarrow \Y(r,n)$ by $$  (\La_1, \La_2, \ldots, \La_r) \mapsto (\La_r, \La_1, \La_2, \ldots, \La_{r-1}).$$
Using the same notation, the shift map on $\Y^n(r,n-1)$ is defined as 
\begin{align*}
\sh : \Y^n(r,n-1) & \longrightarrow \Y^n(r,n-1) \mbox{ by } \\ 
(\mu_1, \mu_2, \ldots,\mu_i^n,\ldots,\mu_r) & \mapsto (\mu_r, \mu_1, \mu_2, \ldots,\mu_{i-1}, \mu_i^n,\ldots,\mu_{r-1}),
\end{align*}
where the $r$-tuples on L.H.S. and R.H.S. are $(n,i)$-colored and $(n,i+1)$-colored respectively.

Suppose that $V^{\La}$ and $V^{\mu^{(n,i)}}$ denote the irreducible representations of $G(r,1,n)$ and $L(r,1,n)$ parametrized by the $r$-tuple $\La = (\La_1, \La_2, \ldots, \La_r) \in \Y(r,n)$ and $(n,i)$-colored $r$-tuple $\mu^{(n,i)} = (\mu_1, \mu_2, \ldots,\mu_i^n,\ldots,\mu_r) \in \Y^n(r,n-1)$ for some $i \in \{1,2,\ldots,r\}$ respectively.

The following lemma is proved using Okounkov-Vershik approach. Part (a) is  \cite[Theorem 24]{MRW04} and it was proved there using $*$-rim hook tableaux.   

\begin{lemma}\label{shl}
For $\La \in \Y(r,n)$ and $\mu^{(n,i)} \in \Y^n(r,n-1)$, the following are true:
\begin{enumerate}[(a)]
 \item As $G(r,1,n)$-modules, $$ \delta_0 \otimes V^{\La} \; \cong \; V^{\emph{sh}(\La)}.$$
 \item As $L(r,1,n)$-modules, $$ \delta_0 \otimes V^{\mu^{(n,i)}} \; \cong \; V^{\emph{sh}(\mu^{(n,i)})}.$$
\end{enumerate}
\end{lemma}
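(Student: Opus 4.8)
The plan is to identify the irreducible $G(r,1,n)$-module $\delta_0 \otimes V^{\La}$ through the Okounkov--Vershik weights of its Gelfand--Tsetlin subspaces. Since $\delta_0$ is one-dimensional, $\delta_0 \otimes V^{\La}$ is irreducible, hence equals $V^{\La'}$ for a unique $\La' \in \Y(r,n)$, and it is enough to prove $\La' = \sh(\La)$. The restriction of $\delta_0$ to each $H_{i,n}$ in the chain (\ref{mfic}) is one-dimensional, so for each $T$ the vector $\delta_0 \otimes v_T$ is again a simultaneous eigenvector for the whole Gelfand--Tsetlin algebra; by Lemma \ref{sc} these vectors form the GZ-basis of $\delta_0 \otimes V^{\La}$, and its GZ-subspaces are exactly $\{\delta_0 \otimes V_T \mid T \in \mbox{Tab}(r,\La)\}$. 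The task is thus to compute the weight --- the $G^n$-label together with the $X_1, \ldots, X_n$ eigenvalues --- of each $\delta_0 \otimes V_T$ and to match it with a GZ-subspace of $V^{\sh(\La)}$.

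For the label I would use the identity $\delta_0|_{G^n} = \sigma_2^{\otimes n}$, which holds because $\sigma_2(\zeta) = \zeta$ and $\delta_0(g_1, \ldots, g_n, e) = g_1 \cdots g_n$, together with $\sigma_j \otimes \sigma_2 \cong \sigma_{j+1}$, where the index is read modulo $r$ so that $\sigma_r \otimes \sigma_2 \cong \sigma_1$. By Theorem \ref{gzt}, if the $G^n$-label of $V_T$ is $r_T(1) \otimes \cdots \otimes r_T(n)$ with $r_T(i) = \sigma_{j_i}$, then the label of $\delta_0 \otimes V_T$ is $\sigma_{j_1+1} \otimes \cdots \otimes \sigma_{j_n+1}$, i.e., every color is shifted up by one cyclically. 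For the Jucys--Murphy eigenvalues, the key point is that $\delta_0$ takes the value $1$ on each summand $\zeta_i^l \zeta_j^{-l} s_{ij}$ of $X_j$, since the two nontrivial cyclic entries $\zeta^l$ and $\zeta^{-l}$ cancel in the product defining $\delta_0$. Consequently $X_j$ acts on $\delta_0 \otimes V_T$ by the same scalar $r\, c(b_T(j))$ as on $V_T$, so all JM-eigenvalues are preserved.

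To close part (a), I would match these data against $V^{\sh(\La)}$. The shift $\sh$ carries the component $\La_j$ to position $j+1$ of $\sh(\La)$ without changing its shape or its entries, inducing a bijection $T \mapsto \sh(T)$ of standard tableaux; under it every box keeps its content, so $r\, c(b_{\sh(T)}(j)) = r\, c(b_T(j))$, while its color rises from $\sigma_{j_i}$ to $\sigma_{j_i+1}$. Hence $\delta_0 \otimes V_T$ and the GZ-subspace $V_{\sh(T)}$ of $V^{\sh(\La)}$ share the same weight, and since a GZ-subspace is uniquely determined by its weight and in turn pins down the ambient irreducible module, $\delta_0 \otimes V^{\La} \cong V^{\sh(\La)}$. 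For part (b) I would instead exploit the product structure $L(r,1,n) = G(r,1,n-1) \times G(n)$: writing $V^{\mu^{(n,i)}} = V^{\mu} \boxtimes \sigma_i$ and $\delta_0|_{L(r,1,n)} \cong \delta_0 \boxtimes \sigma_2$ (the first factor being the analogous character of $G(r,1,n-1)$), one gets $\delta_0 \otimes V^{\mu^{(n,i)}} \cong (\delta_0 \otimes V^{\mu}) \boxtimes (\sigma_2 \otimes \sigma_i) \cong V^{\sh(\mu)} \boxtimes \sigma_{i+1}$ by part (a) applied on $G(r,1,n-1)$ and the relation $\sigma_2 \otimes \sigma_i \cong \sigma_{i+1}$; the right-hand side is exactly $V^{\sh(\mu^{(n,i)})}$.

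The genuinely routine parts are the two identities $\sigma_j \otimes \sigma_2 \cong \sigma_{j+1}$ and $\delta_0(\zeta_i^l \zeta_j^{-l} s_{ij}) = 1$. The step needing the most care is the cyclic bookkeeping --- ensuring that the color shift $j \mapsto j+1$ induced by $\sigma_2$ matches exactly the positional shift of $\sh$, including the wrap-around $r \mapsto 1$ --- and checking that the natural bijection $T \mapsto \sh(T)$ really transports the full weight of $\delta_0 \otimes V_T$ onto that of a GZ-subspace of $V^{\sh(\La)}$, since the argument relies entirely on the uniqueness-by-weight principle rather than on an explicit isomorphism.
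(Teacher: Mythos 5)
Your proposal is correct and follows essentially the same route as the paper: both identify $\delta_0\otimes V^{\La}$ by matching the weight (the $G^n$-label together with the Jucys--Murphy eigenvalues) of a Gelfand--Tsetlin subspace of $\delta_0\otimes V^{\La}$ with that of a GZ-subspace of $V^{\sh(\La)}$ and then invoking uniqueness-by-weight plus irreducibility; the paper does this only for the row-major tableau $R$, while you do it for all $T$, which is harmless. The only (minor) divergence is in part (b), where the paper simply reruns the weight argument for $L(r,1,n)$ using $X_1,\dots,X_{n-1}$, whereas you reduce to part (a) via the factorization $L(r,1,n)=G(r,1,n-1)\times G(n)$ and $\delta_0|_{L(r,1,n)}\cong\delta_0\boxtimes\sigma_2$ --- both are valid and the bookkeeping $\sigma_2\otimes\sigma_i\cong\sigma_{i+1}$ matches the definition of $\sh$ on $\Y^n(r,n-1)$.
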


\begin{proof}
\begin{enumerate}[(a)]
 \item A $GZ$-subspace of an irreducible representation of $G(r,1,n)$ is uniquely determined by its weight. Also, a $GZ$-subspace uniquely determines the $r$-tuple of Young diagrams in $\Y(r,n)$ which parametrize the irreducible representation of which it is a $GZ$-subspace.

For $\La = (\La_1, \La_2, \ldots, \La_r) \in \Y(r,n)$ with $y_i :=  |\La_i| $, let $R$ be the standard $r$-tuple of Young tableaux written in row major order. The $GZ$-subspace of type $V_R$ is isomorphic to 
$$\underbrace{\sigma_1 \otimes \cdots \otimes \sigma_1}_{y_1\mbox{-fold}}\otimes \underbrace{\sigma_2 \otimes \cdots \otimes \sigma_2}_{y_2\mbox{-fold}} \otimes \cdots \otimes \underbrace{\sigma_r \otimes \cdots \otimes \sigma_r}_{y_r\mbox{-fold}}$$ as a $G^n$-module. For $i =1,2, \ldots, n$ and $GZ$-basis element $$v_R = \underbrace{v_1 \otimes \cdots \otimes v_1}_{y_1\mbox{-fold}}\otimes \underbrace{v_2 \otimes \cdots \otimes v_2}_{y_2\mbox{-fold}} \otimes \cdots \otimes \underbrace{v_r \otimes \cdots \otimes v_r}_{y_r\mbox{-fold}}$$ we have 
$ X_i(v_R) = r c(b_R(i))(v_R).$

The $GZ$-subspace of $\delta_0$ is given by $n$-fold $\sigma_2 \otimes \cdots \otimes \sigma_2$ with $GZ$-basis element given by $n$-fold $v_2 \otimes \cdots \otimes v_2.$ Thus, the $GZ$-subspace of $\delta_0 \otimes V^{\La}$ is 
\begin{align*}
 & (\underbrace{\sigma_1 \otimes \cdots \otimes \sigma_1}_{y_1\mbox{-fold}}\otimes \underbrace{\sigma_2 \otimes \cdots \otimes \sigma_2}_{y_2\mbox{-fold}} \otimes \cdots \otimes \underbrace{\sigma_r \otimes \cdots \otimes \sigma_r}_{y_r\mbox{-fold}}) \otimes (\underbrace{\sigma_2 \otimes \cdots \otimes \sigma_2}_{n \mbox{-fold}}) \\
 & \cong \; \underbrace{\sigma_2 \otimes \cdots \otimes \sigma_2}_{y_1\mbox{-fold}}\otimes \underbrace{\sigma_3 \otimes \cdots \otimes \sigma_3}_{y_2\mbox{-fold}} \otimes \cdots \otimes \underbrace{\sigma_1\otimes \cdots \otimes \sigma_1}_{y_r\mbox{-fold}} \\
 & \cong \; V_{\sh(R)},
\end{align*}
isomorphic as $G^n$-module, with basis element $v'$ being
$$
\underbrace{(v_1 \otimes v_2) \cdots \otimes (v_1 \otimes v_2)}_{y_1\mbox{-fold}}\otimes \underbrace{(v_2 \otimes v_2) \cdots \otimes (v_2 \otimes v_2)}_{y_2\mbox{-fold}} \otimes \cdots \otimes \underbrace{(v_r \otimes v_2) \cdots \otimes (v_r \otimes v_2)}_{y_r\mbox{-fold}}). 
$$
Also, for $1 \leq i \neq j \leq n$, we have $$\g_i^l \g_j^{-l} s_{ij}(v_2 \otimes \cdots \otimes v_2) = v_2 \otimes \cdots \otimes v_2 $$ 
So, for $ 1 \leq i \leq n$,
\begin{align*}
X_i (v') & = (v_2 \otimes \cdots \otimes v_2) \otimes X_i(v_1 \otimes \cdots \otimes v_1\otimes v_2 \otimes \cdots \otimes v_2 \otimes \cdots \otimes v_r \otimes \cdots \otimes v_r) \\
& = r c(b_R(i))(v') \\
& = r c(b_{\sh(R)}(i))(v_{\sh(R)}) = X_i(v_{\sh(R)})
\end{align*}
which implies that $v' = v_{\sh(R)}.$

We have shown that $V_{\sh(R)}$ is a $GZ$-subspace of $\delta_0 \otimes V^{\La}$. Thus, $V^{\sh(\La)}$, corresponding to $r$-tuple $\sh(\La)$, is a $G(r,1,n)$-submodule of $\delta_0 \otimes V^{\La}$. The irreducibility of $\delta_0 \otimes V^{\La}$ implies the result. 

\item This part can be proved by arguments similar to those in part (a). To be able to do so, we note that a $GZ$-subspace of an irreducible representation of $L(r,1,n)$ is uniquely determined by its weight, i.e., its label and the action of Jucys-Murphy elements $X_1, X_2, \ldots, X_{n-1}$ on it. \qedhere
\end{enumerate}
\end{proof}

Lemma \ref{shl} implies Corollaries \ref{shl2} and \ref{shl1}.
Part (a) of Corollary \ref{shl2} is \cite[Corollary 25]{MRW04} and is also stated as Theorem 2.1 in \cite{Ore07}. 

\begin{cor}\label{shl2}
For $\La \in \Y(r,n)$ and $\mu^{(n,i)} \in \Y^n(r,n-1)$, the following are true for $t \in \Z:$
\begin{enumerate}[(a)]
\item As a $G(r,1,n)$-module, $$ \delta_0^t \otimes V^{\La} \cong V^{\emph{sh}^t(\La)}.$$
\item As a $L(r,1,n)$-module, $$ \delta_0^t \otimes V^{\mu^{(n,i)}} \cong V^{\emph{sh}^t(\mu^{(n,i)})}.$$
\end{enumerate}
\end{cor}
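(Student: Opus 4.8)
The plan is to deduce the statement for arbitrary $t \in \Z$ from the two base cases already established in Lemma \ref{shl}, by a short induction, after first recording that both $\delta_0$ and $\sh$ have order $r$.

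First I would treat part (a). For $t = 0$ the module $\delta_0^0$ is the trivial representation and $\sh^0$ is the identity, so $\delta_0^0 \otimes V^{\La} = V^{\La} = V^{\sh^0(\La)}$, which is the base case. For the inductive step with $t \geq 1$, I would write $\delta_0^t \otimes V^{\La} = \delta_0 \otimes (\delta_0^{t-1} \otimes V^{\La})$, apply the inductive hypothesis $\delta_0^{t-1} \otimes V^{\La} \cong V^{\sh^{t-1}(\La)}$, and then invoke Lemma \ref{shl}(a) to get $\delta_0 \otimes V^{\sh^{t-1}(\La)} \cong V^{\sh(\sh^{t-1}(\La))} = V^{\sh^t(\La)}$. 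This settles the claim for all $t \geq 0$.

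To extend to negative $t$, I would observe that $\delta_0$ has order exactly $r$ in the character group: evaluating $\delta_0^t$ on $(\zeta, 1, \ldots, 1, \mathrm{id})$ gives $\zeta^t$, which equals $1$ for every group element precisely when $r \mid t$, so $\delta_0^r$ is the trivial module (consistent with $\delta_0^m$ generating the order-$p$ group $C$). Correspondingly, $\sh$ is a cyclic permutation of the $r$ components and hence $\sh^r = \mathrm{id}$. Thus both $\delta_0^t \otimes V^{\La}$ and $V^{\sh^t(\La)}$ depend only on $t \bmod r$, and the already-proved range $0 \leq t \leq r-1$ yields the result for all $t \in \Z$.

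Finally, part (b) follows by the identical argument with $G(r,1,n)$ replaced by $L(r,1,n)$, $\La$ replaced by $\mu^{(n,i)}$, and Lemma \ref{shl}(a) replaced by Lemma \ref{shl}(b); here one uses that the restriction of $\delta_0$ to $L(r,1,n)$ still has order $r$ (the element $(\zeta,1,\ldots,1,\mathrm{id})$ lies in $L(r,1,n)$) and that the shift map on $\Y^n(r,n-1)$ is again a cyclic permutation of order $r$. Since the whole argument is a routine induction built on Lemma \ref{shl}, there is no serious obstacle; the only point needing a moment's care is the reduction of negative exponents, which the order-$r$ observation handles.
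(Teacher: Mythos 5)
Your proof is correct and takes essentially the same route as the paper, which offers no explicit argument beyond the remark that Lemma \ref{shl} implies Corollary \ref{shl2} -- i.e., iterate the $t=1$ case. Your induction for $t\geq 0$ together with the observation that $\delta_0$ and $\sh$ both have order $r$ (so everything depends only on $t \bmod r$) is precisely the standard way to fill in that iteration, including the negative exponents.
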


\begin{cor}\label{shl1}
For $t \in \Z:$
\begin{enumerate}[(a)]
\item As a $G(r,1,n)$-module, $\delta_0^t$ is parametrized by $(\emptyset, \ldots,\emptyset, (n), \emptyset, \ldots, \emptyset) \in \Y(r,n)$, where $(n)$ occurs at $(t+1)(\mbox{mod }r)$-th component. 
\item As a $L(r,1,n)$-module, $\delta_0^t$ is parametrized by the $(n,(t+1)(\mbox{mod }r))$-colored $r$-tuple $(\emptyset, \ldots,\emptyset, (n-1)^n, \emptyset, \ldots, \emptyset) \in \Y^n(r,n-1)$, where $(n-1)$ also occurs at $(t+1)(\mbox{mod }r)$-th component.
\end{enumerate}
\end{cor}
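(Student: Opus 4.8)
The plan is to deduce both parts directly from Corollary \ref{shl2} together with the parametrizations of $\delta_0$ recorded just before Lemma \ref{shl}: as a $G(r,1,n)$-module, $\delta_0$ is $V^{\La}$ with $\La = (\emptyset, (n), \emptyset, \ldots, \emptyset) \in \Y(r,n)$, and as an $L(r,1,n)$-module it is the $(n,2)$-colored tuple $(\emptyset, (n-1)^n, \emptyset, \ldots, \emptyset) \in \Y^n(r,n-1)$. The point is that $\delta_0^t$ is nothing but $\delta_0^{\,t-1}$ tensored with $\delta_0$, so it is covered by the tensoring formula of Corollary \ref{shl2} once we track the iterated shift.

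First I would write $\delta_0^t = \delta_0^{\,t-1} \otimes \delta_0$ and apply Corollary \ref{shl2}(a) with the exponent $t-1$ to the module $V^{\La}$, $\La = (\emptyset, (n), \emptyset, \ldots, \emptyset)$, obtaining $\delta_0^t \cong V^{\sh^{t-1}(\La)}$. It then remains to compute $\sh^{t-1}(\La)$. By the definition of the shift map $(\La_1, \ldots, \La_r) \mapsto (\La_r, \La_1, \ldots, \La_{r-1})$, each application of $\sh$ advances the unique nonempty component one position to the right cyclically; since the $(n)$ of $\La$ sits in position $2$, applying $t-1$ shifts carries it to position $2 + (t-1) = t+1$, read modulo $r$ under the convention that a multiple of $r$ denotes the $r$-th component. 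This is exactly the $(t+1)(\mathrm{mod}\ r)$-th slot asserted in part (a).

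For part (b) I would run the identical argument with Corollary \ref{shl2}(b), starting from the $L(r,1,n)$-parametrization of $\delta_0$, namely the $(n,2)$-colored tuple with $(n-1)$ in position $2$. Here the shift map on $\Y^n(r,n-1)$ simultaneously moves the nonempty colored component one step to the right and advances the color index from $i$ to $i+1$; hence after $t-1$ shifts both the color index and the position of the $(n-1)$ become $t+1 \equiv (t+1)(\mathrm{mod}\ r)$, giving the claimed $(n,(t+1)(\mathrm{mod}\ r))$-colored tuple.

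There is no substantive obstacle, since Corollary \ref{shl2} already performs the representation-theoretic work; the only care needed is the purely combinatorial bookkeeping of the mod-$r$ indexing and, in part (b), the synchronization of the color index with the position of the box-bearing component. As a sanity check one verifies the endpoints: at $t=0$ both formulas return the trivial-module labels $((n),\emptyset,\ldots,\emptyset)$ and the $(n,1)$-colored $((n-1)^n,\emptyset,\ldots,\emptyset)$, and at $t=1$ part (a) recovers the stated parametrization $(\emptyset, (n), \emptyset, \ldots, \emptyset)$ of $\delta_0$, confirming that the indexing conventions have been applied consistently.
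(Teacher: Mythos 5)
Your proposal is correct and follows essentially the same route as the paper, which derives Corollary \ref{shl1} directly from Corollary \ref{shl2} applied to the known parametrization of $\delta_0$; your bookkeeping of the shift (position $2+(t-1)\equiv t+1 \pmod r$, and in part (b) the synchronized advance of the color index) is accurate, and the endpoint checks at $t=0,1$ confirm the indexing convention. The only cosmetic difference is that you shift the label of $\delta_0$ by $t-1$ rather than the trivial label $((n),\emptyset,\ldots,\emptyset)$ by $t$, which yields the identical result.
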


We define a combinatorial object $(m,p)$-necklace as in \cite[p. 174]{HR98} which will be useful in parametrization of irreducible $\Grpn$-modules and $\Lrpn$-modules.

Let $\lambda = (\lambda_1, \lambda_2, \ldots, \lambda_r) \in \Y(r,n)$. For each $i$ such that $1 \leq i \leq m$, consider the $p$-tuple $$\tla_{(i)} := (\lambda_{i}, \lambda_{m+i}, \lambda_{2m+i}, \ldots, \lambda_{(p-1)m+i}).$$ Depict $\tla_{(i)}$ as a $p$-necklace in the following way: the circular necklace, with centre on the $x$-axis, has $p$ nodes  and lies in a vertical $xy$-plane with the first node $\lambda_{i}$ placed at the point, where tangent to the necklace in $(y > 0)$-half plane is parallel to the $x$-axis. The placement of nodes is done in clockwise direction with the $j$-th node being $\lambda_{(j-1)m+i}$ and placed at a clockwise angle of $2\pi/(j-1)$ with $y$-axis for $j =2,\ldots,p$. A {\it $(m,p)$-necklace} of total $n$ boxes obtained from $\La \in \Y(r,n)$, denoted by $\tla$, is a $m$-tuple  $$ \tla = (\tla_{(1)}, \tla_{(2)}, \ldots, \tla_{(m)}),$$ where $\tla_{(i)}$ is a $p$-necklace for each $1 \leq i \leq m$. For $ 1 \leq j \leq p$ and $1 \leq i \leq m$, let $\tla_{(i,j)}$ denote the $j$-th node in $\tla_{(i)}$, i.e., $\tla_{(i,j)} = \La_{(j-1)m+i}$. Thus, we have 
\begin{equation*}
 \sum\limits_{i=1}^{m}\sum\limits_{j=1}^p\tla_{(i,j)} = n.
\end{equation*}
Two $(m,p)$-necklaces, $\tla$ and $\tm$, both of total boxes $n$, are said to be {\em equivalent} if for some integer $t$, $\tla_{(i,j)} = \tm_{(i,(j+t)(mod~p))}$ for all $1 \leq j \leq p$ and $1 \leq i \leq m$. Let $\Y(m,p,n)$ denote the set of inequivalent $(m,p)$-necklaces of total $n$ boxes.

Note that for any element $\mu \in [\La] $, the stabilizer subgroup $C_\mu = C_\La$. So, the stabilizer subgroup of a representative of $[\La]$ can be written as $C_\La$ while considering $(m,p)$-necklace $\tla$.

\begin{example}\label{exn}
An example of a $(3,4)$-necklace of total $~30$ boxes obtained from $$\La = ((2,1), (3,2), (2,1,1), (1), (1,1), (1,1), (1,1,1), (2,1), (1), (2), (2), (2) ) \mbox{ is}:$$

\noindent \begin{tikzpicture}[scale=0.8,mycirc/.style={circle,fill=black, minimum size=0.1mm, inner sep = 1.5pt}]

 \node[label=above:{\tiny{$(1,1)$}}] (1) at (0,2) {$\yng(2,1)$};
 \node[label=left:{\tiny{$(1,2)$}}] (2) at (2,0) {$\yng(1)$};
 \node[label=below:{\tiny{$(1,3)$}}] (3) at (0,-2) {$\yng(1,1,1)$};
 \node[label=right:{\tiny{$(1,4)$}}] (4) at (-2,0) {$\yng(2)$};
  \draw  (1) to[bend left=30] (2);
  \draw  (2) to[bend left=30] (3);
  \draw  (3) to[bend left=30] (4);
  \draw   (4) to[bend left=30] (1);
    \node(a) at (3,-0.7) {,};
 \end{tikzpicture}
 \begin{tikzpicture}[scale=0.8,mycirc/.style={circle,fill=black, minimum size=0.1mm, inner sep = 1.5pt}]
 \node[label=above:{\tiny{$(2,1)$}}] (1) at (0,2) {$\yng(3,2)$};
 \node[label=left:{\tiny{$(2,2)$}}] (2) at (2,0) {$\yng(1,1)$};
 \node[label=below:{\tiny{$(2,3)$}}] (3) at (0,-2) {$\yng(2,1)$};
 \node[label=right:{\tiny{$(2,4)$}}] (4) at (-2,0) {$\yng(2)$};
  \draw  (1) to[bend left=30] (2);
  \draw  (2) to[bend left=30] (3);
  \draw  (3) to[bend left=30] (4);
  \draw   (4) to[bend left=30] (1);
  \node(a) at (3,-0.4) {,};
 \end{tikzpicture}
 \begin{tikzpicture}[scale=0.8,mycirc/.style={circle,fill=black, minimum size=0.1mm, inner sep = 1.5pt}]
 \node[label=above:{\tiny{$(3,1)$}}] (1) at (0,2) {$\yng(2,1,1)$};
 \node[label=left:{\tiny{$(3,2)$}}] (2) at (2,0) {$\yng(1,1)$};
 \node[label=below:{\tiny{$(3,3)$}}] (3) at (0,-2) {$\yng(1)$};
 \node[label=right:{\tiny{$(3,4)$}}] (4) at (-2,0) {$\yng(2)$};
  \draw  (1) to[bend left=30] (2);
  \draw  (2) to[bend left=30] (3);
  \draw  (3) to[bend left=30] (4);
  \draw   (4) to[bend left=30] (1);
  \node(a) at (3,0) {.};
  \end{tikzpicture}
  \end{example}

Theorem \ref{irrgp} and its proof follows the expositions in \cite{S89, MY98, BB07}.

\begin{thm}\label{irrgp}
 The irreducible $\Grpn$-modules are parametrized by the ordered pairs $(\tla, \delta)$, where $\tla \in \Y(m,p,n)$ and $\delta \in C_{\lambda}$. Given $\La \in \Y(r,n)$, the restriction of the corresponding $G(r,1,n)$-module $V^{\La}$  to $\Grpn$ has multiplicity free decomposition given as:
 $$ \mbox{\em Res}_{G(r,p,n)}^{G(r,1,n)}(V^{\La}) \; = \; \bigoplus_{\delta \in C_{\La}} V^{(\tla,\delta)}.$$ Also, for $\mu \in [\La]$, $$\mbox{\em Res}_{G(r,p,n)}^{G(r,1,n)}(V^{\La}) \; \cong \; \mbox{\em Res}_{G(r,p,n)}^{G(r,1,n)}(V^{\mu}).$$
\end{thm}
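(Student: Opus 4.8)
The plan is to apply the Clifford-theoretic package of Theorem \ref{cd} to the pair $H = G(r,1,n)$ and its index-$p$ normal subgroup $N = \Grpn$, with $C = \langle \delta_0^m \rangle \cong H/N$ the cyclic group of order $p$ acting on $H^\wedge = \Y(r,n)$ by tensoring $V^\La \mapsto \delta \otimes V^\La$. Once this setup is in place, the entire statement reduces to rendering the abstract orbit/stabilizer data of Theorem \ref{cd} explicitly in terms of the combinatorics of $(m,p)$-necklaces, so that the parametrizing sets $([\rho],\epsilon)$ of Theorem \ref{cd}(d) become the pairs $(\tla, \delta)$.

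First I would identify the $C$-action on $\Y(r,n)$. Since $C$ is generated by $\delta_0^m$ and, by Corollary \ref{shl2}(a), $\delta_0^m \otimes V^\La \cong V^{\sh^m(\La)}$, the generator of $C$ acts on $\Y(r,n)$ precisely as the $m$-fold shift $\sh^m$. Next I would match $C$-orbits with inequivalent necklaces: because $r = mp$, the permutation $\sh^m$ sends the component in position $(j-1)m + i$ to position $jm + i$ (indices mod $r$), which is exactly a clockwise rotation by one node of each $p$-necklace $\tla_{(i)}$, carried out simultaneously for all $1 \le i \le m$. This is the very equivalence defining $\Y(m,p,n)$, so the assignment $\La \mapsto \tla$ descends to a bijection between the $C$-orbits $[\La]$ in $\Y(r,n)$ and $\Y(m,p,n)$. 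Since $C_\mu = C_\La$ for every $\mu \in [\La]$ (as noted just before the theorem), the stabilizer is an invariant of the orbit, hence of the necklace, and Theorem \ref{cd}(d) then delivers the parametrization of irreducible $\Grpn$-modules by pairs $(\tla, \delta)$ with $\tla \in \Y(m,p,n)$ and $\delta \in C_\La$.

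For the restriction formula I would invoke the eigenspace decomposition of Theorem \ref{cd}(a),(b): writing $A$ for the associator of $V^\La$, the space decomposes as $V^\La \cong \bigoplus_{l=0}^{u(\La)-1} E^{(l)}$ into $u(\La) = |C_\La|$ pairwise inequivalent irreducible $\Grpn$-modules, one for each eigenvalue of $A$. Identifying the set of eigenvalues with $C_\La$ as in Theorem \ref{cd}(a) and declaring $V^{(\tla,\delta)}$ to be the eigenspace labelled by $\delta$, this reads $\mbox{Res}_{\Grpn}^{G(r,1,n)}(V^\La) = \bigoplus_{\delta \in C_\La} V^{(\tla,\delta)}$. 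The decomposition is multiplicity free precisely because the eigenspaces $E^{(l)}$ are inequivalent (equivalently, because the chain (\ref{mfic}) terminating in $\Grpn$ is multiplicity free).

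Finally, the invariance across an orbit is immediate: if $\mu \in [\La]$ then $V^\mu \cong \delta_0^{mj} \otimes V^\La$ for some integer $j$, and since $\Grpn \subseteq \mbox{Ker}(\delta_0^m)$ the character $\delta_0^{mj}$ restricts trivially to $\Grpn$, so $\mbox{Res}_{\Grpn}(V^\mu) \cong \mbox{Res}_{\Grpn}(\delta_0^{mj} \otimes V^\La) \cong \mbox{Res}_{\Grpn}(V^\La)$. I expect the only genuine obstacle to be the bookkeeping in the orbit–necklace bijection: one must verify that $\sh^m$ corresponds node-for-node to the rotation built into the definition of an $(m,p)$-necklace, and that inequivalence of necklaces matches distinctness of $C$-orbits. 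Everything else is a direct transcription of Theorem \ref{cd} and Corollary \ref{shl2}.
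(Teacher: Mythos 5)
Your argument is correct and is essentially the paper's own proof: both apply the Clifford-theoretic Theorem \ref{cd} to the pair $G(r,1,n)\supseteq \Grpn$, use Corollary \ref{shl2}(a) to identify the action of $C=\langle\delta_0^m\rangle$ with $\sh^m$ (hence the $C$-orbits with inequivalent $(m,p)$-necklaces and $|C_\La|=u(\La)=p/b(\La)$), and read off the multiplicity-free restriction from the eigenspace decomposition of the associator. (Your parenthetical appeal to the chain (\ref{mfic}) for multiplicity-freeness is not quite right, since $\Grpn$ is not a member of that chain, but it is not needed --- Theorem \ref{cd}(b) already gives inequivalence of the eigenspaces.)
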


\begin{proof}
The group $C = \langle \delta_0^m \rangle$ acts on the set of irreducible $G(r,1,n)$-modules. For $\La \in \Y(r,n)$, suppose that $[\La]$ denotes the elements in $\Y(r,n)$ which parametrize the irreducible $G(r,1,n)$-modules in the orbit of $V^{\La}$. Using Corollary \ref{shl2}(a), we have 
\begin{align*}
 [\La] & = \{{\nu} \mid \nu = \sh^{im}(\La) \mbox{ for some } i=0,1,\ldots,p-1 \}.
 \end{align*}
 Let the order of the orbit $[\La]$ be $b(\La)$. Then, the order of the stabilizer subgroup $C_{\La}$ is $u(\La) := \frac{p}{b(\La)}$. Also, $C_{\La}$ is generated by $\delta_0^{b(\La)m}$. The result follows from Theorem \ref{cd}.
\end{proof}

Given $\tm \in \Y(m,p,n-1)$, the {\em $(n,i,j)$-colored $(m,p)$-necklace}, denoted by $\tm^{(n,i,j)}$, is obtained by coloring $\tm_{(i,j)}$ by $n$, for $1 \leq i \leq m$ and $ 1 \leq j \leq p$. The colored $(m,p)$-necklaces, $\tm^{(n,i,j)}$ and $\tilde{\nu}^{(n,s,t)}$, are {\em equivalent} if and only if 
\begin{enumerate}[(i)]
\item $i=s$, and $j = (t+l) (\mbox{mod }p)$ for some $l \in \Z$, and
\item the corresponding $\tm$ and $\tilde\nu$ are equivalent as $(m,p)$-necklaces using the same $l$ as in (i), i.e., $\tm_{(a,b)} = \tilde\nu_{(a,(b+l)(mod~p))}$ for all $1 \leq a \leq m$ and $1 \leq b \leq p$.
\end{enumerate}
Let $\Y^n(m,p,n-1)$ be the set of inequivalent $(n,i,j)$-colored $(m,p)$-necklaces of total $n-1$ boxes for all $1 \leq i \leq m$, $1 \leq j \leq p$.

\begin{example}
Corresponding to the example \ref{exn}, the following is a colored $(3,4)$-necklace where we take $(i,j) = (2,3)$:

 \noindent \begin{tikzpicture}[scale=0.8,mycirc/.style={circle,fill=black, minimum size=0.1mm, inner sep = 1.5pt}]
 \node[label=above:{\tiny{$(1,1)$}}] (1) at (0,2) {$\yng(2,1)$};
 \node[label=left:{\tiny{$(1,2)$}}] (2) at (2,0) {$\yng(1)$};
 \node[label=below:{\tiny{$(1,3)$}}] (3) at (0,-2) {$\yng(1,1,1)$};
 \node[label=right:{\tiny{$(1,4)$}}] (4) at (-2,0) {$\yng(2)$};
  \draw  (1) to[bend left=30] (2);
  \draw  (2) to[bend left=30] (3);
  \draw  (3) to[bend left=30] (4);
  \draw   (4) to[bend left=30] (1);
    \node(a) at (3,-0.7) {,};
 \end{tikzpicture}
 \begin{tikzpicture}[scale=0.8,mycirc/.style={circle,fill=black, minimum size=0.1mm, inner sep = 1.5pt}]
 \node[label=above:{\tiny{$(2,1)$}}] (1) at (0,2) {$\yng(3,2)$};
 \node[label=left:{\tiny{$(2,2)$}}] (2) at (2,0) {$\yng(1,1)$};
 \node[label=below:{\tiny{$(2,3)$}}] (3) at (0,-2) {${\yng(2,1)}^{\;31}$};
 \node[label=right:{\tiny{$(2,4)$}}] (4) at (-2,0) {$\yng(2)$};
  \draw  (1) to[bend left=30] (2);
  \draw  (2) to[bend left=30] (3);
  \draw  (3) to[bend left=30] (4);
  \draw   (4) to[bend left=30] (1);
  \node(a) at (3,-0.4) {,};
 \end{tikzpicture}
 \begin{tikzpicture}[scale=0.8,mycirc/.style={circle,fill=black, minimum size=0.1mm, inner sep = 1.5pt}]
 \node[label=above:{\tiny{$(3,1)$}}] (1) at (0,2) {$\yng(2,1,1)$};
 \node[label=left:{\tiny{$(3,2)$}}] (2) at (2,0) {$\yng(1,1)$};
 \node[label=below:{\tiny{$(3,3)$}}] (3) at (0,-2) {$\yng(1)$};
 \node[label=right:{\tiny{$(3,4)$}}] (4) at (-2,0) {$\yng(2)$};
  \draw  (1) to[bend left=30] (2);
  \draw  (2) to[bend left=30] (3);
  \draw  (3) to[bend left=30] (4);
  \draw   (4) to[bend left=30] (1);
  \node(a) at (3,0) {.};
  \end{tikzpicture}
 \end{example}

By depicting $\mu = (\mu_1, \mu_2, \ldots,\mu_t^n,\ldots,\mu_r) \in \Y^n(r,n-1)$ as a $(m,p)$-necklace, we get $\tm^{(n,i,j)} \in \Y^n(m,p,n-1)$, where $t = (j-1)m+i$ for a  unique pair $(i,j)$ such that $1 \leq i \leq m$ and $1 \leq j \leq p$.
 
\begin{thm}\label{irrlp}
The irreducible $\Lrpn$-modules are parametrized by the elements of $\Y^n(m,p,n-1)$.
For $\mu^{(n,t)} \in \Y^n(r,n-1)$, the restriction of the corresponding irreducible $L(r,1,n)$-module $V^{\mu^{(n,t)}}$ to $\Lrpn$ has multiplicity free decomposition given as:
$$ \mbox{\em Res}^{L(r,1,n)}_{\Lrpn}(V^{\mu^{(n,t)}}) ~ = ~ V^{\tm^{(n,i,j)}},$$
where $t = (j-1)m+i$ for a  unique pair $(i,j)$ such that $1 \leq i \leq m$ and $1 \leq j \leq p$. Also, for any $~\nu^{(n,s)} \in [\mu^{(n,t)}]$, $$ \mbox{\em Res}^{L(r,1,n)}_{\Lrpn}(V^{\mu^{(n,t)}}) ~ \cong ~ \mbox{\em Res}^{L(r,1,n)}_{\Lrpn}(V^{\nu^{(n,s)}}).$$
\end{thm}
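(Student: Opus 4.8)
The plan is to apply Clifford theory (Theorem \ref{cd}) to the pair $H = L(r,1,n)$ and its normal subgroup $N = \Lrpn$, exactly paralleling the proof of Theorem \ref{irrgp} for the pair $G(r,1,n)$, $\Grpn$. The quotient $L(r,1,n)/\Lrpn$ is identified with $C = \langle \delta_0^m \rangle$, cyclic of order $p$, and $C$ acts on the irreducible $L(r,1,n)$-modules by $V^{\mu^{(n,t)}} \mapsto \delta_0^m \otimes V^{\mu^{(n,t)}}$. By Corollary \ref{shl2}(b) this action is realized combinatorially as $V^{\mu^{(n,t)}} \mapsto V^{\sh^m(\mu^{(n,t)})}$, so the $C$-orbit of $\mu^{(n,t)}$ is $\{\sh^{im}(\mu^{(n,t)}) \mid 0 \leq i \leq p-1\}$.

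First I would identify orbits with equivalence classes of colored necklaces. Writing a colored $r$-tuple as a colored $(m,p)$-necklace via $\tla_{(i,j)} = \La_{(j-1)m+i}$, a direct index computation shows that passing from $\La$ to $\sh^m(\La)$ rotates every constituent $p$-necklace by one node while carrying the colored node along with it. Consequently the $C$-orbit of $\mu^{(n,t)}$ is precisely the equivalence class of its colored $(m,p)$-necklace in the sense defined before the theorem, and the inequivalent colored necklaces in $\Y^n(m,p,n-1)$ are in bijection with the $C$-orbits in $\Y^n(r,n-1)$.

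The crucial point, which I expect to be the main obstacle, is to show that the stabilizer $C_{\mu^{(n,t)}}$ is always trivial, in contrast to the $\Grpn$ case, where it need not be. Since $\sh^m$ moves the colored component from position $t$ to position $t+m \pmod{r}$, a nontrivial power $\sh^{im}$ with $0 < i < p$ can fix $\mu^{(n,t)}$ only if $im \equiv 0 \pmod{r}$; because $m = \frac{r}{p}$ this forces $i \equiv 0 \pmod{p}$, impossible for $0 < i < p$. Hence $b(\mu^{(n,t)}) = p$ and $u(\mu^{(n,t)}) = 1$. Substituting $u = 1$ into Theorem \ref{cd}(b) collapses the eigenspace decomposition (\ref{esdec}) to a single summand, so $\mbox{Res}^{L(r,1,n)}_{\Lrpn}(V^{\mu^{(n,t)}})$ is a single irreducible $\Lrpn$-module, which I label $V^{\tm^{(n,i,j)}}$ with $t = (j-1)m+i$; multiplicity-freeness is then automatic. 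Theorem \ref{cd}(d), together with triviality of all stabilizers, yields the parametrization of the irreducible $\Lrpn$-modules by orbits, that is, by $\Y^n(m,p,n-1)$.

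Finally, for associates: if $\nu^{(n,s)} \in [\mu^{(n,t)}]$, then $V^{\nu^{(n,s)}} \cong \delta \otimes V^{\mu^{(n,t)}}$ for some $\delta \in C$, and since $\Lrpn \subseteq \mbox{Ker}(\delta_0^m)$ the character $\delta$ is trivial on $\Lrpn$. Restricting to $\Lrpn$ therefore gives $\mbox{Res}^{L(r,1,n)}_{\Lrpn}(V^{\nu^{(n,s)}}) \cong \mbox{Res}^{L(r,1,n)}_{\Lrpn}(V^{\mu^{(n,t)}})$, which completes the proof.
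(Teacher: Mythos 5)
Your proposal is correct and follows essentially the same route as the paper: apply Clifford theory to the pair $L(r,1,n) \supseteq \Lrpn$ with $C = \langle \delta_0^m \rangle$, realize the $C$-action via Corollary \ref{shl2}(b) as the shift map, observe that the colored node also shifts so the stabilizer is trivial and the orbit has size $p$, and conclude from Theorem \ref{cd}. Your explicit verification that $im \equiv 0 \pmod{r}$ forces $p \mid i$ just spells out the paper's terser remark that "the color is also shifting."
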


\begin{proof}
The group $C = \langle \delta_0^m \rangle$ acts on the set of irreducible $L(r,1,n)$-modules. For $\mu^{(n,t)} \in \Y^n(r,n-1)$, suppose that $[\mu^{(n,t)}]$ denotes the elements in $\Y^n(r,n-1)$ which parametrize the irreducible $L(r,1,n)$-modules in the orbit of $V^{\mu^{(n,t)}}$. Using Corollary \ref{shl2}(b), we have 
\begin{align*}
 [\mu^{(n,t)}] & = \{\omega^{(n,y)} \mid \omega^{(n,y)} = \sh^{zm}(\mu^{(n,t)}) \mbox{ for some } z=0,1,\ldots,p-1 \}.
\end{align*}

Since the color is also shifting, therefore, the number of elements in the orbit is $p$ and thus the stabilizer subgroup consists of identity element only. The results follow from Theorem \ref{cd}.
\end{proof}

{\em Branching rule from $\Grpn$ to $\Lrpn$.}
The construction of higher Specht polynomials for $\Grpn$ from the higher Specht polynomials for $G(r,1,n)$ was described in \cite{MY98} to decompose a module isomorphic to left regular $\Grpn$-module into its irreducible submodules. Applying a similar (but not identical) construction on the canonical $GZ$-bases of irreducible $G(r,1,n)$-modules obtained in Okounkov-Vershik approach in Section \ref{pre}, we construct the bases of irreducible $\Grpn$-modules in Theorem \ref{eb}. We use such constructed basis to show in Theorem \ref{gl} that the irreducible $\Grpn$-modules $V^{(\tla,\delta_1)}$ and $V^{(\tla,\delta_2)}$, for $\tla \in \Y(m,p,n)$ and $\delta_1, \delta_2 \in C_{\La}$, are isomorphic as $\Lrpn$-modules. Theorem \ref{gl} is useful in the proof of Theorem \ref{brgpl} for description of branching rule from $\Grpn$ to $\Lrpn$.

Fix $\La \in \Y(r,n)$. Define the shift map
$\sh : \Tab \longrightarrow \Tab $ 
by $$(T_1, T_2, \ldots, T_r) \mapsto (T_r, T_1, \ldots, T_{r-1}).$$

Since $C_{\La}$ is generated by $\delta_0^{mb(\La)}$, the $G(r,1,n)$-modules $V^{\La}$ and $\delta_0^{mb(\La)} \otimes V^{\La}$  are isomorphic. Suppose that $T \in \Tab$ and ${\bf 1}_{\delta_0^{mb(\La)}}$ is the basis element of one-dimensional $G(r,1,n)$-module ${\delta_0^{mb(\La)}}$. Using Corollary \ref{shl2}(a), define the $G(r,1,n)$-linear isomorphism $\mathpzc{E}: V^{\La} \longrightarrow \delta_0^{mb(\La)} \otimes V^{\La}$ by
$$ v_T \mapsto {\bf 1}_{\delta_0^{mb(\La)}} \otimes v_{\sh^{-mb(\La)}(T)}.$$
Also, the map $\mathpzc{F}: \delta_0^{mb(\La)} \otimes V^{\La} \longrightarrow V^{\La}$  given by 
$ {\bf 1}_{\delta_0^{mb(\La)}} \otimes v_T \mapsto v_T$
is a $G(r,p,n)$-linear isomorphism.

The associator of $V^{\La}$ is given by 
\begin{align}\label{at}
\mathpzc{A}^{\La} = \mathpzc{F}\mathpzc{E} : V^{\La} & \longrightarrow V^{\La} \nonumber \\
v_T & \mapsto v_{\sh^{-mb(\La)}(T)}.
\end{align}
For $h = 1, 2, \ldots, r$, we define 
\begin{equation*}
\Tab_h = \{T = (T^1, T^2, \ldots, T^r) \in \Tab \mid n \in T^{r-\nu}, 0 \leq \nu < h \}.
\end{equation*}
For $T \in \Tab_{mb(\La)}$, we get the following $u(\La)$  distinct standard $r$-Young tableaux: $$ T, \sh^{mb(\La)}(T), \sh^{2mb(\La)}(T), \ldots, \sh^{(u(\La)-1)mb(\La)}(T). $$  
An element $\delta \in C_{\La} = \langle \delta_0^{mb(\La)} \rangle $ can be identified with $\zeta^{lmb(\La)}$ for some $0 \leq l \leq u(\La)-1$.
Fixing $\delta \in C_{\La}$, we define, for each $T \in \Tab_{mb(\La)}$, 
\begin{equation*}
 v^{(\delta)}_T := \sum\limits_{t=0}^{u(\La)-1} \zeta^{tlmb(\La)}v_{\sh^{tmb(\La)}(T)}.
\end{equation*}
The linear independence of $\{v_T \mid T \in \Tab\}$ implies that $\{ v^{(\delta)}_T \mid T \in \Tab_{mb(\La)}\}$, for a fixed $\delta \in C_{\La}$, is linearly independent.  

\begin{thm}\label{eb}
 For $\La = (\La_1, \La_2, \ldots, \La_r) \in \Y(r,n)$, consider $\tla \in \Y(m,p,n)$. For each $\delta \in C_{\La}$, define $$V^{(\tla,\delta)} := \C\mbox{-}span\{v^{(\delta)}_T \mid T \in \Tab_{mb(\La)}\}.$$ The following are true:
 \begin{enumerate}[(a)]
  \item The eigenspace decomposition of $V^{\La}$ with respect to the associator $\mathpzc{A}^{\La}$ is:
  \begin{equation}
   V^{\La} = \bigoplus_{\delta \in C_{\La}}V^{(\tla,\delta)}.
  \end{equation}
 \item The eigenspace $V^{(\tla,\delta)}$, for $\delta \in C_{\La}$, is an irreducible $\Grpn$-module.
 \item The set $\{V^{(\tla,\delta)} \mid \tla \in \Y(m,p,n), \delta \in C_{\La} \}$ is the complete set of irreducible $\Grpn$-modules.
 \end{enumerate}
\end{thm}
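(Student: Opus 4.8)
The plan is to recognize the three parts of Theorem \ref{eb} as the explicit, basis-level incarnation of the abstract Clifford theory of Theorem \ref{cd}, applied to the normal subgroup $\Grpn$ of $G(r,1,n)$ with quotient $C = \langle \delta_0^m\rangle$. Once I verify that $\mathpzc{A}^\La$ from (\ref{at}) is genuinely the associator of $V^\La$ and that the explicitly constructed vectors $v^{(\delta)}_T$ span its eigenspaces, all three parts will follow by invoking Theorem \ref{cd} together with the parametrization in Theorem \ref{irrgp}.

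First I would check that $\mathpzc{A}^\La$ satisfies the defining properties of an associator. Since $\delta_0^{mb(\La)}$ generates $C_\La$ and $\sh^{mb(\La)}(\La)=\La$, Corollary \ref{shl2}(a) supplies a $G(r,1,n)$-linear isomorphism $\mathpzc{E}\colon V^\La \to \delta_0^{mb(\La)}\otimes V^\La$, while $\mathpzc{F}$ (forgetting the tensor factor) is $\Grpn$-linear; their composite is $\Grpn$-linear, and tracing $\mathpzc{A}^\La(h\,v_T)=\mathpzc{F}(h\,\mathpzc{E}(v_T))$ on the $GZ$-basis yields $\mathpzc{A}^\La(hv)=\delta_0^{mb(\La)}(h)\,h\,\mathpzc{A}^\La(v)$ for all $h\in G(r,1,n)$. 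Moreover $(\mathpzc{A}^\La)^{u(\La)}(v_T)=v_{\sh^{-u(\La)mb(\La)}(T)}=v_{\sh^{-r}(T)}=v_T$, using $u(\La)\,mb(\La)=r$ and $\sh^{r}=\mathrm{id}$ on $\Tab$, so $(\mathpzc{A}^\La)^{u(\La)}=\mathrm{id}$ and no normalization is required. This places us exactly in the hypotheses of Theorem \ref{cd}.

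Next I would pin down the eigenspaces. A direct cyclic-sum computation, using $\sh^{r}=\mathrm{id}$ and $\zeta^{r}=1$, shows $\mathpzc{A}^\La(v^{(\delta)}_T)=\zeta^{lmb(\La)}\,v^{(\delta)}_T$, where $\delta\in C_\La$ is identified with $\zeta^{lmb(\La)}=e^{2\pi i l/u(\La)}$; thus each $v^{(\delta)}_T$ lies in the eigenspace $E^{(l)}$ of Theorem \ref{cd}(a). Part (a) then reduces to showing that $\{v^{(\delta)}_T\mid T\in\Tab_{mb(\La)}\}$ is a basis of $E^{(l)}$. These vectors are already linearly independent (the paragraph preceding the theorem), so it suffices to match cardinalities: by Theorem \ref{cd}(b), $\dim E^{(l)}=\dim V^\La/u(\La)=|\Tab|/u(\La)$, and I would prove $|\Tab_{mb(\La)}|$ equals this by checking that $\Tab_{mb(\La)}$ is a transversal for the $\langle\sh^{mb(\La)}\rangle$-orbits on $\Tab$ — every such orbit has full length $u(\La)$ (the component containing $n$ runs through $u(\La)$ distinct components spaced $mb(\La)$ apart), and the window of the last $mb(\La)$ components meets each orbit in exactly one element. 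Hence $V^{(\tla,\delta)}=E^{(l)}$ and $V^\La=\bigoplus_{\delta\in C_\La}V^{(\tla,\delta)}$ is the eigenspace decomposition. Part (b) is then immediate from Theorem \ref{cd}(b), and part (c) follows by combining this with the parametrization of irreducible $\Grpn$-modules by pairs $(\tla,\delta)$ in Theorem \ref{irrgp} and Theorem \ref{cd}(d): as $\La$ ranges over orbit representatives (equivalently $\tla$ over $\Y(m,p,n)$) and $\delta$ over $C_\La$, the modules $V^{(\tla,\delta)}$ exhaust all irreducibles without repetition.

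The main obstacle I expect is the combinatorial bookkeeping behind the count $|\Tab_{mb(\La)}|=|\Tab|/u(\La)$: one must confirm both that no $\langle\sh^{mb(\La)}\rangle$-orbit on standard $r$-tableaux degenerates (which rests on the components visited by $n$ being genuinely distinct modulo $r$) and that the defining window of $\Tab_{mb(\La)}$ selects precisely one representative per orbit. Everything else — the associator identities and the eigenvalue computation — is routine once the $GZ$-basis identifications of Section \ref{ov} and Corollary \ref{shl2}(a) are in hand.
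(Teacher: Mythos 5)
Your proposal is correct and follows essentially the same route as the paper: compute the eigenvalue of $\mathpzc{A}^{\La}$ on $v^{(\delta)}_T$ to place the $V^{(\tla,\delta)}$ in distinct eigenspaces, match dimensions via $|\Tab_{mb(\La)}| = \#(\Tab)/u(\La)$ to get equality in part (a), and deduce (b) and (c) from Clifford theory (Theorem \ref{cd}) together with Theorem \ref{irrgp}. You are somewhat more explicit than the paper about verifying the associator identities and the transversal argument behind the tableau count, but these are elaborations of the same argument rather than a different approach.
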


\begin{proof}
It can be seen from the definition of the associator $\mathpzc{A}^{\La}$ in (\ref{at}) that $$\mathpzc{A}^{\La}(v^{(\delta)}_T) = \zeta^{lmb(\La)}(v^{(\delta)}_T), \mbox{ for }\delta \in C_{\La}.$$ This implies that the subspaces $V^{(\tla,\delta)}$, for $\delta \in C_{\La}$, are contained in the distinct eigenspaces of $\mathpzc{A}^{\La}$. Thus, we have 
  \begin{equation}\label{sub}	
  \bigoplus_{\delta \in C_{\La}}V^{(\tla,\delta)} \subset V^{\La}.                                                                                                                                                                                                                                                                         
  \end{equation}
 Also for each $\delta \in C_{\La}$, the dimension of $V^{(\tla,\delta)}$ is equal to the number of elements in $\Tab_{mb(\La)}$, denoted by $\#(\Tab)$. This implies that we have $$\dim V^{(\tla,\delta)} = \frac{1}{u(\La)}\#(\Tab) = \frac{1}{u(\La)}\dim V^{\La}.$$ Thus, the dimensions of both sides in (\ref{sub}) are equal which implies equality in (\ref{sub}). This proves part (a). 
 The proofs of parts (b) and (c)  follow from Clifford theory and part (a) of this theorem.
\end{proof}

\begin{thm}\label{gl}
 For a fixed $\tla \in \Y(m,p,n)$ and $\delta_1, \delta_2 \in C_{\La}$, we have
 \begin{equation*}
  \mbox{\em Res}_{L(r,p,n)}^{G(r,p,n)}(V^{(\tla,\delta_1)}) ~ \cong ~ \mbox{\em Res}_{L(r,p,n)}^{G(r,p,n)}(V^{(\tla,\delta_2)}).
 \end{equation*}
\end{thm}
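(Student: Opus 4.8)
The plan is to produce an explicit $\Lrpn$-module isomorphism between the two restrictions, realised by the action on $V^{\La}$ of a single diagonal element of $G(r,1,n)$ which lies outside $\Lrpn$ yet centralises it. Identifying $C_{\La} = \langle \delta_0^{mb(\La)} \rangle$, it suffices to treat the case $\delta_2 = \delta_0^{mb(\La)} \delta_1$ and then iterate, since $C_{\La}$ is cyclic generated by $\delta_0^{mb(\La)}$. The intertwiner I would use is $\g_n := (1, \ldots, 1, \g, (1)) \in G(r,1,n)$, the diagonal matrix with $\g$ in the $n$-th entry (note $\g_n \notin \Lrpn$ once $p > 1$, since the sum of its exponents is $1$).

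First I would verify that $\g_n$ commutes with every element of $\Lrpn = \Drpn \rtimes S_{n-1}$: being diagonal it commutes with all of $\Drpn$, and since every $\sigma \in S_{n-1}$ fixes the index $n$ we have $\sigma \g_n \sigma^{-1} = \g_n$. Hence multiplication by $\g_n$ is an $\Lrpn$-linear endomorphism of $V^{\La}$, despite $\g_n$ not belonging to $\Lrpn$. Because each $V^{(\tla,\delta)}$ is an irreducible $\Grpn$-module (Theorem \ref{eb}(b)), it is in particular $\Lrpn$-stable, so it makes sense to track where $\g_n$ sends it.

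The heart of the argument is a direct computation of $\g_n$ on the basis $\{ v^{(\delta)}_T \mid T \in \Tab_{mb(\La)} \}$ of Theorem \ref{eb}. By Theorem \ref{gzt} the element $\g_n \in G^n$ acts on a $GZ$-basis vector $v_S$ by the scalar $\g^{\,j-1}$, where $j$ is the component of the $r$-tuple $S$ containing the entry $n$ (it acts on the last tensor factor of the $G^n$-module $V_S$ through $r_S(n) = \sigma_j$, and $\sigma_j(\g) = \g^{\,j-1}$). Writing $\delta_1 = \g^{\,l_1 mb(\La)}$ and letting $j_0 = j_0(T)$ be the component of $T$ containing $n$, the shift $\sh^{t\,mb(\La)}$ moves $n$ to component $j_0 + t\,mb(\La) \pmod r$, so
$$ \g_n \cdot v^{(\delta_1)}_T = \sum_{t=0}^{u(\La)-1} \g^{\,t l_1 mb(\La)}\,\g^{\,j_0 + t\,mb(\La) - 1}\, v_{\sh^{t mb(\La)}(T)} = \g^{\,j_0 - 1}\, v^{(\delta_0^{mb(\La)}\delta_1)}_T. $$
Thus $\g_n$ sends each basis vector of $V^{(\tla,\delta_1)}$ to a nonzero scalar multiple of the corresponding basis vector of $V^{(\tla,\,\delta_0^{mb(\La)}\delta_1)}$, and therefore restricts to an $\Lrpn$-module isomorphism $V^{(\tla,\delta_1)} \xrightarrow{\sim} V^{(\tla,\,\delta_0^{mb(\La)}\delta_1)}$.

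Iterating (applying $\g_n^{\,s}$) gives $\mbox{Res}_{\Lrpn}^{\Grpn}(V^{(\tla,\delta_1)}) \cong \mbox{Res}_{\Lrpn}^{\Grpn}(V^{(\tla,\,\delta_0^{s\,mb(\La)}\delta_1)})$ for every $s$, and choosing $s$ with $\delta_0^{s\,mb(\La)}\delta_1 = \delta_2$ finishes the proof. I expect the only fiddly step to be the bookkeeping in the displayed identity, namely checking that the cyclic shift on the $r$-tuple of tableaux interacts with the eigenvalue weights $\g^{\,t l_1 mb(\La)}$ exactly so as to advance the Clifford eigenvalue of the associator $\mathpzc{A}^{\La}$ by one step; conceptually, $\g_n$ is nothing but an $\Lrpn$-equivariant operator that shifts the $\mathpzc{A}^{\La}$-eigenspaces cyclically.
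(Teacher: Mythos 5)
Your proposal is correct, but it reaches the conclusion by a different mechanism than the paper. The paper's proof is a one-line assertion: the map $\theta\colon v^{(\delta_1)}_T \mapsto v^{(\delta_2)}_T$ is declared to be an $\Lrpn$-module isomorphism, with the equivariance left to the reader. You instead realize an intertwiner as the action of the group element $\g_n=(1,\ldots,1,\g,(1))\in G(r,1,n)$, which visibly centralizes $\Lrpn=\Drpn\rtimes S_{n-1}$, so $\Lrpn$-linearity is automatic and the only real work is the eigenvalue bookkeeping. That computation is right: by Theorem \ref{gzt} the scalar of $\g_n$ on $v_S$ is $\g^{\,j-1}$ with $j$ the component of $S$ containing $n$, the shift $\sh^{tmb(\La)}$ advances that component by $tmb(\La)$, and since $u(\La)mb(\La)=r$ the exponent $l_1+1$ wraps around correctly, giving $\g_n\cdot v^{(\delta_1)}_T=\g^{\,j_0(T)-1}\,v^{(\delta_0^{mb(\La)}\delta_1)}_T$. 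Your map is therefore the paper's $\theta$ composed with an invertible diagonal rescaling depending on $T$, so the two isomorphisms are essentially the same object; what your route buys is a self-contained justification of equivariance (a centralizing element permutes the associator eigenspaces cyclically), whereas the paper's statement of $\theta$ is simpler but its equivariance is unverified in the text. One cosmetic caveat: the case $p=1$ makes $C_\La$ trivial and the statement vacuous, and for $p>1$ your observation that $\g_n\notin\Lrpn$ is exactly what makes the argument non-circular.
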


\begin{proof}
 The linear map $ \theta: V^{(\tla,\delta_1)} \longrightarrow V^{(\tla,\delta_2)} $ defined by setting
 $$ v^{(\delta_1)}_T  \mapsto v^{(\delta_2)}_T, ~ \mbox{for } T \in \Tab_{mb(\La)},$$
is an $\Lrpn$-module isomorphism.
\end{proof}

Given $\tla \in \Y(m,p,n), 1 \leq i \leq m, 1 \leq j \leq p$, let $\tla \downarrow (i,j)$ denote the set of all elements in $\Y^n(m,p,n-1)$ obtained by deleting a box from an inner corner in $\tla_{(i,j)}$ and then coloring the corresponding node by $n$. For a fixed $1 \leq i \leq m$, define $J(i) \subseteq \{1,2, \ldots,p\} $ such that for $s,t \in J(i), s \neq t$, we have  $\tla \downarrow (i,s) \bigcap \tla \downarrow (i,t) = \emptyset$. 
 If $\tla \downarrow (i,s) \bigcap \tla \downarrow (i,t) \neq \emptyset$, then $\tla \downarrow (i,s) = \tla \downarrow (i,t)$.

\begin{thm}[Branching rule from $\Grpn$ to $\Lrpn$]\label{brgpl} 
 For $\tla \in \Y(m,p,n)$ and $\delta \in C(\La)$, we have
 \begin{equation*}
\mbox{\em Res}_{L(r,p,n)}^{G(r,p,n)}(V^{(\tla, \delta)}) \cong \bigoplus_{i=1}^{m}\bigoplus_{j \in J(i)}\left(\bigoplus_{\tm^{(n,i,j)} \in \tla \downarrow (i,j)} V^{\tm^{(n,i,j)}}\right),  
 \end{equation*}
and the branching rule from $\Grpn$ to $\Lrpn$ is multiplicity free.
\end{thm}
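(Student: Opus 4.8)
The plan is to compute $\mbox{Res}_{\Lrpn}^{\Grpn}(V^{(\tla,\delta)})$ by exploiting the commuting square of subgroup inclusions
$$\Lrpn \subseteq \Grpn \subseteq G(r,1,n), \qquad \Lrpn \subseteq L(r,1,n) \subseteq G(r,1,n),$$
and evaluating the single restriction $\mbox{Res}^{G(r,1,n)}_{\Lrpn}(V^{\La})$ along both routes. Along the first route, Theorem \ref{irrgp} gives $\mbox{Res}^{G(r,1,n)}_{\Grpn}(V^\La) = \bigoplus_{\delta \in C_\La} V^{(\tla,\delta)}$; restricting further to $\Lrpn$ and invoking Theorem \ref{gl} (all these $u(\La)$ summands restrict to isomorphic $\Lrpn$-modules) yields $u(\La)$ copies of the single module $W := \mbox{Res}_{\Lrpn}^{\Grpn}(V^{(\tla,\delta)})$. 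Along the second route, Theorem \ref{brgl1} followed by Theorem \ref{irrlp} gives
$$\mbox{Res}^{G(r,1,n)}_{\Lrpn}(V^\La) = \bigoplus_{i=1}^{m}\bigoplus_{j=1}^{p}\bigoplus_{\nu \in \La \downarrow ((j-1)m+i)} V^{\tilde\nu^{(n,i,j)}},$$
after rewriting the color index $1 \le t \le r$ of the $G(r,1,n)$-branching as $t = (j-1)m+i$, so that removing a box from $\La_t = \tla_{(i,j)}$ colors node $(i,j)$ of the necklace. Equating the two computations produces $u(\La)\cdot W$ on one side and the displayed triple sum on the other.

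The core of the argument is to reorganize that triple sum and read off $W$. First I would record that, since $C_\La = \langle \delta_0^{mb(\La)}\rangle$ fixes $V^\La$, Corollary \ref{shl2}(a) forces $\sh^{mb(\La)}(\La) = \La$; as $\sh^m$ shifts the bead-index $j$ by one (modulo $p$), the necklace $\tla$ is invariant under $j \mapsto j + b(\La)$, and $b(\La)$ is the \emph{exact} period because $b(\La)$ equals the size of the orbit $[\La]$. Using this periodicity together with the equivalence relation defining $\Y^n(m,p,n-1)$, I would show that for fixed $i$ the set $\tla \downarrow (i,j)$ depends only on $j \pmod{b(\La)}$: it is unchanged when $b(\La)\mid (j-j')$ and disjoint from $\tla \downarrow (i,j')$ otherwise. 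Consequently the representative set $J(i)$ has exactly $b(\La)$ elements. Grouping the sum over $j$ by its residue modulo $b(\La)$, each distinct summand $\bigoplus_{\tm^{(n,i,j)}\in\tla\downarrow(i,j)} V^{\tm^{(n,i,j)}}$ then occurs exactly $u(\La)=p/b(\La)$ times, so the second route equals $u(\La)$ copies of $\bigoplus_{i=1}^m \bigoplus_{j\in J(i)} \bigoplus_{\tm^{(n,i,j)}\in\tla\downarrow(i,j)} V^{\tm^{(n,i,j)}}$.

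Comparing the two expressions for $\mbox{Res}^{G(r,1,n)}_{\Lrpn}(V^\La)$, both being semisimple $\Lrpn$-modules equal to $u(\La)$ copies of $W$ and of the displayed sum respectively, Krull--Schmidt lets me cancel the common factor $u(\La)$ and conclude $W \cong \bigoplus_{i=1}^m \bigoplus_{j\in J(i)} \bigoplus_{\tm^{(n,i,j)}\in\tla\downarrow(i,j)} V^{\tm^{(n,i,j)}}$, which is the asserted branching rule. For multiplicity-freeness I would argue directly on this sum: summands with different row-index $i$ are inequivalent (the colored bead sits in a different necklace), summands with the same $i$ but distinct $j, j' \in J(i)$ are inequivalent by the defining disjointness of $J(i)$, and within a fixed $\tla\downarrow(i,j)$ distinct inner corners of $\tla_{(i,j)}$ yield distinct shapes at the colored node; hence no irreducible $\Lrpn$-module is repeated.

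The main obstacle I expect is the necklace bookkeeping of the second paragraph: pinning down that the exact bead-direction period of $\tla$ is $b(\La)$, and translating the equal-or-disjoint behavior of $\tla\downarrow(i,j)$ into the \emph{uniform} multiplicity $u(\La)$, which is exactly what legitimizes the cancellation in the final step. Everything else reduces to transitivity of restriction and an appeal to semisimplicity.
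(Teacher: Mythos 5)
Your proposal is correct and follows essentially the same route as the paper: both compute $\mbox{Res}^{G(r,1,n)}_{\Lrpn}(V^{\La})$ along the two chains $G(r,1,n)\supset L(r,1,n)\supset \Lrpn$ and $G(r,1,n)\supset \Grpn\supset \Lrpn$ (via Theorems \ref{brgl1}, \ref{irrlp}, \ref{irrgp} and \ref{gl}) and cancel the common multiplicity $u(\La)$. Your write-up merely makes explicit the necklace-period bookkeeping and the multiplicity-freeness that the paper leaves implicit in the definition of $J(i)$.
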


\begin{proof}
We use the transitivity of restriction from $G(r,1,n)$ to $\Lrpn$:
$$G(r,1,n) \supset L(r,1,n) \supset L(r,p,n) \mbox{ and } G(r,1,n) \supset G(r,p,n) \supset L(r,p,n).$$ Given $\tla$, we have $ \La \in \Y(r,n)$. Considering $V^{\La}$ as $L(r,1,n)$-module, Theorem \ref{brgl1} implies that  
\begin{equation}\label{gl1v}
 \mbox{ Res}_{L(r,1,n)}^{G(r,1,n)}(V^{\La}) \cong \bigoplus_{t=1}^{r}\big(\bigoplus_{\mu \in \La \downarrow t} V^{\mu^{(n,t)}}\big).
\end{equation}
Writing $t = (j-1)m+i$, where $1 \leq i \leq m, 1 \leq j \leq p$, we note that $u(\La)$ distinct elements of $\Y^n(r,n-1)$ 
\begin{equation}\label{shu}
\mu^{(n,t)}, \sh^{mb(\La)}(\mu^{(n,t)}), \ldots, \sh^{(u(\La)-1)mb(\La)}(\mu^{(n,t)})
\end{equation}
give rise to the same $\tm^{(n,i,j)} \in \Y^n(m,p,n-1)$ and $j \in J(i)$. Also, from $\mu^{(n,s)}$ (not in (\ref{shu})) such that $s= (y-1)m+i$, where $ 1 \leq y \leq p$, we get $\tm^{(n,i,y)} \in \Y^n(m,p,n-1)$, not equivalent to $\tm^{(n,i,j)}$, and thus $y \in J(i)$ and $y \neq j$. Restricting $V^{\La}$ as $\Lrpn$-module in (\ref{gl1v}), Theorem \ref{irrlp} implies that  
\begin{equation}\label{glla}
 \mbox{ Res}_{\Lrpn}^{G(r,1,n)}(V^{\La}) \cong \left(\bigoplus_{i=1}^{m}\bigoplus_{j \in J(i)}\left(\bigoplus_{\tm^{(n,i,j)} \in \lambda \downarrow (i,j)} V^{\tm^{(n,i,j)}}\right)\right)^{\oplus u(\La)}.
\end{equation}

Considering $V^{\La}$ as $\Grpn$-module, Theorem \ref{irrgp} implies that 
\begin{equation}\label{a1}
\mbox{ Res}_{\Grpn}^{G(r,1,n)}(V^{\La}) = \bigoplus_{\delta \in C_{\La}} V^{(\tla,\delta)}.
\end{equation}
Further restricting $V^{\La}$ as $\Lrpn$-module in (\ref{a1}), using Theorem \ref{gl} and the order of $C_{\La}$ being $u(\La)$, we get  
\begin{equation}\label{ggl}
\mbox{Res}_{\Lrpn}^{G(r,1,n)}(V^{\La}) \cong  \bigg(\mbox{Res}_{\Lrpn}^{\Grpn}(V^{(\tla,\delta)})\bigg)^{\oplus u(\La)},
\end{equation}
where $\delta \in C_{\La}.$
The result follows from (\ref{glla}) and (\ref{ggl}). 
\end{proof}

\section{Schur-Weyl duality for Tanabe algebras}\label{swdu}
Let $V = \C^n$ be the $n$-dimensional vector space with standard basis $\{v_1, v_2, \ldots, v_n\}$. There is a natural action of $GL_n(\C)$ on $V$. For $k \in \Z_{\geq 0}$, consider the $k$-fold tensor product $\Vk = V \otimes V \otimes \cdots \otimes V$ with the basis $$\{v_{i_1} \otimes v_{i_2} \otimes \cdots \otimes v_{i_k} \mid 1 \leq i_1, i_2, \ldots, i_k \leq n \}.$$ With respect to this basis,  $F \in \mbox{End}(\Vk)$ can be written as a matrix $\left(F_{i_{1'}, \cdots, i_{k'}}^{i_1, \cdots, i_k}\right)$ such that 
$$(\vi)F = \sum\limits_{1 \leq i_{1'}, i_{2'}, \ldots, i_{k'} \leq n} F_{i_{1'}, \cdots, i_{k'}}^{i_1, \cdots, i_k} (v_{i_{1'}} \otimes v_{i_{2'}} \otimes \cdots \otimes v_{i_{k'}}).$$
The action of $GL_n(\C)$ on $\Vk$ is given by $$g(v_{i_1} \otimes v_{i_2} \otimes \cdots \otimes v_{i_k}) = g v_{i_1} \otimes g v_{i_2} \otimes \cdots \otimes g v_{i_k}$$ for $ g \in GL_n(\C)$ and $v_{i_1} \otimes v_{i_2} \otimes \cdots \otimes v_{i_k} \in \Vk$. The symmetric group $S_n$ can be identified with the subgroup of permutation matrices of  $GL_n(\C)$. Also, we can identify the subgroup $S_{n-1}$ of $S_n$ fixing $n$ with the subgroup of of permutation matrices having $(n,n)$-th entry as $1$ of $GL_n(\C)$. The action of $S_n$ on $\Vk$ is given by the restriction of the action of $GL_n(\C)$ to $S_n$.  Define $\Vkh := V^{\otimes k} \otimes v_n$, a subspace of $\Vkk$, which is isomorphic to $\Vk$ as a $S_{n-1}$-module.

Define a map 
\begin{align*}
\phi_k : ~   \C A_k(n) & \rightarrow  \mbox{End}(V^{\otimes k}) \\
         d & \mapsto  \phi_k(d)                 
\end{align*}
such that for $d \in A_k$ and for $1 \leq i_1, i_2, \ldots, i_k, i_{1'}, i_{2'}, \ldots, i_{k'} \leq n$, 
$$(\vi)(\phi_k(d)) = \sum\limits_{1\leq i_{1'}, i_{2'}, \ldots, i_{k'} \leq n}(\phi_k(d))^{i_1,i_2, \ldots, i_k}_{i_{1'}, i_{2'}, \ldots, i_{k'}}(\vip)$$
where
\begin{equation}\label{dcoeff}
 (\phi_k(d))^{i_1,i_2, \ldots, i_k}_{i_{1'}, i_{2'}, \ldots, i_{k'}} =
           \begin{cases}
	    1, & \mbox{ if } i_r = i_s \mbox{ when } r \mbox{ and } s \mbox{ are} \\ 
	    &\mbox{ in the same block of } d, \\ 
           0, & \mbox{ otherwise.} 
         \end{cases}
\end{equation}
This defines a right action of $\C A_k(n)$ on $\Vk$ as:
$$(\vi)d \; := \; (\vi)(\phi_k(d)).$$ 
It follows from (\ref{xd}) and (\ref{dcoeff}) that for all $d \in A_k$, 
\begin{align}\label{xdcoeff}
 (\phi_k(x_d))^{i_1,i_2, \ldots, i_k}_{i_{1'}, i_{2'}, \ldots, i_{k'}} =
 \begin{cases}
    1, & \mbox{ if } i_r = i_s \mbox{ if and only if } r \mbox{ and } s \\ 
    &  \mbox{ are in the same block of } d, \\ 
           0, & \mbox{ otherwise.} 
 \end{cases}
\end{align}
The action of the partition algebra $\C A_{k+\frac{1}{2}}(n)$ on $V^{\otimes (k+\frac{1}{2})}$ is $$\phi_{k+\frac{1}{2}}: \C A_{k+\frac{1}{2}}(n) \longrightarrow \mbox{End}(V^{\otimes (k+\frac{1}{2})})$$ given by $\phi_{k+\frac{1}{2}} = {\phi_{k+1}}_{|_{\C A_{k+\frac{1}{2}}(n)}}$.

The following theorem is \cite[Theorem 3.6]{HR05} which shows that $\C A_k(n)$ and $\C A_{k+\frac{1}{2}}(n)$ are in Schur-Weyl duality with $S_n$ and $S_{n-1}$ acting on $\Vk$ and $\Vkh$ respectively.

\begin{thm}\phantomsection\label{oswd}
\begin{enumerate}[(a)]
\item The image of the map $\phi_k : ~   \C A_k(n)  \rightarrow  \mbox{End}(\Vk)$ is $\mbox{End}_{S_n}(\Vk)$ and the kernel is given by $\C \mbox{-}span \{x_d \mid d \mbox{ has more than } n \mbox{ blocks}\}$. Thus, the partition algebra $\C A_k(n)$ is isomorphic to $\mbox{End}_{S_n}(\Vk)$ if and only if $n \geq 2k$. 
 
\item The image of the map $\phi_{k+\frac{1}{2}} : ~   \C A_{k+\frac{1}{2}}(n)  \rightarrow \mbox{End}(\Vkh)$ is $\mbox{End}_{S_{n-1}}(\Vkh)$ and the kernel is given by 
$\C\mbox{-}span \{x_d \mid d \mbox{ has more than } n \mbox{ blocks}\}$. Thus, the partition algebra $\C A_{k+\frac{1}{2}}(n)$ is isomorphic to $\mbox{End}_{S_{n-1}}(\Vkh)$ if and only if $n \geq 2k+1$.  
\end{enumerate}
\end{thm}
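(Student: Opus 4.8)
The plan is to prove part (a) by identifying the image of $\phi_k$ with $\mbox{End}_{S_n}(\Vk)$ through the orbit-indicator interpretation of the transformed basis $\{x_d\}$, and then to read off the kernel and the isomorphism criterion from block counts. First I would check the easy inclusion $\mbox{Image}(\phi_k) \subseteq \mbox{End}_{S_n}(\Vk)$: for $\sigma \in S_n$ the action sends $\vi$ to $v_{\sigma(i_1)} \otimes \cdots \otimes v_{\sigma(i_k)}$, and since the matrix entry in (\ref{dcoeff}) depends only on the pattern of equalities among the indices $i_1, \ldots, i_k, i_{1'}, \ldots, i_{k'}$ — a pattern unchanged by applying $\sigma$ simultaneously to top and bottom indices — each $\phi_k(d)$, and hence each $\phi_k(x_d)$, commutes with the $S_n$-action.

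For the reverse inclusion I would use the defining formula (\ref{xdcoeff}) for the basis $\{x_d\}$. An element $F \in \mbox{End}_{S_n}(\Vk)$ is determined by its matrix entries $F^{i_1, \ldots, i_k}_{i_{1'}, \ldots, i_{k'}}$, and $S_n$-equivariance forces these entries to be constant on the orbits of the diagonal $S_n$-action on pairs $((i_1,\ldots,i_k),(i_{1'},\ldots,i_{k'})) \in \{1,\ldots,n\}^{2k}$. Two such index pairs lie in the same orbit precisely when they induce the same set partition of $\{1, 2, \ldots, k, 1', 2', \ldots, k'\}$ (grouping positions with equal index values together), and a set partition $d$ arises in this way if and only if it has at most $n$ blocks, since realizing it requires $|d|$ distinct values drawn from $\{1,\ldots,n\}$. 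By (\ref{xdcoeff}), $\phi_k(x_d)$ is exactly the indicator matrix of the orbit labelled by $d$. Therefore $\{\phi_k(x_d) \mid d \in A_k,\ |d| \leq n\}$ is a basis of $\mbox{End}_{S_n}(\Vk)$, which proves $\mbox{Image}(\phi_k) = \mbox{End}_{S_n}(\Vk)$.

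The kernel and the isomorphism criterion then follow by counting. The same realizability argument shows $\phi_k(x_d) = 0$ whenever $|d| > n$, because no assignment of values in $\{1, \ldots, n\}$ can produce more than $n$ distinct classes; as $\{x_d \mid d \in A_k\}$ is a basis of $\C A_k(n)$ and the images $\{\phi_k(x_d) \mid |d| \leq n\}$ are linearly independent orbit indicators, the kernel is exactly $\C\mbox{-span}\{x_d \mid |d| > n\}$. Since a set partition of the $2k$-element set $\{1, \ldots, k, 1', \ldots, k'\}$ has at most $2k$ blocks (all singletons), there exists $d$ with $|d| > n$ if and only if $n < 2k$; hence the kernel vanishes, and $\phi_k$ is an isomorphism, precisely when $n \geq 2k$.

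For part (b) I would run the identical argument inside the subspace $\Vkh = \Vk \otimes v_n$ of $\Vkk$, with $S_{n-1}$ replacing $S_n$. The relevant orbits are those of the diagonal $S_{n-1}$-action, and since $S_{n-1}$ fixes $n$, the value $n$ is now distinguished: an $S_{n-1}$-orbit records not only the equality pattern but also exactly which positions carry the value $n$. These orbits correspond to partitions $d \in A_{k+\frac{1}{2}} \subseteq A_{k+1}$, where the block containing the forced pair $\{k+1, (k+1)'\}$ is precisely the distinguished block of positions equal to $n$; the remaining $|d|-1$ blocks then take distinct values in $\{1, \ldots, n-1\}$, so $d$ is realizable if and only if $|d| \leq n$. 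As before $\phi_{k+\frac{1}{2}}(x_d)$ is the corresponding orbit indicator, giving image equal to $\mbox{End}_{S_{n-1}}(\Vkh)$ and kernel $\C\mbox{-span}\{x_d \mid d \in A_{k+\frac{1}{2}},\ |d| > n\}$. Here a partition in $A_{k+\frac{1}{2}}$ has at most $2k+1$ blocks (the forced block $\{k+1,(k+1)'\}$ together with the remaining $2k$ positions as singletons), so $\phi_{k+\frac{1}{2}}$ is an isomorphism exactly when $n \geq 2k+1$. The main obstacle I anticipate is the bookkeeping in part (b): verifying carefully that restricting $\phi_{k+1}$ to $\C A_{k+\frac{1}{2}}(n)$ and to the subspace $\Vkh$ matches $S_{n-1}$-orbits with the correct combinatorial data — in particular that the distinguished $n$-block corresponds exactly to the block of $\{k+1, (k+1)'\}$, and that the sharp block-count bound $2k+1$ yields the threshold $n \geq 2k+1$.
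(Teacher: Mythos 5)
Your argument is correct. Note that the paper does not prove this statement at all: it is quoted verbatim as \cite[Theorem 3.6]{HR05}, and the orbit-indicator argument you give — matrix entries of an $S_n$-equivariant map are constant on diagonal $S_n$-orbits of index pairs, such orbits are labelled by set partitions of the $2k$ positions with at most $n$ blocks, and $\phi_k(x_d)$ is exactly the indicator of the orbit labelled by $d$ by (\ref{xdcoeff}) — is precisely the standard proof in the literature, including the half-integer case where the fixed value $n$ singles out the block of $\{k+1,(k+1)'\}$ and shifts the threshold to $n\geq 2k+1$. The one point worth writing out explicitly in part (b) is that $\phi_{k+1}(x_d)$ for $d\in A_{k+\frac{1}{2}}$ genuinely preserves the subspace $\Vkh$ (because $k+1$ and $(k+1)'$ lie in a common block, the output index $i_{(k+1)'}$ is forced to equal $n$), but you have correctly identified this as routine bookkeeping.
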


Let $\Pi_k(r,n)$ and $\Pi_{k+\frac{1}{2}}(r,n)$ be subsets of $\Pi_k(r)$ and $\Pi_{k+\frac{1}{2}}(r)$ (defined in Section \ref{ta}) respectively consisting of those elements which have at most $n$ blocks. Define
\begin{align*}
 & \Pi_k(r,p,n) := \Pi_k(r,n) \bigcup \Lambda_k(r,p,n), \mbox{ and } \\ 
 & \Pi_{k+\frac{1}{2}}(r,p,n) := \Pi_{k+\frac{1}{2}}(r,n) \bigcup \Lambda_{k+\frac{1}{2}}(r,p,n), 
\end{align*}
subsets of $A_k(r,p,n)$ and $A_{k+\frac{1}{2}}(r,p,n)$ respectively.

The actions of $\Grpn$ and $\Lrpn$ on $V$ are given by restrictions of the action of $GL_n(\C)$ on $V$. Also, $V$ is the reflection representation of $G(r,p,n)$. We note that $\C$-span$\{v_n\}$ is a $\Lrpn$-invariant subspace of $V$.
The following lemma gives bases of the centralizer algebras of the diagonal actions of $\Grpn$ and $\Lrpn$ on $\Vk$ and $\Vkh$ respectively. Part (a) is \cite[Lemma 2.1]{T97} and we follow the proof there to prove part (b) here.

\begin{lemma}\phantomsection\label{ca}
\begin{enumerate}[(a)]
\item $ \{\phi_k(x_d)) \mid d \in \Pi_k(r,p,n)\}$ is a basis of $\mbox{End}_{\Grpn}(V^{\otimes k})$.
\item  $ \{\phi_{k+\frac{1}{2}}(x_d) \mid d \in \Pi_{k+\frac{1}{2}}(r,p,n)\}$ is a basis of $\mbox{End}_{\Lrpn}(V^{\otimes ({k+\frac{1}{2}})})$. 
\end{enumerate}
\end{lemma}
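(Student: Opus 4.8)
The plan is to exploit that $\Lrpn = \Drpn \rtimes S_{n-1}$ is generated by its diagonal subgroup $\Drpn$ and by $S_{n-1}$, so that
$$\mbox{End}_{\Lrpn}(\Vkh) = \mbox{End}_{S_{n-1}}(\Vkh) \cap \mbox{End}_{\Drpn}(\Vkh).$$
This mirrors Tanabe's splitting in part (a), the only changes being that $S_{n-1}$ replaces $S_n$ and that the last tensor slot of $\Vkh = \Vk \otimes v_n$ is frozen at the basis vector $v_n$. First I would invoke Theorem \ref{oswd}(b): the image of $\phi_{k+\frac{1}{2}}$ is $\mbox{End}_{S_{n-1}}(\Vkh)$ with kernel spanned by the $x_d$ having more than $n$ blocks, so $\{\phi_{k+\frac{1}{2}}(x_d) \mid d \in A_{k+\frac{1}{2}}, |d| \leq n\}$ is already a basis of the $S_{n-1}$-centralizer. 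It then remains to intersect this space with the $\Drpn$-invariants.

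The key structural observation is that the operators $\phi_{k+\frac{1}{2}}(x_d)$ have pairwise disjoint supports: by (\ref{xdcoeff}) the matrix entry of $\phi_{k+\frac{1}{2}}(x_d)$ at an index pair $(I,I')$ is nonzero precisely when the exact equality pattern of $(I,I')$ equals $d$, and every index pair has a unique such pattern. Conjugation by a diagonal element $t = \mbox{diag}(\g^{a_1}, \ldots, \g^{a_n}) \in \Drpn$ rescales the entry at $(I,I')$ by $\g^{\,e(I,I')}$, where $e(I,I') = \sum_s a_{i_s} - \sum_{s'} a_{i_{s'}}$. Because the supports are disjoint, a linear combination $\sum_d c_d \phi_{k+\frac{1}{2}}(x_d)$ is $\Drpn$-invariant if and only if each $\phi_{k+\frac{1}{2}}(x_d)$ with $c_d \neq 0$ is itself $\Drpn$-invariant; there is no cancellation across different $d$. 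Thus I only need to decide, diagram by diagram, when $\phi_{k+\frac{1}{2}}(x_d)$ is fixed by all of $\Drpn$.

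Fix $d \in A_{k+\frac{1}{2}}$ with $|d| \leq n$ and let $B_0$ be the block containing $k+1$ and $(k+1)'$. On $\Vkh$ the last slot is pinned to $v_n$, which forces the value $n$ onto $B_0$, while the remaining blocks receive distinct values from $\{1, \ldots, n-1\}$. For a fixed admissible assignment $c$ of values to blocks, $e(I,I') = \sum_B (N(B) - M(B))\, a_{c(B)}$, and $\Drpn$-invariance of $\phi_{k+\frac{1}{2}}(x_d)$ means this exponent must vanish mod $r$ for every assignment $c$ and every $(a_1, \ldots, a_n)$ subject only to $a_1 + \cdots + a_n \equiv 0 \pmod p$. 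Evaluating this linear condition on the generators $e_u - e_v$ and $p\,e_1$ of the subgroup $\{a : \sum_j a_j \equiv 0 \pmod p\}$ shows it is equivalent to: all the differences $N(B) - M(B)$ are congruent to a common constant mod $r$, and that constant is $\equiv 0 \pmod m$. Here the main subtlety enters through the pinning of $B_0$: if $|d| < n$ some value in $\{1, \ldots, n-1\}$ is left unused, its coefficient is $0$, and the common constant is forced to be $0 \pmod r$, landing $d$ in $\Pi_{k+\frac{1}{2}}(r,n)$; only when $|d| = n$, so that every value is used, can the constant be nonzero mod $r$ (while $\equiv 0 \pmod m$), and this is exactly the defining condition of $\Lambda_{k+\frac{1}{2}}(r,p,n)$.

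This identifies the $\Drpn$-invariant members of the basis with exactly the diagrams in $\Pi_{k+\frac{1}{2}}(r,n) \cup \Lambda_{k+\frac{1}{2}}(r,p,n) = \Pi_{k+\frac{1}{2}}(r,p,n)$. Since these operators form a subset of a basis of $\mbox{End}_{S_{n-1}}(\Vkh)$ they are linearly independent, and by the disjoint-support argument they span the intersection $\mbox{End}_{\Lrpn}(\Vkh)$, which completes the proof. I expect the bookkeeping around the frozen block $B_0$ to be the only genuinely delicate step, since it is precisely what separates the case of fewer than $n$ blocks (forcing $\Pi_{k+\frac{1}{2}}$) from the case of $n$ blocks (allowing $\Lambda_{k+\frac{1}{2}}$); everything else runs parallel to Tanabe's argument for part (a).
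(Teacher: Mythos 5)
Your proposal is correct and follows essentially the same route as the paper: both reduce to $\mbox{End}_{\Lrpn}(\Vkh) = \mbox{End}_{S_{n-1}}(\Vkh) \cap \mbox{End}_{\Drpn}(\Vkh)$, invoke Theorem \ref{oswd}(b) for the $S_{n-1}$-centralizer, and then test invariance under the diagonal generators (your $e_u - e_v$ and $p\,e_1$ are exactly the paper's $h_{ij}$ and $t_i$) to derive the congruences $N(B_i)\equiv M(B_i) \pmod m$ and $N(B_i)-M(B_i)\equiv N(B_j)-M(B_j) \pmod r$, with the same dichotomy between fewer than $n$ blocks (forcing $\Pi_{k+\frac{1}{2}}(r,n)$) and exactly $n$ nonempty blocks (allowing $\Lambda_{k+\frac{1}{2}}(r,p,n)$). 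The only cosmetic difference is that you make the disjoint-support observation explicit, whereas the paper uses it implicitly by arguing coefficient by coefficient.
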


\begin{proof} (b) An element $d \in \Pi_{k+\frac{1}{2}}(r,p,n)$ has at most $n$ blocks. By part (b) of Theorem \ref{oswd}, $\phi_{k+\frac{1}{2}}(x_d) \neq 0$. Also, $$ \{\phi_{k+\frac{1}{2}}(x_d) \mid d \in \Pi_{k+\frac{1}{2}}(r,p,n)\} \subset \{\phi_{k+\frac{1}{2}}(x_d) \mid d \in A_{k+\frac{1}{2}}\}$$ is a linearly independent set.

Since $S_{n-1}$ is a subgroup of $\Lrpn$, thus we have $$\mbox{End}_{\Lrpn}(V^{\otimes ({k+\frac{1}{2}})}) \subset \mbox{End}_{S_{n-1}}(V^{\otimes ({k+\frac{1}{2}})}).$$
Choose $0 \neq F \in \mbox{End}_{\Lrpn}(V^{\otimes ({k+\frac{1}{2}})})$. Then, $F$ can be written as 
 \begin{align*}
  F  & = \sum\limits_{d \in A_{k+\frac{1}{2}}}a_d \phi_{k+\frac{1}{2}}(x_d)\\
     & = \sum\limits_{d \in \Pi_{k+\frac{1}{2}}(r,p,n)}a_d \phi_{k+\frac{1}{2}}(x_d)
         + \sum_{\substack{d \in A_{k+\frac{1}{2}}, \\ d \notin \Pi_{k+\frac{1}{2}}(r,p,n)}}a_d \phi_{k+\frac{1}{2}}(x_d).
 \end{align*}
with $\phi_{k+\frac{1}{2}}(x_d) \neq 0$ and $a_d \neq 0$ for some $d \in A_{k+\frac{1}{2}}.$
Fix such a $d \in A_{k+\frac{1}{2}}$ and let $1 \leq i_1, \ldots,i_k, i_{1'}, \ldots, i_{k'} \leq n$ with $i_{k+1}=i_{(k+1)'}=n$ such that $$(\phi_k(x_d))^{i_1,i_2, \ldots, i_k,n}_{i_{1'}, i_{2'}, \ldots, i_{k'},n} = 1.$$
For $1 \leq u \leq n$, define $$B_u := \{j \in \{1, \ldots, k+1, 1',\ldots, (k+1)'\} \mid i_j = u \}.$$ Note that $d = (B_1, B_2, \ldots, B_n)$, where some of the blocks $B_1, B_2, \ldots, B_{n-1}$ may be empty and $\{k+1, (k+1)'\} \subseteq B_n$.

For $ 1 \leq i \leq n$, define $$t_i := (1, \ldots, 1, \zeta^p, 1, \ldots, 1),$$ where $\zeta^p$ is $i$-th component, and for $ 1 \leq i \neq j \leq n$, define $$h_{ij} := (1, \ldots, 1, \zeta, 1, \ldots, 1, \zeta^{-1}, 1, \ldots, 1),$$  where $\zeta$ and $\zeta^{-1}$ are $i$-th and $j$-th components respectively.  The elements $t_i$, for $1 \leq i \leq n$, and the elements $h_{ij}$, for $1 \leq i \neq j \leq n$, together generate $\Drpn$.

For $1 \leq i \leq n$,
$$t_i^{-1} F t_i (v_{i_1} \otimes v_{i_2} \otimes \cdots \otimes v_{i_k}\otimes v_{i_{k+1}}) =  (v_{i_1} \otimes v_{i_2} \otimes \cdots \otimes v_{i_k}\otimes v_{i_{k+1}})F$$
implies that 
\begin{align}\label{ti}
 & \sum\limits_{i_{1'}, i_{2'}, \ldots, i_{k'}} \zeta^{p(N(B_i)-M(B_i))} F_{i_{1'}, \cdots, i_{k'},i_{(k+1)'}}^{i_1, \cdots, i_k,i_{k+1}} (v_{i_{1'}} \otimes v_{i_{2'}} \otimes \cdots \otimes v_{i_{k'}}\otimes v_{i_{(k+1)'}}) \nonumber \\ 
 & = \sum\limits_{i_{1'}, i_{2'}, \ldots, i_{k'}} F_{i_{1'}, \cdots, i_{k'},i_{(k+1)'}}^{i_1, \cdots, i_k,i_{k+1}} (v_{i_{1'}} \otimes v_{i_{2'}} \otimes \cdots \otimes v_{i_{k'}}\otimes v_{i_{(k+1)'}}).
\end{align}
For $1 \leq i \neq j \leq n$,
$$h_{ij}^{-1} F h_{ij} (v_{i_1} \otimes v_{i_2} \otimes \cdots \otimes v_{i_k}\otimes v_{i_{k+1}}) =  (v_{i_1} \otimes v_{i_2} \otimes \cdots \otimes v_{i_k}\otimes v_{i_{k+1}})F$$
implies that 
\begin{align}\label{hij}
 & \sum\limits_{i_{1'}, i_{2'}, \ldots, i_{k'}} \zeta^{(N(B_i)-M(B_i))-(N(B_j)-M(B_j))} F_{i_{1'}, \cdots, i_{k'},i_{(k+1)'}}^{i_1, \cdots, i_k,i_{k+1}} (v_{i_{1'}} \otimes v_{i_{2'}} \otimes \cdots \otimes v_{i_{k'}}\otimes v_{i_{(k+1)'}}) \nonumber \\ 
 & = \sum\limits_{i_{1'}, i_{2'}, \ldots, i_{k'}} F_{i_{1'}, \cdots, i_{k'},i_{(k+1)'}}^{i_1, \cdots, i_k,i_{k+1}} (v_{i_{1'}} \otimes v_{i_{2'}} \otimes \cdots \otimes v_{i_{k'}}\otimes v_{i_{(k+1)'}}).
\end{align} 
From (\ref{ti}) and (\ref{hij}) we have
\begin{align}
 & N(B_i)  \equiv M(B_i)(\mbox{mod }m), \mbox{ for } 1 \leq i \leq n, \label{nbmbm} \\
 & N(B_i) - M(B_i) \equiv N(B_j)- M(B_j) (\mbox{mod }r), \mbox{ for } 1 \leq i \neq j
 \leq n. \label{nbmbr}
\end{align}

The following two cases arise.
\begin{enumerate}[(i)]
 \item If $N(B_1)  \equiv M(B_1) (\mbox{mod }r)$, then (\ref{nbmbr}) implies that  $N(B_i)  \equiv M(B_i) (\mbox{mod }r)$ for all $1 \leq i \leq n$. So, we have $$d \in \Pi_{k+\frac{1}{2}}(r,n).$$ 
 \item If $N(B_1)  \not\equiv M(B_1) (\mbox{mod }r)$, then (\ref{nbmbr}) implies that $N(B_i)  \not\equiv M(B_i)(\mbox{mod }r)$ for all $1 \leq i \leq n$. Thus, the number of elements, $ N(B_i) + M(B_i)$, in the block $B_i$ is  nonzero for all $1 \leq i \leq n$. So,  all the $n$ blocks, $B_1, \ldots, B_n$, in $d$ are nonempty. Along with (\ref{nbmbm}), we get $d \in \Lambda_{k+\frac{1}{2}}(r,p,n)$. 
\end{enumerate}
Combining both the cases we get that $d \in \Pi_{k+\frac{1}{2}}(r,p,n)$.
\end{proof} 

Recall from Section \ref{ta} that Tanabe algebras $\Tn$ and $\Thn$ are subalgebras of partition algebras $\C A_k(n)$ and $\C A_{k+\frac{1}{2}}(n)$ respectively. The actions of $\Tn$ and $\Thn$ on $\Vk$ and $\Vkh$ respectively are given by: 
\begin{align*}
 & \psi_k: ~ \Tn  \longrightarrow \mbox{End}(\Vk) \mbox{ with } \psi_k := {\phi_k}_{|_{\Tn}},\\
  \mbox{and } & \psi_{k+\frac{1}{2}} : ~ \Thn \longrightarrow \mbox{End}(\Vkh) \mbox{ with } \psi_{k+\frac{1}{2}} := {\phi}_{{k+\frac{1}{2}}{|_{\Thn}}}.
\end{align*}

The next theorem shows that $\Tn$ and $\Thn$ are in Schur-Weyl duality with $\Grpn$ and $\Lrpn$ acting on $\Vk$ and $\Vkh$ respectively.

\begin{thm}\phantomsection\label{swd}
 \begin{enumerate}[(a)]
\item The image of the map $ \psi_k: \Tn \rightarrow \mbox{End}(V^{\otimes k}) $ is $ \mbox{End}_{\Grpn}(V^{\otimes k})$ and the kernel is given by $ \C\mbox{-}span\{ x_d \mid d \in A_k(r,p,n) \mbox{ has more than } n \mbox{ blocks}\}.$ Thus, Tanabe algebra $\Tn$ is isomorphic to $\mbox{End}_{\Grpn}(V^{\otimes k})$ if and only if $n \geq 2k$.

\item The image of the map $ \psi_{k+\frac{1}{2}}: \Thn \rightarrow \mbox{End}(V^{\otimes ({k+\frac{1}{2}})})$ is $ \mbox{End}_{\Lrpn}(V^{\otimes ({k+\frac{1}{2}})})$ and the kernel is given by $ \C\mbox{-}span\{ x_d \mid d \in A_{k+\frac{1}{2}}(r,p,n) \mbox{ has more than } n \mbox{ blocks}\}.$ Thus, Tanabe algebra $\Thn$ is isomorphic to $\mbox{End}_{\Lrpn}(V^{\otimes ({k+\frac{1}{2}})})$ if and only if $n \geq 2k+1$.
\end{enumerate}
\end{thm}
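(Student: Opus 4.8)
The plan is to analyze the restricted map $\psi_k = \phi_k|_{\Tn}$ directly on the distinguished basis $\{x_d \mid d \in A_k(r,p,n)\}$ of $\Tn$, exploiting two facts established earlier. The first is the partition-algebra duality of Theorem \ref{oswd}(a), which records that $\phi_k(x_d) = 0$ precisely when $d$ has more than $n$ blocks and that $\mbox{im}\,\phi_k = \mbox{End}_{S_n}(V^{\otimes k})$. The second is Tanabe's description in Lemma \ref{ca}(a) of $\{\phi_k(x_d) \mid d \in \Pi_k(r,p,n)\}$ as a basis of $\mbox{End}_{\Grpn}(V^{\otimes k})$. Since $\psi_k(x_d) = \phi_k(x_d)$ by definition, the whole statement reduces to sorting the diagrams of $A_k(r,p,n)$ by their number of blocks. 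Part (b) then runs identically, replacing Theorem \ref{oswd}(a) and Lemma \ref{ca}(a) by their half-integer counterparts.

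First I would partition $A_k(r,p,n) = \Pi_k(r) \cup \Lambda_k(r,p,n) \cup \Theta_k(r,p,n)$ by block count. By definition every element of $\Lambda_k(r,p,n)$ has exactly $n$ blocks and every element of $\Theta_k(r,p,n)$ has strictly more than $n$, so the diagrams of $A_k(r,p,n)$ with at most $n$ blocks are exactly $\Pi_k(r,n) \cup \Lambda_k(r,p,n) = \Pi_k(r,p,n)$, while those with more than $n$ blocks are exactly the elements of $\Pi_k(r)$ with more than $n$ blocks together with all of $\Theta_k(r,p,n)$. For the image, Theorem \ref{oswd}(a) gives $\psi_k(x_d) = 0$ whenever $|d| > n$, so $\mbox{im}\,\psi_k$ is spanned by $\{\phi_k(x_d) \mid d \in \Pi_k(r,p,n)\}$, which is precisely the basis of $\mbox{End}_{\Grpn}(V^{\otimes k})$ supplied by Lemma \ref{ca}(a); hence $\mbox{im}\,\psi_k = \mbox{End}_{\Grpn}(V^{\otimes k})$.

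For the kernel I would use that the surviving images $\{\phi_k(x_d) \mid d \in \Pi_k(r,p,n)\}$ are linearly independent, being a basis, while $\phi_k(x_d) = 0$ for every $d \in A_k(r,p,n)$ with $|d| > n$. Thus a combination $\sum_d a_d x_d$ lies in $\ker \psi_k$ if and only if $a_d = 0$ for all $d$ with $|d| \leq n$, giving $\ker \psi_k = \C\mbox{-span}\{x_d \mid d \in A_k(r,p,n),\ |d| > n\}$ exactly as claimed. The isomorphism assertion then amounts to deciding when this kernel vanishes: since every diagram in $A_k$ has at most $2k$ blocks, the hypothesis $n \geq 2k$ forces $|d| \leq 2k \leq n$ for all $d$, so no diagram exceeds $n$ blocks and $\psi_k$ is injective, hence an isomorphism onto the centralizer. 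I expect the main obstacle to lie in the converse direction, namely producing, when $n < 2k$, an explicit diagram of $A_k(r,p,n)$ with more than $n$ blocks: here the congruence conditions defining $\Pi_k(r)$, $\Lambda_k(r,p,n)$ and $\Theta_k(r,p,n)$ genuinely constrain how small the blocks may be (for instance when $m > 1$ every block carries at least two vertices), so the overcounting diagram must be chosen according to the residues of $r$, $p$, and $m$, and this case analysis is the delicate part of the argument. The half-integer statement (b) is handled the same way, now with the bound that diagrams of $A_{k+\frac{1}{2}}$ have at most $2k+1$ blocks and the corresponding threshold $n \geq 2k+1$ coming from Theorem \ref{oswd}(b) and Lemma \ref{ca}(b).
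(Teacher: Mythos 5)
Your treatment of the image and the kernel coincides with the paper's own proof: both rest on Lemma \ref{ca}(a) to identify $\{\phi_k(x_d)\mid d\in\Pi_k(r,p,n)\}$ as a basis of $\mbox{End}_{\Grpn}(\Vk)$, on Theorem \ref{oswd}(a) to see that $\phi_k(x_d)=0$ exactly when $|d|>n$, and on the observation that $A_k(r,p,n)\setminus\Pi_k(r,p,n)$ is precisely the set of diagrams of $A_k(r,p,n)$ with more than $n$ blocks. This part of your argument, and its half-integer analogue in (b), is correct and complete.

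The genuine gap is the ``only if'' half of the final clause, which you defer to a case analysis you do not carry out: exhibiting, for every $n<2k$, a diagram of $A_k(r,p,n)$ with more than $n$ blocks. The paper does not supply this step either --- its proof simply asserts that the kernel vanishes if and only if $n\geq 2k$ --- and your own remark that for $m>1$ every block carries at least two vertices shows why the step cannot be completed in general. When $r\geq 2$ and $m\geq 2$, the congruences defining $\Pi_k(r)$, $\Lambda_k(r,p,n)$ and $\Theta_k(r,p,n)$ force every nonempty block to have at least two vertices, so no diagram of $A_k(r,p,n)$ has more than $k$ blocks, and for $k\leq n<2k$ the kernel is already zero. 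For instance, with $(r,p,n)=(2,1,3)$ and $k=2$ one has $A_2(2,1,3)=\Pi_2(2)=\{d_8,d_9,d_{10},d_{15}\}$ from Example \ref{k2}, each element having at most two blocks, so $\psi_2$ is an isomorphism onto $\mbox{End}_{G(2,1,3)}(V^{\otimes 2})$ although $n=3<4=2k$. What your argument actually establishes is that $\psi_k$ is an isomorphism if and only if $A_k(r,p,n)$ contains no diagram with more than $n$ blocks; converting this into a numerical threshold requires separating the case $m=1$ (where, for $n<2k$, the diagram with all $2k$ singleton blocks belongs to $A_k(r,p,n)$ and the threshold is indeed $n\geq 2k$) from the case $m\geq 2$ (where the threshold drops to $n\geq k$). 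The ``if'' direction, and everything preceding it, is fine.
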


\begin{proof} 
\begin{enumerate}[(a)]
\item For $r = 1$, this is Theorem \ref{oswd}(a) which is Schur-Weyl duality between $\C A_k(n)$ and $S_n$ acting on $\Vk$. Now consider $r \geq 2$.
Using Lemma \ref{ca}(a), we have $$\mbox{End}_{\Grpn}(V^{\otimes k}) = \C\mbox{-}span\{\psi_k(x_d) \mid d \in \Pi_k(r,p,n)\} \subset \psi_k(\Tn).$$ The element $d \in A_k(r,p,n) \setminus \Pi_k(r,p,n) $ has more than $n$ blocks. So, Theorem \ref{oswd}(a) implies that $$(\vi)\psi_k(x_d) = 0,$$
for $\vi \in \Vk$.
Thus, we get the image and kernel as stated in the theorem.  
The kernel of $\psi_k$ is zero if and only if $n \geq 2k$.

\item The proof of this part is along the similar lines as that of part (a) using Lemma \ref{ca}(b) and Theorem \ref{oswd}(b).  \qedhere
\end{enumerate}
\end{proof}

\begin{remark}
Putting $p = 1$ in Theorem \ref{swd}(a) we recover Schur-Weyl duality between $\T_k(r,1,n)$ and $G(r,1,n)$ as given in \cite[Theorem 5.4]{Ore07}. 
\end{remark}

\section{Bratteli diagram of Tanabe algebras}\label{bd}

Let us first study the decomposition of $\Vk$ and $\Vkh$ as $\Grpn$-module and as $\Lrpn$-module respectively. For the rest of the paper, we assume that $r \geq 2$. 

It can be easily seen using Okounkov-Vershik approach that the $G(r,1,n)$-module $V$ is an irreducible module parametrized by $((n-1),(1), \emptyset, \ldots, \emptyset) \in \Y(r,n)$. Using the theory of Section \ref{crg}, we see that for $(r,p,n) \neq (2,2,2)$, the $\Grpn$-module $V$ is an irreducible module parametrized by $(\tla, \delta)$, where $\tla \in \Y(m,p,n)$ and $\delta \in C_{\La}$ are as follows:
\begin{enumerate}[(i)]
 \item If $p \neq r$, then $\tla = (\tla_{(1)}, \ldots, \tla_{(m)})$ with 
 \begin{align*}
  \tla_{(1)} & = ((n-1), \emptyset, \ldots, \emptyset), \\
  \tla_{(2)} & = ((1), \emptyset, \ldots, \emptyset), \\
  \tla_{(i)} & = (\emptyset, \ldots, \emptyset), \; \mbox{ for }  i = 3, \ldots, m,
 \end{align*}
and $\delta = 1$ since $C_{\La} = \{1\}.$
 \item If $p =r$ and $(r,p,n) \neq (2,2,2)$, then $\tla = (\tla_{(1)}) = ((n-1),(1), \emptyset, \ldots, \emptyset)$ and $\delta = 1$ since $C_{\La} = \{1\}.$    
\end{enumerate}
For $(r,p,n) = (2,2,2)$, $V$ is the direct sum of irreducible $G(2,2,2)$-modules parametrzied by $(((1),(1)),1)$ and $(((1),(1)),-1)$.

Suppose that $\textbf{1}_n$ is the trivial representation of $G(r,1,n)$. Then, $ \sigma = \textbf{1}_{n-1} \otimes \sigma_2$ is a one-dimensional representation of $L(r,1,n)$ and thus, by restriction, a representation of $L(r,p,n)$. The parametrization of $\sigma$ as a $L(r,1,n)$-module is $\mu = ((n-1), \emptyset^{n}, \emptyset, \ldots, \emptyset) \in \Y^n(r,n-1)$. The parametrization of $\sigma$ as a $\Lrpn$-module is $\tm^{(n,i,j)} \in \Y^n(m,p,n-1)$ given as follows: 
\begin{enumerate}[(i)]
 \item If $p \neq r$, then $i=2, j =1$ and  $\tm^{(n,2,1)} = (\tm_{(1)}, \tm_{(2)}^{(n,1)}, \ldots, \tm_{(m)})$ with 
 \begin{align*}
  & \tm_{(1)}  = ((n-1), \emptyset, \ldots, \emptyset), \\
  & \tm_{(2)}^{(n,1)}  = (\emptyset^n, \emptyset, \ldots, \emptyset), \\
  & \tm_{(i)}  = (\emptyset, \ldots, \emptyset) \mbox{ for }  i = 3, \ldots, m.
  \end{align*}
 \item If $p =r$, then $i=1, j =2$ and  $\tm^{(n,1,2)} = (\tm_{(1)}^{(n,2)}) =((n-1),\emptyset^n, \emptyset, \ldots, \emptyset)$.
\end{enumerate}
  Using the above parametrizations of $V$ and $\sigma$, by Frobenius reciprocity and Theorem \ref{brgpl}, we have (for $(r,p,n) = (2,2,2)$ also) $$ V \cong \mbox{Ind}_{\Lrpn}^{\Grpn}(\sigma).$$
  
  Let $M$ be a $\Grpn$-module. Then using the tensor identity, we have 
\begin{align*}
\mbox{Ind}_{L(r,p,n)}^{G(r,p,n)}\bigg(\mbox{Res}_{L(r,p,n)}^{G(r,p,n)}(M) \otimes \sigma \bigg) & \cong M \otimes \mbox{Ind}_{L(r,p,n)}^{G(r,p,n)}(\sigma) \\ 
& \cong M \otimes V.
\end{align*}
Thus, taking $M = V^{\otimes(k-1)}$ for $k \geq 1$, we have 
\begin{align}
 \mbox{Ind}_{L(r,p,n)}^{G(r,p,n)}\bigg(\mbox{Res}_{L(r,p,n)}^{G(r,p,n)}(V^{\otimes(k-1)}) \otimes \sigma \bigg) & \cong  V^{\otimes k} \label{ind}\\
 \mbox{and Res}_{L(r,p,n)}^{G(r,p,n)}\bigg(\mbox{Ind}_{L(r,p,n)}^{G(r,p,n)}\bigg(\mbox{Res}_{L(r,p,n)}^{G(r,p,n)}(V^{\otimes(k-1)}) \otimes \sigma \bigg)\bigg) & \cong  V^{\otimes k} \label{res}
 \end{align}
 as $\Grpn$-module and $\Lrpn$-module respectively. 
 
It will be clear from the context whether we consider $\sigma$ as a $L(r,1,n)$-module or as a $\Lrpn$-module.
Given  $\La^{(n,t)} \in \Y^n(r,n-1)$, assume that $V^{\La^{(n,t)}}$ is the corresponding irreducible $L(r,1,n)$-module.
\begin{lemma}\label{tp1} 
For $\La^{(n,t)} \in \Y^n(r,n-1)$
 \begin{equation}
  V^{\La^{(n,t)}} \otimes \sigma = V^{\La^{(n,z)}}
 \end{equation}
where $\La^{(n,z)} \in \Y^n(r,n-1)$ and $z = (t+1)(\mbox{\em mod }r).$
\end{lemma}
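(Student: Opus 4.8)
The plan is to exploit the direct product decomposition $L(r,1,n) = G(r,1,n-1) \times G(n)$ established in Section \ref{crg}, under which both the irreducible module $V^{\La^{(n,t)}}$ and the one-dimensional module $\sigma$ factor as external tensor products. This reduces the computation of $V^{\La^{(n,t)}} \otimes \sigma$ to a factor-by-factor calculation, with all the content concentrated in the second (cyclic) factor $G(n)$.

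First I would write the irreducible $L(r,1,n)$-module indexed by $\La^{(n,t)} \in \Y^n(r,n-1)$ in the form $V^{\La^{(n,t)}} = V^{\La} \otimes \sigma_t$, where $\La \in \Y(r,n-1)$ is the underlying $r$-tuple, $V^{\La}$ is the corresponding irreducible $G(r,1,n-1)$-module, and $\sigma_t$ is the representation of the factor $G(n)$; this is exactly the parametrization of irreducible $L(r,1,n)$-modules recorded in Section \ref{crg} (the lemma preceding Theorem \ref{brgl1}). The module $\sigma = \textbf{1}_{n-1} \otimes \sigma_2$ factors in the same way, with trivial first factor. Since the diagonal action of $(g,h) \in G(r,1,n-1) \times G(n)$ on the tensor product of two such external products acts through $g$ on the $G(r,1,n-1)$-factors and through $h$ on the $G(n)$-factors, the canonical rearrangement gives
\begin{align*}
V^{\La^{(n,t)}} \otimes \sigma &= (V^{\La} \otimes \sigma_t) \otimes (\textbf{1}_{n-1} \otimes \sigma_2) \\
&\cong (V^{\La} \otimes \textbf{1}_{n-1}) \otimes (\sigma_t \otimes \sigma_2) = V^{\La} \otimes (\sigma_t \otimes \sigma_2).
\end{align*}

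The remaining point is to identify $\sigma_t \otimes \sigma_2$ among the irreducible representations $\sigma_1, \ldots, \sigma_r$ of $G(n) \cong \Z/r\Z$. Evaluating the one-dimensional character on the generator $\zeta$ gives $(\sigma_t \otimes \sigma_2)(\zeta) = \zeta^{t-1}\cdot\zeta = \zeta^{t}$, which equals $\sigma_z(\zeta) = \zeta^{z-1}$ precisely when $z \equiv t+1 \pmod r$; reading the index according to the convention that a multiple of $r$ denotes the $r$-th component, this is $z = (t+1)(\mbox{mod }r)$. Hence $V^{\La} \otimes (\sigma_t \otimes \sigma_2) = V^{\La} \otimes \sigma_z = V^{\La^{(n,z)}}$, which is the asserted equality.

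I do not anticipate a genuine obstacle: the entire content is the factorization over the direct product together with the cyclic character computation, and the only care required is the bookkeeping of the color index modulo $r$. The use of equality rather than mere isomorphism is justified because both sides are the single irreducible $L(r,1,n)$-module labelled by $\La^{(n,z)}$, canonically identified via its $GZ$-data as throughout Section \ref{crg}.
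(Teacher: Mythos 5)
Your proof is correct, but it takes a genuinely different route from the paper's. The paper proves Lemma \ref{tp1} by the same Okounkov--Vershik mechanism as Lemma \ref{shl}: it identifies the $GZ$-subspace of $\sigma$ as $\sigma_1^{\otimes(n-1)}\otimes\sigma_2$ with $GZ$-basis vector $v_1^{\otimes(n-1)}\otimes v_2$, computes the label and the Jucys--Murphy eigenvalues of a $GZ$-subspace of $V^{\La^{(n,t)}}\otimes\sigma$, and invokes the fact that a $GZ$-subspace is determined by its weight and in turn determines the parametrizing colored $r$-tuple. You instead exploit the direct product decomposition $L(r,1,n)=G(r,1,n-1)\times G(n)$, under which $V^{\La^{(n,t)}}=V^{\La}\boxtimes\sigma_t$ and $\sigma=\textbf{1}_{n-1}\boxtimes\sigma_2$, reducing everything to the character identity $\sigma_t\otimes\sigma_2=\sigma_{(t+1)(\mathrm{mod}\ r)}$ on the cyclic factor; your index bookkeeping ($\zeta^{t-1}\cdot\zeta=\zeta^{z-1}$ iff $z\equiv t+1$) is right. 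Your argument is shorter and more elementary, and it works precisely because $L(r,1,n)$ is an honest direct product --- a luxury unavailable for $G(r,1,n)$ itself, which is why the analogous Lemma \ref{shl}(a) genuinely needs the $GZ$/weight machinery. What the paper's route buys in exchange is uniformity with Lemma \ref{shl} and an explicit identification of $GZ$-basis vectors on both sides, which is what licenses writing equality (canonical identification) rather than mere isomorphism throughout Section \ref{bd}; your closing remark gestures at this correctly, though pinning down the canonical identification strictly speaking requires tracking the $GZ$-data as the paper does.
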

\begin{proof}
 Noting that the $GZ$-subspace of $\sigma$ is given by $\underbrace{\sigma_1 \otimes \cdots \otimes \sigma_1}_{(n-1) \mbox{-fold}} \otimes  \sigma_2$ with $GZ$-basis element given by $\underbrace{v_1 \otimes \cdots \otimes v_1}_{(n-1)\mbox{-fold}} \otimes v_2$, the proof is similar to that of Theorem \ref{shl}. 
\end{proof}

Given an $(n,i,j)$-colored $(m,p)$-necklace $\tla^{(n,i,j)} \in \Y^n(m,p,n-1)$, suppose that $V^{\tla^{(n,i,j)}}$ is the corresponding irreducible $\Lrpn$-module.

\begin{lemma}\label{tp} 
For $\tla^{(n,i,j)} \in \Y^n(m,p,n-1)$
 \begin{equation}
  V^{\tla^{(n,i,j)}} \otimes \sigma = V^{\tla^{(n,x,y)}}
 \end{equation}
where $\tla^{(n,x,y)} \in \Y^n(m,p,n-1)$ is obtained from $\tla^{(n,i,j)}$ by the following rule{\em:} 
\begin{enumerate}[(i)]
 \item If $i < m$, then $x=i+1$ and $y=j$;
 \item If $i = m$, then $x=1$ and $y=(j+1)(\mbox{mod }p)$.
\end{enumerate}
\end{lemma}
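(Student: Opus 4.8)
The plan is to deduce the statement from the already-established behaviour of tensoring by $\sigma$ over the larger group $L(r,1,n)$, recorded in Lemma \ref{tp1}, by commuting the tensor product past restriction to $\Lrpn$. First I would set $t = (j-1)m+i$, the unique index with $1 \le i \le m$, $1 \le j \le p$ attached to the colored node $(i,j)$ as in Theorem \ref{irrlp}; writing $\La^{(n,t)} \in \Y^n(r,n-1)$ for the colored $r$-tuple whose necklace depiction is $\tla^{(n,i,j)}$, Theorem \ref{irrlp} provides the identification $V^{\tla^{(n,i,j)}} = \mbox{Res}^{L(r,1,n)}_{\Lrpn}\big(V^{\La^{(n,t)}}\big)$.

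Next, since $\sigma$ as a $\Lrpn$-module is by definition the restriction of $\sigma$ as an $L(r,1,n)$-module and tensoring commutes with restriction, I would write
\begin{align*}
V^{\tla^{(n,i,j)}} \otimes \sigma
 &= \mbox{Res}^{L(r,1,n)}_{\Lrpn}\big(V^{\La^{(n,t)}}\big) \otimes \mbox{Res}^{L(r,1,n)}_{\Lrpn}(\sigma) \\
 &= \mbox{Res}^{L(r,1,n)}_{\Lrpn}\big(V^{\La^{(n,t)}} \otimes \sigma\big).
\end{align*}
Applying Lemma \ref{tp1} inside the restriction turns this into $\mbox{Res}^{L(r,1,n)}_{\Lrpn}\big(V^{\La^{(n,z)}}\big)$ with $z = (t+1)(\mbox{mod }r)$; here it is essential that, in the notation of Lemma \ref{tp1}, tensoring by $\sigma$ leaves the underlying $r$-tuple of Young diagrams unchanged and merely advances the colored component from $t$ to $z$, so that the underlying necklace $\tla$ is unaffected and only the colored node moves. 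A final application of Theorem \ref{irrlp} then rewrites $\mbox{Res}^{L(r,1,n)}_{\Lrpn}\big(V^{\La^{(n,z)}}\big) = V^{\tla^{(n,x,y)}}$, where $(x,y)$ is the unique pair with $z = (y-1)m+x$, $1 \le x \le m$, $1 \le y \le p$.

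It then remains to check that solving $z = (y-1)m + x$ for $z = (t+1)(\mbox{mod }r) = \big((j-1)m + i + 1\big)(\mbox{mod }r)$ produces exactly the stated rule. When $i < m$ one has $1 \le i+1 \le m$ and $(j-1)m + (i+1) \le pm = r$, so $z = (j-1)m+(i+1)$ and $(x,y) = (i+1,j)$. When $i = m$ one has $t+1 = jm+1$: if $j < p$ then $z = jm+1 = \big((j+1)-1\big)m + 1$, giving $(x,y) = (1, j+1)$, while if $j = p$ then $t+1 = r+1 \equiv 1 \,(\mbox{mod }r)$, giving $z = 1$ and $(x,y) = (1,1)$; both cases are subsumed in $x = 1$, $y = (j+1)(\mbox{mod }p)$, using the indexing convention that a multiple of $p$ names the $p$-th node. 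I expect this modular bookkeeping — in particular the interaction of $(\mbox{mod }r)$ with the convention that multiples are represented by the modulus rather than $0$ — to be the only delicate point; the structural steps are immediate from Lemma \ref{tp1} and Theorem \ref{irrlp}.
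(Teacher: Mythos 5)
Your proposal is correct and follows essentially the same route as the paper, whose proof is precisely "use Lemma \ref{tp1} and Theorem \ref{irrlp}"; you have simply spelled out the commuting of tensoring with restriction and the modular bookkeeping for $t=(j-1)m+i \mapsto z=(t+1)(\mbox{mod }r)$, both of which check out under the paper's convention that a multiple of the modulus names the last component.
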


\begin{proof}
 The proof follows by using Lemma \ref{tp1} and Theorem \ref{irrlp}.
\end{proof}

Define the sets $\vo_k(r,p,n)$ and $\vo_{k+\frac{1}{2}}(r,p,n)$ as follows. 
Let 
$${\vo}_0(r,p,n)= \{(\tla,1)\}$$ where $\tla = (((n),\emptyset, \ldots, \emptyset), (\emptyset, \ldots, \emptyset), \ldots, (\emptyset, \ldots, \emptyset)) \in \Y(m,p,n)$.
For $k \in \Z_{>0}$ the sets $\vo_k(r,p,n) \subseteq \Y(m,p,n) \times C $ and $\vo_{k+\frac{1}{2}}(r,p,n) \subseteq \Y^n(m,p,n-1)$ are obtained by the following recursive rule. 

{\em From $\vo_k(r,p,n)$ to $\vo_{k+\frac{1}{2}}(r,p,n)$}:

For $(\tla,\delta) \in \vo_k(r,p,n)$, let $\tla_{(i,j)-} \in \Y(m,p,n-1)$ be the set of  $(m,p)$-necklaces obtained by deleting an inner corner from $\tla_{(i,j)}$. 
For $\tm \in \tla_{(i,j)-}$, color $\tm$ by $(n,i,j)$ to obtain $\tm^{(n,i,j)} \in \vo_{k+\frac{1}{2}}(r,p,n)$. 

{\em From $\vo_{k+\frac{1}{2}}(r,p,n)$ to $\vo_{k+1}(r,p,n)$}:

For $\tm^{(n,i,j)} \in \vo_{k+\frac{1}{2}}(r,p,n)$, remove the color $(n,i,j)$ to get $\tm \in \Y(m,p,n-1)$ and then add a box to an outer corner, either in the $j$-th node of $(i+1)$-th component of $\tm$ if $1 \leq i \leq m-1$ or in the $(j+1)(\mbox{mod } p)$-th node of the first component of $\tm$ if $i = m$, to obtain $\tilde{\nu} \in \Y(m,p,n)$. Let $C_{\nu}$ be the correspoding stabilizer subgroup. For $\delta \in C_\nu \subseteq C$, the element $(\tilde{\nu}, \delta) \in \vo_{k+1}(r,p,n)$.

\begin{thm}\label{decvk}
The indexing sets of the irreducible $\Grpn$-modules occuring in $\Vk$ and of the irreducible $\Lrpn$-modules occuring in $\Vkh$ are $\vo_k(r,p,n)$ and $\vo_{k+\frac{1}{2}}(r,p,n)$ respectively.
\end{thm}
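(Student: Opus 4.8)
The plan is to establish both statements together by induction on the level $\ell\in\frac{1}{2}\Z_{\geq 0}$, alternating between the integer levels, where one decomposes $\Vk$ into irreducible $\Grpn$-modules (parametrized by pairs $(\tla,\delta)$ via Theorem \ref{irrgp}), and the half-integer levels, where one decomposes $\Vkh$ into irreducible $\Lrpn$-modules (parametrized by colored necklaces via Theorem \ref{irrlp}). Two functorial bridges link consecutive levels. Since $\C v_n$ is exactly the one-dimensional $\Lrpn$-module $\sigma$, we have $\Vkh\cong\mbox{Res}_{\Lrpn}^{\Grpn}(\Vk)\otimes\sigma$ as $\Lrpn$-modules; and the identity (\ref{ind}) gives $V^{\otimes(k+1)}\cong\mbox{Ind}_{\Lrpn}^{\Grpn}(\Vkh)$ as $\Grpn$-modules. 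The base case $\ell=0$ is immediate: $V^{\otimes 0}=\C$ is the trivial $\Grpn$-module, whose parameter is the unique element of $\vo_0(r,p,n)$.

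From an integer level to the next half-integer level, I would take the inductively known decomposition $\Vk\cong\bigoplus_{(\tla,\delta)\in\vo_k(r,p,n)}V^{(\tla,\delta)}$, restrict each summand to $\Lrpn$ by Theorem \ref{brgpl}, and then apply Lemma \ref{tp} to incorporate the tensor factor $\sigma$. The restriction of Theorem \ref{brgpl} produces precisely the colored necklaces obtained by deleting an inner corner of $\tla_{(i,j)}$, while Lemma \ref{tp} records the transport of the coloring under $\otimes\sigma$; together these reproduce the recursive rule defining the passage from $\vo_k(r,p,n)$ to $\vo_{k+\frac{1}{2}}(r,p,n)$. As the branching of Theorem \ref{brgpl} is multiplicity free, each colored necklace arises cleanly, so the index set of $\Vkh$ is $\vo_{k+\frac{1}{2}}(r,p,n)$.

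From a half-integer level to the next integer level, I would use $V^{\otimes(k+1)}\cong\mbox{Ind}_{\Lrpn}^{\Grpn}(\Vkh)$ together with Frobenius reciprocity: an irreducible $V^{(\tilde{\nu},\delta)}$ occurs in the induced module exactly when some $V^{\tm^{(n,i,j)}}$, with $\tm^{(n,i,j)}\in\vo_{k+\frac{1}{2}}(r,p,n)$, occurs in $\mbox{Res}_{\Lrpn}^{\Grpn}(V^{(\tilde{\nu},\delta)})$. Reading Theorem \ref{brgpl} in reverse identifies the admissible $\tilde{\nu}$ as the necklaces obtained by adding a box at the prescribed (shifted) node, which is the rule defining $\vo_{k+1}(r,p,n)$ from $\vo_{k+\frac{1}{2}}(r,p,n)$. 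The key input is Theorem \ref{gl}: because $\mbox{Res}_{\Lrpn}^{\Grpn}(V^{(\tilde{\nu},\delta_1)})\cong\mbox{Res}_{\Lrpn}^{\Grpn}(V^{(\tilde{\nu},\delta_2)})$ for all $\delta_1,\delta_2\in C_{\nu}$, once a single character contributes $V^{(\tilde{\nu},\delta)}$ every $\delta\in C_{\nu}$ does, which is precisely why the recursion attaches the full stabilizer $C_{\nu}$ to each new necklace.

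The step I expect to be most delicate is the combinatorial bookkeeping of the coloring under the $\sigma$-twist of Lemma \ref{tp}: one must check that the node shift $(i,j)\mapsto(i+1,j)$, and $(i,j)\mapsto(1,(j+1)\ (\mbox{mod }p))$ when $i=m$, induced by tensoring with $\sigma$ is exactly the shift built into the definition of $\vo$, so that the deletion and the subsequent addition of a box occur at the nodes dictated by the recursion. A second point needing care is the determination, via Clifford theory (Theorem \ref{cd}) and Theorem \ref{gl}, of which stabilizer characters $\delta$ appear at the integer levels; the multiplicity-freeness of the restriction in Theorem \ref{brgpl} and of the tower then ensures that each index is produced exactly once, so that the resulting Bratteli diagram is a simple graph.
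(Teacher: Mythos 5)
Your proposal follows essentially the same route as the paper: the paper's proof is a one-line citation of exactly the ingredients you assemble, namely the tensor-identity isomorphisms (\ref{ind}) and (\ref{res}), the branching rule of Theorem \ref{brgpl}, Lemma \ref{tp}, Frobenius reciprocity, and the identification of $\C v_n$ with $\sigma$, organized into the obvious induction on the level. The one point to tighten is the bookkeeping you yourself flag as delicate: you apply the $\sigma$-twist of Lemma \ref{tp} already in the passage from level $k$ to level $k+\frac{1}{2}$, whereas the recursion defining $\vo_{k+\frac{1}{2}}(r,p,n)$ records only the restriction (delete a box from $\tla_{(i,j)}$ and color by $(n,i,j)$, with no shift of node) and builds the shift into the next half-step, where the box is added at the \emph{shifted} node. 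If you twist at the half-integer step you must not shift again at the integer step, or the shift is applied twice; with your convention the half-integer labels come out twisted by $\sigma$ relative to $\vo_{k+\frac{1}{2}}(r,p,n)$ (which is what literally sits inside $\Vkh \cong \mbox{Res}_{\Lrpn}^{\Grpn}(\Vk)\otimes\sigma$, as your computation of $\C v_n$ shows), while the paper's convention labels $\mbox{Res}_{\Lrpn}^{\Grpn}(\Vk)$ instead and passes between the two via the assertion that $\Vkh$ and $\Vk$ are isomorphic as $\Lrpn$-modules. Since twisting by a linear character changes neither the centralizer algebra nor any multiplicity, the two conventions produce the same Bratteli diagram and the same downstream results; just fix one convention and carry it consistently through both half-steps.
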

\begin{proof}
 The proof follows from (\ref{ind}), (\ref{res}), Lemma
 \ref{tp}, branching rule from $\Grpn$ to $\Lrpn$ in Theorem \ref{brgpl}, Frobenius reciprocity and the observation that the spaces $\Vkh$ and $\Vk$ are isomorphic as $\Lrpn$-modules.
\end{proof}

\begin{thm}\label{irrend}
 The indexing sets of the irreducible $\mbox{End}_{\Grpn}(\Vk)$-modules and of the irreducible $\mbox{End}_{\Lrpn}(\Vkh)$-modules are $\vo_k(r,p,n)$ and $\vo_{k+\frac{1}{2}}(r,p,n)$ respectively.
\end{thm}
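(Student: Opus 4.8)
The plan is to deduce the statement directly from the double centralizer theorem together with Theorem \ref{decvk}. Since $\Grpn$ is a finite group, its group algebra $\C[\Grpn]$ is semisimple, and hence the finite-dimensional module $\Vk$ is completely reducible. Consequently the centralizer algebra $\mbox{End}_{\Grpn}(\Vk)$ is semisimple; by Theorem \ref{swd}(a) it coincides with the image $\psi_k(\Tn)$ of the Tanabe algebra, which is what links this computation to the representation theory of $\Tn$, although the statement as phrased only concerns the centralizer algebra itself.

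First I would write down the bimodule decomposition. Letting $\{V^{(\tla,\delta)}\}$ range over the pairwise non-isomorphic irreducible $\Grpn$-modules occurring in $\Vk$, complete reducibility yields a decomposition of $\Vk$ as a $(\C[\Grpn], \mbox{End}_{\Grpn}(\Vk))$-bimodule of the form
$$\Vk \;\cong\; \bigoplus_{(\tla,\delta)} V^{(\tla,\delta)} \otimes U^{(\tla,\delta)},$$
where the multiplicity space $U^{(\tla,\delta)} := \mbox{Hom}_{\Grpn}(V^{(\tla,\delta)}, \Vk)$ carries the natural $\mbox{End}_{\Grpn}(\Vk)$-action. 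The content of the double centralizer theorem is that each $U^{(\tla,\delta)}$ is an irreducible $\mbox{End}_{\Grpn}(\Vk)$-module, that non-isomorphic $V^{(\tla,\delta)}$ produce non-isomorphic $U^{(\tla,\delta)}$, and that every simple $\mbox{End}_{\Grpn}(\Vk)$-module arises in this way; concretely this follows from the algebra isomorphism $\mbox{End}_{\Grpn}(\Vk) \cong \bigoplus_{(\tla,\delta)} \mbox{End}(U^{(\tla,\delta)})$.

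It then follows that the irreducible $\mbox{End}_{\Grpn}(\Vk)$-modules are in bijection with the isomorphism classes of irreducible $\Grpn$-modules appearing in $\Vk$. By Theorem \ref{decvk}, the latter are indexed precisely by $\vo_k(r,p,n)$, giving the first assertion. For the second assertion I would run the identical argument with $\Grpn$ replaced by $\Lrpn$ and $\Vk$ replaced by $\Vkh$: again $\Lrpn$ is finite, so $\mbox{End}_{\Lrpn}(\Vkh)$ is semisimple and equals $\psi_{k+\frac{1}{2}}(\Thn)$ by Theorem \ref{swd}(b), and the double centralizer theorem identifies its irreducible modules with the irreducible $\Lrpn$-modules occurring in $\Vkh$, which by Theorem \ref{decvk} are indexed by $\vo_{k+\frac{1}{2}}(r,p,n)$.

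There is no serious obstacle in this argument: the two points that warrant care are confirming semisimplicity of the centralizer algebras, which is immediate from finiteness of the groups $\Grpn$ and $\Lrpn$, and invoking the full bimodule form of the double centralizer theorem (establishing a bijection of irreducibles) rather than merely matching dimensions. Everything substantive has already been carried out in Theorem \ref{decvk}, so this final statement is essentially a reformulation of that decomposition on the centralizer side.
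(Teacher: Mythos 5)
Your proof is correct and takes essentially the same approach as the paper, which likewise deduces the statement directly from the double centralizer theorem combined with Theorem \ref{decvk}. Your write-up merely spells out the bimodule decomposition and the semisimplicity of the centralizer algebras, which the paper leaves implicit.
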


\begin{proof}
 The proof is a consequence of the centralizer theorem (\cite[Theorem 5.4]{HR05}) and Theorem \ref{decvk}. 
\end{proof}

\begin{thm}\label{dvkp}
 Let $n$ and $k$ be nonnegative integers.
 \begin{enumerate}[(a)]
  \item For $n \geq 2k$, as $(\C[G(r,p,n)], \Tn)$-bimodule,
  $$ \Vk \cong \bigoplus\limits_{(\tla,\delta) \in \vo_k(r,p,n)} \bigg(V^{(\tla, \delta)} \otimes \T_k^{(\tla, \delta)}\bigg),$$
 where $V^{(\tla, \delta)}$ is the irreducible $G(r,p,n)$-module and $\T_k^{(\tla,\delta)}$ is the irreducible $\Tn$-module parametrized by $(\tla,\delta) \in \vo_k(r,p,n)$. Also
 \begin{align*}
 \mbox{\em dim}(T_k^{(\tla, \delta)}) = & \mbox{ the number of paths from }(((n),\emptyset, \ldots, \emptyset),1)  \in \vo_0(r,p,n) \\ 
 &\mbox{ to } (\tla, \delta) \in \vo_k(r,p,n) \mbox{ in the Bratteli diagram }\widehat{\T}(r,p,n).
 \end{align*}
 
 \item For $n \geq 2k+1$, as $(\C[L(r,p,n)], \Thn))$-bimodule,
  $$ \Vkh \cong \bigoplus\limits_{\tm^{(n,i,j)} \in \vo_{k+\frac{1}{2}}(r,p,n)} \bigg(V^{\tm^{(n,i,j)}} \otimes \T_{k+\frac{1}{2}}^{\tm^{(n,i,j)}}\bigg),$$
 where $V^{\tm^{(n,i,j)}}$ is the irreducible $L(r,p,n)$-module and $\T_{k+\frac{1}{2}}^{\tm^{(n,i,j)}}$ is the irreducible 
 
 $\Thn$-module parametrized by $\tm^{(n,i,j)} \in \vo_{k+\frac{1}{2}}(r,p,n)$ and 
 \begin{align*}
 \mbox{\em dim}(T_k^{\tm^{(n,i,j)}}) = & \mbox{ the number of paths from }(((n),\emptyset, \ldots, \emptyset),1)  \in \vo_0(r,p,n) \\ 
 &\mbox{ to } \tm^{(n,i,j)} \in \vo_{k+\frac{1}{2}}(r,p,n) \mbox{ in the Bratteli diagram }\widehat{\T}(r,p,n).
 \end{align*}
 \end{enumerate}

\end{thm}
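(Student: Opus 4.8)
The plan is to obtain both parts as instances of the double centralizer theorem, matching the two indexing sets already computed in Theorems \ref{decvk} and \ref{irrend}, and then to read off the dimension formula from the multiplicity-free structure of the tower.

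First I would record the semisimple setup for part (a). Since $\Grpn$ is a finite group, its group algebra $\C[\Grpn]$ is semisimple, so $\Vk$ is a completely reducible $\Grpn$-module. For $n \geq 2k$, Theorem \ref{swd}(a) identifies the centralizer $\mbox{End}_{\Grpn}(\Vk)$ with $\Tn$ as algebras. Applying the centralizer theorem \cite[Theorem 5.4]{HR05} to the pair $\C[\Grpn]$ and $\Tn$ acting on $\Vk$ yields a decomposition of $\Vk$ as a $(\C[\Grpn], \Tn)$-bimodule of the form
$$\Vk \cong \bigoplus_{\lambda} V^{\lambda} \otimes W^{\lambda},$$
where $\lambda$ runs over the distinct irreducible $\Grpn$-modules $V^{\lambda}$ occurring in $\Vk$, each $W^{\lambda}$ is the corresponding irreducible $\Tn$-module, and $\dim W^{\lambda}$ equals the multiplicity of $V^{\lambda}$ in $\Vk$. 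By Theorem \ref{decvk} the indexing set of the $V^{\lambda}$ is exactly $\vo_k(r,p,n)$, and by Theorem \ref{irrend} the irreducible $\Tn \cong \mbox{End}_{\Grpn}(\Vk)$-modules are indexed by the same set $\vo_k(r,p,n)$. The bimodule decomposition pairs each $V^{\lambda}$ bijectively with its multiplicity space, so choosing the labeling of irreducible $\Tn$-modules to be this correspondence and writing $\lambda = (\tla,\delta)$, $W^{(\tla,\delta)} = \T_k^{(\tla,\delta)}$ gives the claimed decomposition.

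For the dimension claim I would argue that the multiplicity of $V^{(\tla,\delta)}$ in $\Vk$, which by the bimodule decomposition equals $\dim \T_k^{(\tla,\delta)}$, is counted by paths in $\widehat{\T}(r,p,n)$. The key point is that the recursive rule defining $\vo_k(r,p,n)$ from $\vo_{k-1}(r,p,n)$ through $\vo_{k-\frac{1}{2}}(r,p,n)$ matches the branching in the tower: restricting along $\Grpn \supseteq \Lrpn$ realizes the step $\vo_k \to \vo_{k+\frac{1}{2}}$ by Theorem \ref{brgpl} (box-deletion and coloring), while the isomorphisms (\ref{ind}), (\ref{res}) together with Lemma \ref{tp} realize the step $\vo_{k+\frac{1}{2}} \to \vo_{k+1}$ (tensoring by $\sigma$ and inducing, that is, color-removal, the shift of node, and box-addition). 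Since these are precisely the edges of $\widehat{\T}(r,p,n)$ and the tower is multiplicity free, the multiplicities satisfy the additive path-counting recursion starting from the single root $(((n),\emptyset,\ldots,\emptyset),1) \in \vo_0(r,p,n)$. An induction on $k$ then identifies $\dim \T_k^{(\tla,\delta)}$ with the number of paths from this root to $(\tla,\delta)$.

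Part (b) is proved in exactly the same way, replacing $\Grpn$ by $\Lrpn$, $\Vk$ by $\Vkh$, Theorem \ref{swd}(a) by Theorem \ref{swd}(b) (valid for $n \geq 2k+1$), and the two indexing-set theorems by their $\Lrpn$ and $\vo_{k+\frac{1}{2}}(r,p,n)$ versions; the path count is again read off from $\widehat{\T}(r,p,n)$. I expect the main obstacle to be not the bimodule decomposition itself, which is essentially formal once the centralizer is identified, but the careful verification that the recursive generation of $\vo_k(r,p,n)$ and $\vo_{k+\frac{1}{2}}(r,p,n)$ coincides edge-for-edge with the branching of the tower $\T_0(r,p,n) \subseteq \T_{\frac{1}{2}}(r,p,n) \subseteq \cdots$. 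This is what legitimizes replacing the multiplicity of $V^{(\tla,\delta)}$ in $\Vk$ by the number of paths in $\widehat{\T}(r,p,n)$, and it requires invoking (\ref{ind}), (\ref{res}), Theorem \ref{brgpl} and Lemma \ref{tp} in the correct order.
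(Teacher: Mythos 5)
Your proposal is correct and follows essentially the same route as the paper: Schur--Weyl duality (Theorem \ref{swd}) plus the double centralizer theorem give the bimodule decomposition, the indexing sets are matched via Theorems \ref{decvk} and \ref{irrend}, and the dimension formula follows from the multiplicity-free recursive construction of the sets $\vo_l(r,p,n)$. You have merely spelled out in more detail the path-counting induction that the paper leaves implicit.
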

 
\begin{proof}
  The proofs of $(a)$ and $(b)$ follow from Theorem \ref{swd}(a) and (b) respectively along with the centralizer theorem, Theorem \ref{decvk} and Theorem \ref{irrend}.
 \end{proof}
 
For $(\tla, \delta) \in \vo_k(r,p,n)$, define the set $A_{k-\frac{1}{2}}^{(\tla, \delta)}$ as consisting of the elements $\tm^{(n,i,j)} \in \vo_{k-\frac{1}{2}}(r,p,n)$ for some $ 1 \leq i \leq m$ and $1 \leq j \leq p$ such that $(\tla, \delta)$ is obtained from $\tm^{(n,i,j)}$ while constructing $\vo_k(r,p,n)$ from $\vo_{k-\frac{1}{2}}(r,p,n)$.
For $\tm^{(n,i,j)} \in \vo_{k+\frac{1}{2}}(r,p,n)$, define the set $A_{k}^{\tm^{(n,i,j)}}$ as consisting of the elements $(\tla, \delta) \in \vo_{k}(r,p,n)$ such that $\tm^{(n,i,j)}$ is obtained from $(\tla, \delta)$  while constructing $\vo_{k+\frac{1}{2}}(r,p,n)$ from $\vo_k(r,p,n)$. 
 
 \begin{cor}\phantomsection\label{brtkh}
 \begin{enumerate}[(a)]
  \item For $n\geq 2k$ and for $(\tla, \delta) \in \vo_k(r,p,n)$, we have $$\mbox{\em Res}_{\Tmn}^{\Tn}(\T^{(\tla,\delta)}_{k}) = \bigoplus_{\tm^{(n,i,j)} \in A_{k-\frac{1}{2}}^{(\tla, \delta)}} \T_{k-\frac{1}{2}}^{\tm^{(n,i,j)}}.$$
  \item For $n\geq 2k+1$ and for $\tm^{(n,i,j)} \in \vo_{k+\frac{1}{2}}(r,p,n)$, we have $$\mbox{\em Res}_{\Tn}^{\Thn}(\T^{\tm^{(n,i,j)}}_{k+\frac{1}{2}}) = \bigoplus_{(\tla, \delta) \in A_k^{\tm^{(n,i,j)}}} \T_k^{(\tla, \delta)}.$$
 \end{enumerate}
 \end{cor}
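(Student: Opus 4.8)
Both branching rules are instances of the general principle that, in a tower of centralizer algebras coming from Schur--Weyl duality, the algebra-side branching is dual to the group-side branching. The engine is to decompose a single tensor space as a bimodule in two ways and to match multiplicities, feeding in the $\Grpn$-to-$\Lrpn$ branching rule (Theorem~\ref{brgpl}). Throughout I would use Theorem~\ref{swd} to identify $\Tn \cong \mbox{End}_{\Grpn}(\Vk)$ and $\Thn \cong \mbox{End}_{\Lrpn}(\Vkh)$, Theorem~\ref{dvkp} for the explicit bimodule decompositions, and the centralizer theorem \cite[Theorem~5.4]{HR05}. Since the relevant group branchings are multiplicity free, every multiplicity below lies in $\{0,1\}$, which simultaneously upgrades the module isomorphisms to the stated \emph{equalities} (the decompositions are canonical) and identifies the nonzero terms with the downward edges of $\widehat{\T}(r,p,n)$, that is, with $A_k^{\tm^{(n,i,j)}}$ and $A_{k-\frac{1}{2}}^{(\tla,\delta)}$.

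\textbf{Part (b).} The algebras $\Tn \subseteq \Thn$ are centralizers of the nested groups $\Lrpn \subseteq \Grpn$ on essentially the same tensor space. Viewing $\Vkh = \Vk \otimes \C v_n$, the image of $\Tn$ acts as $\psi_k \otimes \mbox{id}_{v_n}$, so $\Tn$ centralizes $\Grpn$ acting on the $\Vk$ factor, while $\Thn = \mbox{End}_{\Lrpn}(\Vkh)$ centralizes the genuine $\Lrpn$-action. I would then decompose $\Vkh$ as a $(\C[\Lrpn],\Tn)$-bimodule in two ways --- by restricting the $\Grpn$-action of Theorem~\ref{dvkp}(a) to $\Lrpn$, and by restricting the $\Thn$-action of Theorem~\ref{dvkp}(b) to $\Tn$ --- and compare the coefficient of each (irreducible $\Lrpn$-module)$\,\otimes\,\T^{(\tla,\delta)}_k$. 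This should give
\begin{equation*}
[\mbox{Res}^{\Thn}_{\Tn}(\T^{\tm^{(n,i,j)}}_{k+\frac{1}{2}}) : \T^{(\tla,\delta)}_k] = [\mbox{Res}^{\Grpn}_{\Lrpn}(V^{(\tla,\delta)}) : V^{\tm^{(n,i,j)}}],
\end{equation*}
once the $\sigma$-twist relating the two $\Lrpn$-actions on $\Vkh$ is absorbed into the colour labels via Lemma~\ref{tp}; this twist is exactly the colour shift built into the recursion $\vo_k \to \vo_{k+\frac{1}{2}}$ and into the indexing of Theorem~\ref{decvk}. By Theorem~\ref{brgpl} the right-hand side equals $1$ precisely when $\tm^{(n,i,j)} \in \tla \downarrow (i,j)$, equivalently when $(\tla,\delta) \in A_k^{\tm^{(n,i,j)}}$, and $0$ otherwise, which is the assertion.

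\textbf{Part (a).} Here $\Tmn \cong \mbox{End}_{\Lrpn}(V^{\otimes(k-\frac{1}{2})})$ and $\Tn \cong \mbox{End}_{\Grpn}(\Vk)$ are centralizers on \emph{different} tensor powers, so the inclusion $\Tmn \subseteq \Tn$ is the ``tensor with $V$'' step rather than a plain subgroup step. The plan is to route through the induction picture already used for Theorem~\ref{decvk}: from $\Vk = V^{\otimes(k-1)} \otimes V$ and $V \cong \mbox{Ind}_{\Lrpn}^{\Grpn}(\sigma)$, the tensor identity (\ref{ind}) expresses the $(\C[\Grpn],\Tn)$-bimodule $\Vk$ in terms of the $\Lrpn$-structure on $V^{\otimes(k-\frac{1}{2})}$ twisted by $\sigma$ and induced. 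Frobenius reciprocity then converts the multiplicity $[\mbox{Res}^{\Tn}_{\Tmn}(\T^{(\tla,\delta)}_k) : \T^{\tm^{(n,i,j)}}_{k-\frac{1}{2}}]$ into a $\Grpn$-to-$\Lrpn$ branching multiplicity, and Theorem~\ref{brgpl} together with the ``remove the colour, add a box in the shifted node'' rule defining the passage $\vo_{k-\frac{1}{2}} \to \vo_k$ shows this equals $1$ exactly when $\tm^{(n,i,j)} \in A_{k-\frac{1}{2}}^{(\tla,\delta)}$.

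\textbf{Main obstacle.} The genuinely delicate point is Part (a): because $\Tmn$ and $\Tn$ act on different spaces, one cannot simply compare two decompositions of a single bimodule as in Part (b), and the combinatorics of the induction step --- colour removal, box addition, and the component shift in the $\vo$-recursion --- must be matched line by line with the $\sigma$-twisted Frobenius reciprocity coming from $V \cong \mbox{Ind}_{\Lrpn}^{\Grpn}(\sigma)$ and Lemma~\ref{tp}. Once this dictionary between the two combinatorial recursions is in place --- and it is the same dictionary already implicit in the proofs of Theorem~\ref{decvk} and Lemma~\ref{tp} --- multiplicity-freeness from Theorem~\ref{brgpl} promotes the resulting module isomorphisms to the asserted equalities of $\Tmn$- and $\Tn$-modules.
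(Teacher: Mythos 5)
Your proposal is correct and follows essentially the same route as the paper: both convert the algebra-side branching into the group-side branching of Theorem~\ref{brgpl} by way of the Schur--Weyl bimodule decompositions of Theorem~\ref{dvkp}, the tensor identity (\ref{ind})/(\ref{res}), Frobenius reciprocity, and the $\sigma$-twist bookkeeping of Lemma~\ref{tp}, with multiplicity-freeness upgrading the isomorphisms to the stated canonical equalities. The only cosmetic difference is that in part (b) you compare two decompositions of $\Vkh$ as a $(\C[\Lrpn],\Tn)$-bimodule directly, whereas the paper repeats the Hom-space/Frobenius computation of part (a); the two formulations are equivalent.
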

 
 \begin{proof}
 \begin{enumerate}[(a)]
  \item Using Theorem~\ref{dvkp}(a) and (\ref{ind}),
  \begin{eqnarray*}
  \T^{(\tla,\delta)}_{k}&\cong& \mbox{Hom}_{\Grpn}(V^{\otimes k},V^{(\tla,\delta)})\\
  &\cong& \mbox{Hom}_{\Grpn}({\mbox{Ind}^{\Grpn}_{\Lrpn}}((\mbox{Res}^{\Grpn}_{\Lrpn}V^{\otimes (k-1)})\otimes \sigma),V^{(\tla,\delta)}).
  \end{eqnarray*}
  From the above isomorphism and Frobenius reciprocity, we get
  \begin{align}\label{restkh}
  \mbox{Res}_{\Tmn}^{\Tn} \T^{(\tla,\delta)}_{k}
  &\cong  \mbox{Res}_{\Tmn}^{\Tn}\mbox{Hom}_{\Lrpn}((\mbox{Res}^{\Grpn}_{\Lrpn}V^{\otimes (k-1)})\otimes \sigma,
  {\mbox{Res}^{\Grpn}_{\Lrpn}}V^{(\tla,\delta)}) \nonumber \\
  &\cong  \mbox{Hom}_{\Lrpn}(V^{\otimes (k-\frac{1}{2})}, \sigma' \otimes {\mbox{Res}^{\Grpn}_{\Lrpn}}V^{(\tla,\delta)})
  \end{align}
where $\sigma'$ is the contragredient representation of $\sigma$. The $L(r,1,n)$-representation  $\sigma'= \sigma^{-1} = \underbrace{\sigma \otimes \cdots \otimes \sigma}_{(r-1)\mbox{-fold}}$ and thus, $\sigma'$ is parametrized by $((n-1),\emptyset,\ldots,\emptyset,\emptyset^{n})\in \Y^{n}(r,n)$. First using Theorem~\ref{brgpl} and then by the repeated application of Lemma \ref{tp}, we compute $\sigma' \otimes {\mbox{Res}^{\Grpn}_{\Lrpn}}V^{(\tla,\delta)}$. Then, from (\ref{restkh}) and Theorem~\ref{dvkp}(b), we have the restriction rule.

\item The proof is along the similar lines as that of part (a) using Theorem \ref{dvkp}(b), (\ref{res}), Theorem \ref{brgpl}, Frobenius reciprocity and Theorem \ref{dvkp}(a).
\qedhere
 \end{enumerate}
\end{proof}

Orellana \cite[p.\,614]{Ore07} describes the rule for recursively constructing Bratteli diagram for the tower of algebras 
$$ \T_0(r,1,n) \subseteq \T_1(r,1,n) \subseteq \T_2(r,1,n) \subseteq \cdots.$$
We consider the tower of Tanabe algebras 
\begin{equation}\label{bt} 
\T_0(r,p,n) \subseteq \T_{\frac{1}{2}}(r,p,n) \subseteq \T_1(r,p,n) \subseteq \T_{\frac{3}{2}}(r,p,n)  \subseteq \cdots \subseteq \T_{\lfloor \frac{n}{2} \rfloor}(r,p,n)
\end{equation}
and using Theorems \ref{decvk}, \ref{irrend} and Corollary \ref{brtkh}, construct the corresponding Bratteli diagram $\widehat{\T}(r,p,n)$ recursively by the following rule: For $l \in \frac{1}{2}\Z_{\geq 0}$, the vertices at $l$-th level of Bratteli diagram are elements of the set $\vo_l(r,p,n)$. A vertex $\mathpzc{v}_l$ at $l$-th level is connected by an edge with a vertex $\mathpzc{v}_{l+\frac{1}{2}}$ at $(l+\frac{1}{2})$-th level if and only if $\mathpzc{v}_{l+\frac{1}{2}}$ is obtained from $\mathpzc{v}_l$ while constructing $\vo_{l+\frac{1}{2}}(r,p,n)$ from $\vo_l(r,p,n)$. The  Bratteli diagram of Tanabe algebras is a simple graph.

\begin{remark}\label{cp}
For $t \in \Z_{\geq 0}$, $t \leq \lfloor \frac{n}{2} \rfloor $ and $ (\tla, \delta) \in \vo_t(r,p,n)$, the stabilizer subgroup $C_{\La}$ is non-trivial if and only if $(r,p,n) =(2,2,2k)$ and $t = k$; in this case $C_{\La} = \{1,-1\}$. Thus, for $n \geq 2k$, there is a one-to-one correspondence between the irreducible representations of the same degree occuring at $t$-th level in Bratteli diagrams for $\T_k(r,1,n)$ and $\Tn$ if and only if $(r,p,n, t) \neq (2,2,2k,k)$; the correspondence in terms of parametrization is $\La \mapsto (\tla, 1)$.
\end{remark}

We draw Bratteli diagram for $(r,p,n) = (2,2,4)$. Note that at level $k = 2$, a node parametrized by $(\tla, -1)$ also appears when $\La = ((2),(2))$ because $C_\La$ is nontrivial. 
\begin{center}
 \begin{tikzpicture}
 \node (a) at (-3,0) {$k=0:$};
 \node (b) at (-3,-1) {$k=\frac{1}{2}:$};
 \node (c) at (-3,-2) {$k=1:$};
 \node (d) at (-3,-3) {$k=\frac{3}{2}:$};
 \node (e) at (-3,-4) {$k=2:$};
  \node (1) at (5,0) {\tiny{$\bigg(((4), \emptyset), 1\bigg)$}};
  \node (2) at  (5,-1) {\tiny{$\bigg({((3)\,}^{4}, \emptyset) \bigg)$}};
  \node (3) at  (5,-2) {\tiny{$\bigg(((3),  (1)), 1\bigg)$}};
  \node (4) at (2,-3) {\tiny{$\bigg({((2)\,}^{4}, (1))\bigg)$}};
  \node (5) at (8,-3) {\tiny{$\bigg({((3)\,}, \emptyset \,^4)\bigg)$}};
  \node (6) at (-0.5,-4) {\tiny{$\bigg(((2), (2)),1\bigg)$}};
  \node (7) at (2,-4) {\tiny{$\bigg(((2), (2)),-1\bigg)$}};
  \node (8) at (5.3,-4) {\tiny{$\bigg(((2), (1,1)),1\bigg)$}};
  \node (9) at (8,-4) {\tiny{$\bigg(((4), \emptyset),1\bigg)$}};
  \node (10) at (10.5,-4) {\tiny{$\bigg(((3,1), \emptyset),1\bigg)$}};
  \draw (1) -- (2) -- (3) -- (4) -- (6);
  \draw (3) -- (5);
  \draw (4) -- (7);
  \draw (4) -- (8);
  \draw (5) -- (9);
  \draw (5) -- (10);
 \end{tikzpicture}
 \end{center} 

The following is a part of Bratteli diagram of Tanabe algebras when $(r,p,n) = (6,2,6)$. Note that  $\T_\frac{5}{2}^{\pv}$ corresponding to $\pv = \left(((4), \, \emptyset), \, ((1)^{6}, \,\emptyset),\,( \emptyset, \, \emptyset))\right)$ is of dimension two. (Due to limitation of width of the page, we have written the nodes at level $k = \frac{5}{2}$ in two rows.)

\begin{center}
 \begin{tikzpicture}
 \node (a) at (-3,0) {$k=0:$};
 \node (b) at (-3,-1.3) {$k=\frac{1}{2}:$};
 \node (c) at (-3,-2.6) {$k=1:$};
 \node (d) at (-3,-3.9) {$k=\frac{3}{2}:$};
 \node (e) at (-3,-5.2) {$k=2:$};
 \node (f) at (-3,-6.5) {$k=\frac{5}{2}:$};
  \node (1) at (5,0) {\tiny{$\bigg(((6), \, \emptyset), \,( \emptyset, \, \emptyset),\,( \emptyset, \, \emptyset)), 1\bigg)$}};
  \node (2) at  (5,-1.3) {\tiny{$\bigg({((5)\,}^{6}, \, \emptyset), \, (\emptyset, \,\emptyset)\,( \emptyset, \, \emptyset)) \bigg)$}};
  \node (3) at  (5,-2.6) {\tiny{$\bigg(((5)\,, \, \emptyset), \, ((1), \,\emptyset),\,( \emptyset, \, \emptyset)),1\bigg)$}};
  \node (4) at (2,-3.9) {\tiny{$\bigg({((4)\,}^{6}, \, \emptyset), \, ((1), \,\emptyset),\,( \emptyset, \, \emptyset))\bigg)$}};
  \node (5) at (8,-3.9) {\tiny{$\bigg({((5)\,}, \, \emptyset), \, (\emptyset \,^6, \,\emptyset),\,( \emptyset, \, \emptyset))\bigg)$}};
  \node (6) at (0,-5.2) {\tiny{$\bigg(((4)\,, \, \emptyset), \, ((2), \,\emptyset),\,( \emptyset, \, \emptyset)),1\bigg)$}};
  \node (7) at (4,-5.2) {\tiny{$\bigg(((4)\,, \, \emptyset), \, ((1,1), \,\emptyset),\,( \emptyset, \, \emptyset))),1\bigg)$}};
  \node (8) at (8,-5.2) {\tiny{$\bigg(((5)\,, \, (\emptyset)), \, (\emptyset, \,\emptyset),\,((1), \, \emptyset))),1\bigg)$}};
  \node (9) at (-0.2,-6.5){\tiny{$\bigg({((3)\,}^{6}, \, \emptyset), \, ((2), \,\emptyset),\,( \emptyset, \, \emptyset))\bigg)$}};
  \node (10) at (3,-7.3) {\tiny{$\bigg(((4), \, \emptyset), \, ((1)^{6}, \,\emptyset),\,( \emptyset, \, \emptyset))\bigg)$}};
  \node (11) at (5.2,-6.5) {\tiny{$\bigg(((3)^{6}, \, \emptyset), \, ((1,1), \,\emptyset),\,( \emptyset, \, \emptyset))\bigg)$}};
  \node (12) at (6.5,-7.3) {\tiny{$\bigg(((4)^{6}, \, \emptyset), \, (\emptyset, \,\emptyset),\,((1), \, \emptyset))\bigg)$}};
  \node (13) at (10,-7.3) {\tiny{$\bigg(((5), \, \emptyset), \, (\emptyset, \,\emptyset),\,( \emptyset^{6}, \, \emptyset))\bigg)$}};
  \draw (1) -- (2) -- (3) -- (4) -- (6);
  \draw (3) -- (5);
  \draw (4) -- (7);
  \draw (5) -- (8);
  \draw  (6)-- (9);
  \draw (6) -- (10);
  \draw (7) -- (10);
  \draw (7) -- (11);
  \draw (8) -- (12);
  \draw (8) -- (13);
 \end{tikzpicture}
 \end{center}

\section{Jucys-Murphy elements for Tanabe algebras}\label{jme}
 Recall from Section \ref{pre} that the Jucys-Murphy elements for $G(r,1,n)$ are:   
\begin{align*}
& X_1 = 0, \\
\mbox{ and }&  X_j = \sum_{i=1}^{j-1}\sum\limits_{l=0}^{r-1}\zeta_i^{l}\zeta_j^{-l}s_{ij},\;\;2\leq
j\leq n.
\end{align*}
For $T \in \Tab$, we have $$\sum\limits_{b \in \lambda}c(b) := \sum\limits_{j=1}^n c(b_{T}(j))$$
and it is easily seen that $\sum\limits_{b \in \lambda}c(b)$ is independent of the choice of $T \in \Tab$.
\begin{lemma}\phantomsection\label{ka}
\begin{enumerate}[(a)]
 \item For $r,n \in \Z_{\geq 0}$, $$ \ka_{r,n} :=  \frac{1}{r}\sum\limits_{l=0}^{r-1}\sum\limits_{1 \leq i < j \leq n}\zeta_i^{l}\zeta_j^{-l}s_{ij}$$
 is a central element of $\C[G(r,p,n)]$ and $\ka_{r,n} = \sum\limits_{b \in \lambda}c(b)$ as operators on $V^{(\tla,\delta)},$ the irreducible $G(r,p,n)$-module parametrized by $(\tla, \delta) \in \Y(m,p,n) \times C_\La$.
 \item For $r,n \in \Z_{\geq 0}$, $$ \ka_{r,n-1}  := \frac{1}{r}\sum\limits_{l=0}^{r-1}\sum\limits_{1 \leq i < j \leq n-1}\zeta_i^{l}\zeta_j^{-l}s_{ij}$$
 is a central element of $\C[L(r,p,n)]$ and $\ka_{r,n-1} = \sum\limits_{b \in \mu}c(b)$ as operators on $V^{\mu^{(n,i)}}$, the irreducible $L(r,1,n)$-module parametrized by $\mu^{(n,i)} \in \Y^n(r,n-1)$.
 \end{enumerate}
 \end{lemma}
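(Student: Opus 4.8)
The plan is to express $\ka_{r,n}$ and $\ka_{r,n-1}$ in terms of the Jucys--Murphy elements recalled at the beginning of this section, and then read off centrality and the scalar value simultaneously from Theorem \ref{gzt}. For part (a) I would first record the identity
$$\ka_{r,n} = \frac{1}{r}\sum_{j=1}^{n} X_j,$$
which is immediate from $X_1 = 0$ and $X_j = \sum_{i=1}^{j-1}\sum_{l=0}^{r-1}\zeta_i^{l}\zeta_j^{-l}s_{ij}$, since letting $j$ run makes the pairs $(i,j)$ exhaust all $1 \leq i < j \leq n$. At the same time I would note that each summand $\zeta_i^{l}\zeta_j^{-l}s_{ij}$ has total exponent $l+(-l)\equiv 0 \pmod p$ and so lies in $\Grpn$; hence $\ka_{r,n}$ genuinely belongs to $\C[\Grpn]$, which is what gives the centrality assertion content.

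Next I would compute the action on the irreducible $G(r,1,n)$-module $V^{\La}$. By Theorem \ref{gzt}, on any $GZ$-subspace $V_T$ the element $X_j$ acts by $rc(b_T(j))$, so $\sum_{j=1}^{n}X_j$ acts by $r\sum_{j=1}^{n}c(b_T(j)) = r\sum_{b\in\La}c(b)$, a value that is the same for every $T$ by the observation preceding the lemma. Thus $\ka_{r,n}$ acts on all of $V^{\La}$ by the single scalar $\sum_{b\in\La}c(b)$. Acting as a scalar on every irreducible $G(r,1,n)$-module, $\ka_{r,n}$ is central in $\C[G(r,1,n)]$ by the Fourier isomorphism (\ref{iso}); since it also lies in the subalgebra $\C[\Grpn]$, it commutes with every element of $\C[\Grpn]$ and is central there. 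Finally, Theorem \ref{eb} realizes $V^{(\tla,\delta)}$ as a submodule of $\mbox{Res}_{\Grpn}^{G(r,1,n)}V^{\La}$, so $\ka_{r,n}$ restricts to the same scalar $\sum_{b\in\La}c(b)$ on it, independently of $\delta$.

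Part (b) runs one rank lower. Here $\ka_{r,n-1} = \frac{1}{r}\sum_{j=1}^{n-1}X_j$ uses only $X_1,\ldots,X_{n-1}$, which are exactly the Jucys--Murphy elements of $G(r,1,n-1)$, and every summand lies in $\Lrpn = \Drpn\rtimes S_{n-1}$ because $s_{ij}$ fixes $n$ for $i,j\leq n-1$ and the total exponent is $0$. Writing the irreducible $L(r,1,n)$-module as $V^{\mu^{(n,i)}} = V^{\mu}\otimes\sigma_i$ with $V^{\mu}$ an irreducible $G(r,1,n-1)$-module, I would note that $\ka_{r,n-1}$ acts only on the factor $V^{\mu}$ and trivially on $\sigma_i$; applying Theorem \ref{gzt} to $G(r,1,n-1)$ then gives the scalar $\sum_{b\in\mu}c(b)$. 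The same ``scalar on every irreducible'' reasoning, now for $\C[L(r,1,n)]$, yields centrality of $\ka_{r,n-1}$ in $\C[L(r,1,n)]$ and hence in the subalgebra $\C[\Lrpn]$.

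These computations are routine once the Jucys--Murphy rewriting is in place; the points needing care are the bookkeeping that every summand actually lies in the smaller reflection group $\Grpn$ (resp. $\Lrpn$) --- this is what upgrades centrality in the wreath-product algebra to centrality in $\C[\Grpn]$ --- and, in part (b), tracking the tensor factor on which $\ka_{r,n-1}$ operates. An alternative to the Fourier-transform step would be to verify directly that $\ka_{r,n}$ is fixed under conjugation by the generators of $\Grpn$: conjugation by a permutation permutes the unordered pairs $\{i,j\}$, while conjugation by a diagonal element of $\Drpn$ shifts the exponent $l$ by a constant, and in either case the inner sum over $l\in\Z/r\Z$ is unchanged.
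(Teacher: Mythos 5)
Your proof is correct and follows essentially the same route as the paper: rewrite $\ka_{r,n}$ as $\frac{1}{r}\sum_{j}X_j$, apply Theorem \ref{gzt} to get the scalar $\sum_{b\in\La}c(b)$ on $V^{\La}$, observe that $\ka_{r,n}$ already lies in $\C[\Grpn]$, and pass to $V^{(\tla,\delta)}$ by restriction (with the analogous rank-lowered argument for part (b)). The only cosmetic difference is that the paper gets centrality in $\C[G(r,1,n)]$ by noting $\ka_{r,n}$ is (up to the factor $\frac{1}{r}$) the conjugacy class sum of $(1,\ldots,1,s_{12})$, whereas you deduce it from the scalar action on every irreducible; both are immediate.
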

\begin{proof}
\begin{enumerate}[(a)]
\item First, we consider the case $p=1$. Being the sum of elements in the conjugacy class of $(1,1, \ldots, 1, s_{12})$, $\ka_{r,n}$ is a central element of $\C[G(r,1,n)]$  and  
 \begin{equation*}
  \ka_{r,n} =  \frac{1}{r}\sum_{j=1}^{n} X_j.
 \end{equation*}
 For the irreducible $G(r,1,n)$-module $V^{\La}$ parametrized by $\La \in \Y(r,n)$, the canonical decomposition of $V^\La$ into $GZ$-subspaces is $$V^{\lambda} = \bigoplus\limits_{T \in {\it Tab}(r, \La)}V_T.$$  
Using Theorem \ref{gzt} for the action of Jucys-Murphy elements of $G(r,1,n)$, we have  $$\ka_{r,n}(v_T) = \sum\limits_{j=1}^n c(b_T(j))(v_T) = \bigg(\sum\limits_{b \in \lambda}c(b)\bigg)v_T$$ where $v_T$ is $GZ$-basis element corresponding to $T \in \Tab$. Thus, $\ka_{r,n} = \sum\limits_{b \in \lambda}c(b) $ as operators on $V^{\La}$.  
For a divisor $p$ of $r$, note that $\ka_{r,n} \in \C[\Grpn] \subseteq \C[G(r,1,n)]$. Thus, $\ka_{r,n}$ is a central element of $\C[\Grpn]$ also, and its action on the irreducible $\Grpn$-module $V^{(\tla, \delta)}$ follows by restricting the action of $G(r,1,n)$ on $V^\La$.

\item The proof is along the similar lines as that of part (a). \qedhere
\end{enumerate}
\end{proof}

Now, we describe a specific central element in $\C[G(2,2,2k)]$. The conjugacy class $C$ of the element $(1,1,\ldots,1,(12)(34) \cdots (2k-1, 2k))$ in $G(2,1,2k)$ consists of elements of the 
form $(a_1, a_2, \ldots, a_{2k}, (i_1, i_2)(i_3,i_4) \cdots (i_{2k-1}, i_{2k}))$ such 
that $(i_1, i_2), (i_3,i_4), \ldots, (i_{2k-1}, i_{2k})$ are mutually disjoint transpositions in $S_{2k}$,  and $a_{i_j}a_{i_{j+1}} = 1 $ for all $j = 1, 3, \ldots, 2k-1$ with $a_i \in G = \Z/2\Z = \{1,-1\}$ for all $i=1,\ldots, 2k$. Using \cite[Theorem 11]{Re77}, the conjugacy class of $(1,1,\ldots,1,(12)(34) \cdots (2k-1, 2k))$ in $G(2,1,2k)$ decomposes into two conjugacy classes, denoted by $C_1$ and $C_2$, in $G(2,2,2k)$ with 
representatives $$c_1 = (-1,-1,1, \ldots, 1, (12)(34) \cdots (2k-1, 2k))$$ $$ \mbox{ and } c_2 = (1,1, \ldots, 1, (12)(34) \cdots (2k-1, 2k))~~~~~~~~~~~~~~~$$ respectively.  The classes $C_1$ and $C_2$ consist of those elements in $C$ such that the number of pairs $(a_{i_j},a_{i_{j+1}}) = (-1,-1) $, where $j = 1, 3, \ldots, 2k-1$, is odd and even respectively. Let $z_1$ and $z_2$ be the conjugacy class sums of $C_1$ and $C_2$ respectively. Define $z = z_2 - z_1$ which is a central element in $\C[G(2,2,2k)]$.
 
\begin{lemma}\phantomsection\label{actz}
Let $\La \in \Y(2,2k)$.
\begin{enumerate}[(a)]
 \item For $\La \neq ((k),(k))$, $z=0$ as operators on the irreducible $G(2,2,2k)$-module $V^{(\tla, 1)}$. 
 \item For $\La = ((k),(k))$, $z = 2^k k!$ as operators on the irreducible $G(2,2,2k)$-module $V^{(\tla, 1)}$ and $z = -2^k k!$ as operators on the irreducible $G(2,2,2k)$-module $V^{(\tla, -1)}$.
\end{enumerate}
\end{lemma}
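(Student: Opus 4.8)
The plan is to exploit that $z=z_2-z_1$ is central in $\C[G(2,2,2k)]$, so it acts as a scalar on every irreducible module $V^{(\tla,\delta)}$, and to pin down that scalar by comparing $z$ with the associator $\mathpzc{A}^{\La}$ of Clifford theory. The crucial preliminary step is a conjugation relation. Let $t=(\zeta,1,\dots,1)\in G(2,1,2k)$; since $\delta_0(t)=\zeta=-1$ we have $t\notin G(2,2,2k)$, and a direct computation in the wreath product shows that conjugation by $t$ carries the representative $c_2$ of $C_2$ to the representative $c_1$ of $C_1$. As $t^2$ is the identity and $t$ normalises $G(2,2,2k)$, conjugation by $t$ is an involution interchanging $C_1$ and $C_2$, whence $t\,z\,t^{-1}=z_1-z_2=-z$ in $\C[G(2,1,2k)]$. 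First I would record this relation, since it drives both parts.

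For part (a) I would argue as follows. Among the tuples $\La\neq((k),(k))$ that are relevant here (by Remark \ref{cp} these all have trivial stabiliser) one has $\mathrm{sh}(\La)\neq\La$, so by Corollary \ref{shl2}(a) the stabiliser $C_{\La}$ is trivial and $\mathrm{Res}^{G(2,1,2k)}_{G(2,2,2k)}(V^{\La})=V^{(\tla,1)}$ is irreducible. Then $z$ acts on $V^{(\tla,1)}$ by a scalar $c$, i.e. $\rho_{\La}(z)=c\,\mathrm{Id}$, where $\rho_\La$ denotes the action on $V^\La$. Applying $\rho_{\La}$ to $tzt^{-1}=-z$ gives $\rho_{\La}(t)\,(c\,\mathrm{Id})\,\rho_{\La}(t)^{-1}=-c\,\mathrm{Id}$; as the left-hand side equals $c\,\mathrm{Id}$ we obtain $c=-c$, hence $c=0$.

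For part (b), $\La=((k),(k))$ is fixed by $\mathrm{sh}$, so $C_{\La}=\{1,-1\}$ and $V^{\La}$ restricts to $V^{(\tla,1)}\oplus V^{(\tla,-1)}$, the $\pm1$-eigenspaces of the associator $\mathpzc{A}^{\La}$ of \eqref{at}. I would realise $V^{((k),(k))}$ concretely as $\mathrm{Ind}_{G(2,1,k)\times G(2,1,k)}^{G(2,1,2k)}(\mathbf 1\boxtimes\epsilon)$, with basis $\{e_S\}$ indexed by the $k$-subsets $S\subseteq\{1,\dots,2k\}$, where a permutation $\sigma$ acts by $e_S\mapsto e_{\sigma S}$ and a diagonal element $(g_1,\dots,g_{2k})$ acts by the scalar $\prod_{i\in S^{c}}g_i$. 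Decomposing the class $C$ over perfect matchings gives $z=\sum_{M}\prod_{\{i,j\}\in M}u_{ij}$ with $u_{ij}=s_{ij}-\zeta_i\zeta_j s_{ij}$, and a short calculation yields $u_{ij}e_S=0$ when $i,j$ lie on the same side of the partition $(S,S^{c})$, and $u_{ij}e_S=2e_{S'}$ (with $S'$ obtained by swapping $i$ and $j$ across the partition) otherwise. Since the factors of a matching act on disjoint coordinates, only matchings all of whose pairs are split survive; there are exactly $k!$ of them, each contributes a factor $2^{k}$, and each sends $e_S$ to $e_{S^{c}}$. Hence $z\,e_S=2^{k}k!\,e_{S^{c}}$, i.e. $z=2^{k}k!\,B$ where $B\colon e_S\mapsto e_{S^{c}}$ is complementation. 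I would then verify that $B$ satisfies the characterising relations $B\rho_{\La}(h)=\delta_0(h)\rho_{\La}(h)B$ and $B^2=\mathrm{Id}$ of the normalised associator, and that under the identification of $e_S$ with the Gelfand--Tsetlin vector $v_T$ (with $S$ the set of entries of the first tableau) complementation is precisely $v_T\mapsto v_{\mathrm{sh}^{-1}(T)}$, giving $B=\mathpzc{A}^{\La}$. Thus $z=2^{k}k!\,\mathpzc{A}^{\La}$ acts as $+2^{k}k!$ on $V^{(\tla,1)}$ and $-2^{k}k!$ on $V^{(\tla,-1)}$.

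The routine part is the matching computation for $u_{ij}$. The delicate step, and the one I expect to be the main obstacle, is the sign bookkeeping in part (b): confirming that the combinatorial complementation $B$ coincides with the associator in its \emph{correct} normalisation rather than its negative, so that the $+1$-eigenspace is exactly $V^{(\tla,1)}$. I anticipate that matching the basis $\{e_S\}$ against the Gelfand--Tsetlin basis, together with the eigenvalue convention $\delta=\zeta^{l}$ of Theorem \ref{cd}(a), is where the care is required.
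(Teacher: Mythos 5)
Your proposal is correct, but it proves the lemma by a genuinely different route from the paper. For part (a) the paper works directly in the Gelfand--Tsetlin basis: it takes the row-major vector $v_R$, pairs off the contributions of $C_1$ and $C_2$ coming from permutations supported inside $R_1$ or inside $R_2$, and observes that cross-tableau transpositions contribute nothing to the coefficient of $v_R$, concluding $zv_R^{(1)}=0$. Your conjugation argument --- $t=(\zeta,1,\dots,1)$ normalises $G(2,2,2k)$, swaps $C_1$ with $C_2$, hence $tzt^{-1}=-z$ and the scalar must vanish --- is shorter and more conceptual, and avoids the case analysis entirely; like the paper, it silently uses that $C_\La$ is trivial for the relevant $\La\neq((k),(k))$ (via Remark \ref{cp}), so that $z$ acts by a single scalar on all of $V^{\La}$. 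For part (b) the paper again computes directly on $v_R\pm v_{\sh(R)}$, showing that only the $k!$ ``split'' matchings contribute, each through $2^{k-1}$ elements of $C_1$ and $2^{k-1}$ of $C_2$ acting by $\mp v_{\sh(R)}$ respectively; your factorisation $z=\sum_M\prod_{\{i,j\}\in M}(s_{ij}-\zeta_i\zeta_js_{ij})$ over perfect matchings, evaluated in the induced-module model with subset basis $\{e_S\}$, makes the constant $2^kk!$ completely transparent and is a cleaner way to organise the same count. The one place where your route carries an extra burden is the identification $B=\mathpzc{A}^{\La}$: the two relations $B\rho_{\La}(h)=\delta_0(h)\rho_{\La}(h)B$ and $B^2=\mathrm{Id}$ only determine the associator up to sign (since $-B$ satisfies them too), so you genuinely must carry out the basis-matching step you describe --- identifying $\C e_S$ with the one-dimensional $GZ$-subspace $V_T$ via Theorem \ref{gzt} and checking that complementation realises $v_T\mapsto v_{\sh^{-1}(T)}$ with a plus sign --- in order to conclude that the $+1$-eigenspace of $B$ is the summand the paper labels $V^{(\tla,1)}$ rather than $V^{(\tla,-1)}$. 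You flag exactly this as the delicate point and indicate how to settle it, so I regard the proposal as complete in outline; the paper sidesteps the issue by never leaving the $GZ$-basis, at the cost of a more hands-on bookkeeping of which group elements contribute to which matrix coefficients.
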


\begin{proof} 
In the following, we use \cite[Theorem 6.10]{MS16} to describe the action of $z$ on irreducible $G(2,2,2k)$-modules. 
The irreducible $G(2,1,2k)$-module $V^{\La}$ parametrized by $\La = (\La_1, \La_2) \in \Y(2,2k)$ has a $GZ$-basis element $v_R$ where $R = (R_1, R_2)$ is the element of $\mbox{Tab}(2,\La)$ written in row major order, i.e., the entries in $R_1$ are  in from $1 ,\ldots, |\La_1|$ and entries in $R_2$ are from $|\La_1|+1, \ldots, |\La_1|+|\La_2|$, both filled in row major order.
We have the following cases: 
\begin{enumerate}[(a)]
 \item $\La \neq ((k),(k))$: $V^{\La}$ remains irreducible as $G(2,2,2k)$-module and $V^{(\tla, 1)} = V^\La$ with $v_R^{(1)} = v_R$ as one of the basis elements using the parametrization of irreducible $G(2,2,2k)$-module in Theorem \ref{irrgp} and construction of basis of irreducible $G(2,2,2k)$-modules. 
 
 Let $Y$ be the set of those $\pi \in S_{2k}$ which can be written as a product of disjoint transpositions such that the elements of each transposition are either in $R_1$ or in $R_2$. For a fixed $\pi \in Y$, the action of $\sum(a_1, \ldots, a_{2k}, \pi)$ on $v_R^{(1)}$, where the sum is over all such elements in $C_1$, is equal to the action of $\sum(b_1, \ldots, b_{2k}, \pi)$ on $v_R^{(1)}$, where the sum is over all such elements in $C_2$. 
 
 The coefficient of $v_R$ in $t v_R^{(1)}$ is zero for any $t \in C_1 \cup C_2$ which is of the form $t = (a_1, a_2, \ldots, a_{2k}, (i_1, i_2)(i_3,i_4) \cdots (i_{2k-1}, i_{2k}))$ such that there is at least one transposition $(i_y, i_{y+1})$ with one of $i_y, i_{y+1}$ being from the entries in $R_1$ and the other being from the entries in $R_2$. 
 
 Thus, we have $zv_R^{(1)} = 0.$
 \item $\La = ((k),(k))$: $V^{\La}$ decomposes into two irreducible as $G(2,2,2k)$-modules $V^{(\tla, 1)}$ and $V^{(\tla, -1)}$ with $v_R^{(1)} = v_R+v_{\sh(R)}$ and $v_R^{(-1)} = v_R-v_{\sh(R)}$ as one of their basis elements respectively. Analogous to part (a), for a fixed $\pi \in Y$, the action of $\sum(a_1, \ldots, a_{2k}, \pi)$ on $v_R$ and $v_{\sh(R)}$, where the sum is over all such elements in $C_1$, is equal to the action of $\sum(b_1, \ldots, b_{2k}, \pi)$ on $v_R$ and $v_{\sh(R)}$, where the sum is over all such elements in $C_2$, respectively.
 
 Let $P$ be the set of those $\beta \in S_{2k}$ which can be written as a product of disjoint transpositions such that one element of each transposition is from $1,\ldots,k$ and the other one is from $k+1, \ldots, 2k$. The order of $P$ is $k!$. For $(a_1, \ldots, a_{2k}, \beta) \in C_1$ and $\beta \in P$, $(a_1, \ldots, a_{2k}, \beta)v_R = - v_{\sh(R)}$ and $(a_1, \ldots, a_{2k}, \beta)v_{\sh(R)} = - v_R$.
 For $(a_1, \ldots, a_{2k}, \beta) \in C_2$ and $\beta \in P$, $(a_1, \ldots, a_{2k}, \beta)v_R = v_{\sh(R)}$ and $(a_1, \ldots, a_{2k}, \beta)v_{\sh(R)} = v_R$.
 
 For those elements $(a_1, \ldots, a_{2k}, \gamma) \in C_1 \cup C_2$ such that $\gamma \notin Y \cup P$, the coefficients of both $v_R$ and $v_{\sh(R)}$ in both $(a_1, \ldots, a_{2k}, \gamma)v_R$ and $(a_1, \ldots, a_{2k}, \gamma)v_{\sh(R)}$ are zero.
 Thus,
 $$ z v_R^{(1)} = (2^k k!) v_R^{(1)} \mbox{ and } z v_R^{(-1)} = -(2^k k!) v_R^{(-1)}. $$
 Thus, we get the scalars as stated in the theorem.  \qedhere
 \end{enumerate}

\end{proof}

Assume that $S$ is a subset of $\{1, 2, \ldots, k\}$, $I$ is a subset of $S \bigcup S'$ and $I^c$ denotes the complement of $I$ in $S \bigcup S'$, where $S'$ is the set of all $j'$ such that $j \in S$. Define the elements $b_S$ and $ d_I$ of the partition monoid $A_k$:
\begin{align*}
 b_S = \{S \bigcup S', \{l,l'\}_{l \notin S} \} \mbox{ and } d_I = \{I, I^c,  \{l,l'\}_{l \notin S} \}.
\end{align*}
Thus, $b_S \in \Pi_k(r)$. Also, it is easy to see that $$d_I = d_{I^c}, \; d_{S \bigcup S'} = d_{\emptyset} = b_S,  \;  \mbox{ and }  d_{\{l,l'\}} = d_{\{l,l'\}^c} = b_{S\setminus\{l\}}.$$ 

\begin{example}
For $k=6$, $S=\{1,2,4\}$, and $I=\{1,4'\}\subset S\cup S'$, $b_S$ and $d_I$ are:

  \begin{tikzpicture}[scale=0.9,
    mycirc/.style={circle,fill=black, minimum size=0.1mm, inner sep = 1.5pt}
    ]
    \node (1) at (-1,0.5) {$b_S =$};
    \node[mycirc,label=above:{$1$}] (n1) at (0,1) {};
    \node[mycirc,label=above:{$2$}] (n2) at (1,1) {};
    \node[mycirc,label=above:{$3$}] (n3) at (2,1) {};
    \node[mycirc,label=above:{$4$}] (n4) at (3,1) {};
    \node[mycirc,label=above:{$5$}] (n5) at (4,1) {};
    \node[mycirc,label=above:{$6$}] (n6) at (5,1) {};
    \node[mycirc,label=below:{$1'$}] (n1') at (0,0) {};
    \node[mycirc,label=below:{$2'$}] (n2') at (1,0) {};
    \node[mycirc,label=below:{$3'$}] (n3') at (2,0) {};
    \node[mycirc,label=below:{$4'$}] (n4') at (3,0) {};
    \node[mycirc,label=below:{$5'$}] (n5') at (4,0) {};
    \node[mycirc,label=below:{$6'$}] (n6') at (5,0) {};
    \draw (n1) -- (n2);
    \draw (n1) -- (n1');
    \draw (n1')-- (n2');
    \draw (n2) .. controls(2,0.7) .. (n4);
    \draw (n2').. controls(2,0.3) .. (n4');
    \draw (n3) -- (n3');
    \draw (n5) -- (n5');
    \draw (n6) -- (n6');
    \node(a) at (5.4,0.3) {,};

    \node (1) at (6.5,0.5) {$d_I =$};
    \node[mycirc,label=above:{$1$}] (n7) at (7.5,1) {};
    \node[mycirc,label=above:{$2$}] (n8) at (8.5,1) {};
    \node[mycirc,label=above:{$3$}] (n9) at (9.5,1) {};
    \node[mycirc,label=above:{$4$}] (n10) at (10.5,1) {};
    \node[mycirc,label=above:{$5$}] (n11) at (11.5,1) {};
    \node[mycirc,label=above:{$6$}] (n12) at (12.5,1) {};
    \node[mycirc,label=below:{$1'$}] (n7') at (7.5,0) {};
    \node[mycirc,label=below:{$2'$}] (n8') at (8.5,0) {};
    \node[mycirc,label=below:{$3'$}] (n9') at (9.5,0) {};
    \node[mycirc,label=below:{$4'$}] (n10') at (10.5,0) {};
    \node[mycirc,label=below:{$5'$}] (n11') at (11.5,0) {};
    \node[mycirc,label=below:{$6'$}] (n12') at (12.5,0) {};
    \draw (n7) -- (n10');
    \draw (n8) -- (n7');
    \draw (n8) .. controls(9.5,0.7) .. (n10);
    \draw (n7') .. controls(8,0.3) .. (n8');
    \draw (n9) -- (n9');
    \draw (n11) -- (n11');
    \draw (n12) -- (n12');
    \node(a) at (12.8,0.3) {.};   
    
\end{tikzpicture}
\end{example}

Following the notation of Section\ref{ta}, let $N(I)$ and $M(I)$ denote the number of elements in top row and bottom row of the block $I$ respectively. 
For $k \in \Z_{\geq 0}$, we define an element $Z_{k,r} \in \T_k(r,1,n) \subseteq \Tn$:
\begin{equation*}
 Z_{k,r} = \binom{n}{2}+\sum\limits_{|S| \geq 1}(-1)^{|S|}\bigg((n-1)b_S + \sum\limits_{N(I) \equiv M(I) (mod~r)}(-1)^{N(I)-M(I)}(d_I - b_S)\bigg),
\end{equation*}
where the outer sum is over all the nonempty subsets $S$ of $\{1,2,\ldots,k\}$ and the inner sum is over $I \subseteq S \bigcup S'$ such that $d_I \in \Pi_k(r)$ and $d_I \neq b_S$. 

Define an element $Z_{k+\frac{1}{2},r} \in \T_{k+\frac{1}{2}}(r,1,n) \subseteq \Thn$ as follows:
\begin{align*}
 Z_{k+\frac{1}{2},r}  & = \binom{n}{2}  + \sum\limits_{\substack{|S| \geq 1\\ k+1 \notin S}}(-1)^{|S|}\bigg((n-1)b_S + \sum\limits_{N(I) \equiv M(I) (mod~r)}(-1)^{N(I)-M(I)}(d_I - b_S)\bigg) \\ 
&+ \sum\limits_{\substack{ k+1 \in S \\|S| \geq 1}}(-1)^{|S|}\bigg((n-1)b_S + \sum\limits_{\substack{\{k+1, (k+1)'\} \subset I \mbox{\tiny{or}} I^c \\ N(I) \equiv M(I) (mod~r)}}(-1)^{N(I)-M(I)}(d_I - b_S)\bigg).
\end{align*}
where the first outer sum is over all the nonempty subsets $S$ of $\{1,2,\ldots,k+1\}$ such that $k+1 \notin S$ and the inner sum in that is over $I \subseteq S \bigcup S'$ such that $d_I \in \Pi_{k+\frac{1}{2}}(r)$ and $d_I \neq b_S$; 
and the second outer sum is over all the nonempty subsets $S$ of $\{1,2,\ldots,k, k+1\}$ such that $k+1 \in S$ and the inner sum in that is over $I \subseteq S \bigcup S'$ such that 
\begin{center}
$\{k+1, (k+1)'\} \subseteq I \mbox{ or } I^c$, $d_I \in \Pi_{k+\frac{1}{2}}(r)$ and $d_I \neq b_S$.
\end{center}

The elements $Z_{k,r}$ and $Z_{k+\frac{1}{2},r}$ and the idea of the proof of the next theorem are from the online notes \cite{Ram2010M}.

\begin{thm}\phantomsection\label{zkr}
\begin{enumerate}[(a)]

\item Let $k \in \Z_{\geq 0}$. Then,
\begin{equation*}
 \ka_{r,n} = Z_{k,r} \mbox{ and } \ka_{r,n-1} = Z_{k+\frac{1}{2}, r} 
\end{equation*}
as operators on $\Vk$ and $\Vkh$ respectively.

\item Let $k \in \Z_{\geq 0}$. Then $Z_{k,r}$ is a central element of $\Tn$. For $n \geq 2k$ $$Z_{k,r} = \sum\limits_{b \in \lambda}c(b) $$ as operators on $\T_k^{(\tla, \delta)}$, the irreducible $\Tn$-module parametrized by 
$(\tla, \delta) \in \vo_k(r,p,n)$. 

Also, $Z_{k+\frac{1}{2}, r}$ is a central element of $\Thn$. For $n \geq 2k+1$, $$Z_{k+\frac{1}{2},r} = \sum\limits_{b \in \mu}c(b) $$ as operators on $\T_{k+\frac{1}{2}}^{\tm^{(n,i,j)}}$, the irreducible $\Thn$-module parametrized by $\tm^{(n,i,j)} \in \vo_{k+\frac{1}{2}}(r,p,n)$.
\end{enumerate}
\end{thm}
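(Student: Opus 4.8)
The plan is to prove part (a) by a direct computation of both sides on an arbitrary simple tensor $\vi$, and then to deduce part (b) from part (a) together with Schur-Weyl duality (Theorem \ref{swd}), the bimodule decomposition (Theorem \ref{dvkp}), and the action of $\ka_{r,n}$ recorded in Lemma \ref{ka}.

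For part (a), first I would evaluate the group-algebra element $\ka_{r,n}$ on a basis vector. Writing $N_a$ for the number of indices among $i_1, \ldots, i_k$ equal to $a$, the diagonal action of $\zeta_a^l \zeta_b^{-l} s_{ab}$ multiplies $\vi$ by $\zeta^{l(N_b - N_a)}$ and interchanges the values $a$ and $b$ throughout the index string; summing over $l$ collapses the inner sum to $r$ exactly when $N_a \equiv N_b~(\mbox{mod }r)$ and to $0$ otherwise. Hence
$$\ka_{r,n}(\vi) = \sum_{\substack{1 \leq a < b \leq n \\ N_a \equiv N_b~(\mbox{mod }r)}} P_{ab}(\vi),$$
where $P_{ab}$ denotes the operator interchanging the values $a$ and $b$ in the tensor. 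Next I would compute $Z_{k,r}(\vi)$ using the matrix entries (\ref{dcoeff}): $b_S$ acts as the projector onto the subspace on which the positions in $S$ carry a common value, while the diagrams $d_I$ act by summing over the possible common values of their two blocks $I$ and $I^c$, thereby producing the value-swapping and value-relabelling operators. The two sides are then reconciled by inclusion-exclusion over the nonempty subsets $S \subseteq \{1, \ldots, k\}$: the signs $(-1)^{|S|}$ isolate the exact union of two value-classes to be interchanged, the residue condition $N(I) \equiv M(I)~(\mbox{mod }r)$ together with the sign $(-1)^{N(I)-M(I)}$ reproduces the congruence $N_a \equiv N_b~(\mbox{mod }r)$ forced by the character sum over $l$, and the constant $\binom{n}{2}$ with the terms $(n-1)b_S$ absorbs the contributions of value-pairs absent from $\vi$, on which $P_{ab}$ either fixes $\vi$ or merely relabels a single class.

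For part (b), I would fix $n \geq 2k$, so that $\psi_k \colon \Tn \xrightarrow{\sim} \mbox{End}_{\Grpn}(\Vk)$ by Theorem \ref{swd}(a). By part (a) the operator $\psi_k(Z_{k,r})$ coincides with the action of $\ka_{r,n}$ on $\Vk$. Since $\ka_{r,n}$ is central in $\C[\Grpn]$ and acts on the irreducible module $V^{(\tla,\delta)}$ by the scalar $\sum_{b \in \lambda} c(b)$ (Lemma \ref{ka}(a)), the decomposition $\Vk \cong \bigoplus_{(\tla,\delta)} \big(V^{(\tla,\delta)} \otimes \T_k^{(\tla, \delta)}\big)$ of Theorem \ref{dvkp}(a) shows that this operator equals $\bigoplus_{(\tla,\delta)} \big(\sum_{b \in \lambda} c(b)\big)\,\mbox{Id}$. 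As $\ka_{r,n}$ acts only on the $\Grpn$-factors while $Z_{k,r}$ acts only on the Tanabe factors, comparing the two actions forces $Z_{k,r}$ to act on $\T_k^{(\tla,\delta)}$ by the scalar $\sum_{b \in \lambda} c(b)$; acting diagonally on every irreducible module, $Z_{k,r}$ is central. To remove the hypothesis $n \geq 2k$ from the centrality assertion, I would invoke the polynomiality in $n$ of the structure constants of $\Tn$ (Lemma \ref{mult} and the "fix $k$, vary $n$" argument in its proof): the coefficients of $[Z_{k,r}, x_d]$ in the $x_d$-basis are polynomials in $n$ vanishing for all $n \geq 2k$, hence identically zero. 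The half-integer statement is proved verbatim, replacing $\ka_{r,n}$, $\Grpn$, $\Tn$, Theorem \ref{swd}(a), Theorem \ref{dvkp}(a) and Lemma \ref{ka}(a) by $\ka_{r,n-1}$, $\Lrpn$, $\Thn$, Theorem \ref{swd}(b), Theorem \ref{dvkp}(b) and Lemma \ref{ka}(b), and using that $Z_{k+\frac{1}{2},r}$ lies in $\T_{k+\frac{1}{2}}(r,1,n) \subseteq \Thn$.

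The main obstacle is the combinatorial heart of part (a): verifying that the signed inclusion-exclusion defining $Z_{k,r}$ reassembles exactly into the conditional value-swap sum for $\ka_{r,n}$. In particular one must check that the diagonal contributions balance — that the binomial constant and the $(n-1)b_S$ terms conspire, through identities of the type $\binom{n}{2} = \binom{n-g}{2} + \cdots$ where $g$ is the number of distinct values in $\vi$, to produce the correct coefficient of $\vi$ itself — and that the residue-and-sign bookkeeping on the blocks $I$ matches the $\zeta$-character sum term by term. Once part (a) is established, part (b) is a formal consequence of Schur-Weyl duality and the centralizer theorem.
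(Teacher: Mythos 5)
Your proposal follows essentially the same route as the paper: part (a) by direct evaluation on simple tensors, where the character sum $\sum_{l}\zeta^{l(N(I)-M(I))}$ produces the residue condition and inclusion--exclusion over subsets $S$ and $I \subseteq S \cup S'$ matches the diagram operators $b_S$, $d_I$ against the conditional value-swap operators; and part (b) by combining Theorem \ref{swd}, Theorem \ref{dvkp}, Lemma \ref{ka} and polynomiality of the structure constants in $n$, exactly as the paper does. The only caveat is that you defer the combinatorial verification at the heart of (a) -- this is precisely the computation the paper carries out by expanding $(1-E_{ii}-E_{jj}+\zeta^{l}E_{ij}+\zeta^{-l}E_{ji})$ across the $k$ tensor factors and evaluating the coefficients $c_{S,I}$ case by case (including the separate treatment of the extra factor $v_n$ for the half-integer statement), and it goes through as you describe.
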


\begin{proof}
\begin{enumerate}[(a)]
\item We express the action of $\ka_{r,n}$ in terms of matrices $E_{i,j}$.
\begin{align}\label{kact}
       & 2 \ka_{r,n}(\vi) \nonumber \\ 
     = &\frac{1}{r}\sum\limits_{l=0}^{r-1}\sum\limits_{1 \leq i \neq j \leq n}\g_i^{l}\g_j^{-l}s_{ij} v_{i_1} \otimes \g_i^{l}\g_j^{-l}s_{ij} v_{i_2}
       \otimes \cdots \otimes \g_i^{l}\g_j^{-l}s_{ij} v_{i_k} \nonumber \\
     = &\frac{1}{r}\sum\limits_{l=0}^{r-1}\sum\limits_{1 \leq i \neq j \leq n} 
     (1-E_{ii}-E_{jj}+\g^{l}E_{ij}+\g^{-l}E_{ji})v_{i_1} \otimes \cdots \nonumber \\ 
     & ~~~~~~~~~~~~~~~~\otimes (1-E_{ii}-E_{jj}+\g^{l}E_{ij}+\g^{-l}E_{ji})v_{i_k}.
       \end{align}

Let $S$ be a subset of $\{1,2, \ldots, k\}$ such that $S^c$ corresponds to the tensor positions where $1$ is acting, $I \subset S \bigcup S'$ corresponds to the tensor positions that must equal $i$ and $I^c$ corresponds to the tensor positions that must equal $j$. Let 
\begin{align*}
c_{S,I} := & \prod_{t \in S^c} (\delta_{i_ti_{t'}})(-1)^{\#(\{t,t'\}\subset I) + \#(\{t,t'\}\subset I^c)} \\ 
& \times \zeta^{l(\#(\{t \in I,t' \in I^c\}) - \#(\{t \in I^c ,t' \in I\}))}\prod_{t \in I}(\delta_{i_ti})\prod_{t \in I^c}(\delta_{i_tj}) 
\end{align*}
Thus, expanding (\ref{kact}), we get that $2 \ka_{r,n}(\vi)$ equals  
\begin{equation}\label{exp}
\frac{1}{r}\sum\limits_{S \subset \{1,\ldots, k\}} \sum\limits_{\itp}\sum\limits_{l=0}^{r-1}\sum\limits_{1 \leq i \neq j \leq n}\sum\limits_{I \subset S \bigcup S'}c_{S,I}(\vip).
\end{equation}

Now, we take various cases of $S$ and $I$ to compute the above expression (\ref{exp}). Let $|S| = 0$, then $I$ is empty set and $$c_{S,I} = c_{\emptyset, \emptyset} = \prod\limits_{t \in \{1, \ldots,k \}}\delta_{i_ti_{t'}}.$$ The corresponding summand in (\ref{exp}) is $$(n^2-n)(\vi).$$

Assume that $|S| \geq 1$ and we consider various cases of $I \subset S \bigcup S'$. Since the whole sum is symmetric in $i$ and $j$ and in $I$ and $I^c$, therefore, the sum obtained is same when $I$ is interchanged with $I^c$. If $I = S \bigcup S'$, then $$c_{S,I} =  \prod_{t \in S^c} (\delta_{i_ti_{t'}})(-1)^{|S|}\prod_{t \in S \bigcup S'}(\delta_{i_ti}),$$ and thus the corresponding summand in expression (\ref{exp}) is 
\begin{align*}
& \frac{1}{r} \sum\limits_{\itp}\sum\limits_{l=0}^{r-1}(n-1)\sum\limits_{1 \leq i \leq n}c_{S,I}(\vip) \\ 
= & (n-1)(-1)^{|S|}b_S(\vi).
\end{align*}
We get an identical summand for the case $I = \emptyset$.

Consider $I \subsetneq S \bigcup S'$ and $N(I) \not\equiv M(I) (\mbox{mod }r)$. Let $$T(I) = \{t \in \{1,2,\ldots, k\} \mid t \in I \},$$ $$D(I) = \{t' \in \{1',2',\ldots, k'\} \mid t' \in I \}, $$ $$ \mbox{and }B(I) = \{t \in I \mid t' \in I \}.~~~~~~~~~~$$ Thus, $N(I) = |T(I)|, ~M(I) = |D(I)|$. Also, we can see that $$\#(\{t \in I,t' \in I^c\}) = N(I) - |B(I)|$$ $$ \mbox{and }\#(\{t \in I^c ,t' \in I\}) = M(I) - |B(I)|.$$ Thus,
\begin{align*}
 \#(\{t \in I,t' \in I^c\})-\#(\{t \in I^c ,t' \in I\}) = N(I)-M(I) \not\equiv 0(\mbox{mod }r).
\end{align*}
In this case, since the sum of all the $r$-th roots of unity is zero, so the summand for all such $I$ in expression (\ref{exp}) is zero. 

Now, consider those subsets $I \subsetneq S \bigcup S'$ such that $N(I) \equiv M(I) (\mbox{mod }r)$. Define $B(I)' := \{t'\mid t \in B(I)\}$, thus $$ \{t \in \{1,2,\ldots, k\} \mid \{t,t'\}\subset I\} = B(I), $$ $$\mbox{ and } \{ t \in \{1,2,\ldots, k\} \mid \{t,t'\}\subset I^c\} = (S \setminus T(I)) \setminus (D(I) \setminus B(I)').$$ This implies that  
\begin{align*}
(-1)^{\#(\{t,t'\}\subset I) + \#(\{t,t'\}\subset I^c)} & = (-1)^{|B(I)| + (|S| - N(I)) -(M(I) - |B(I)|)} \\
& = (-1)^{|B(I)| + (|S| - N(I))+(M(I) - |B(I)|)} \\
& = (-1)^{|S| - (N(I)- M(I))} \\
& = (-1)^{|S| + (N(I) - M(I))}.
\end{align*}
Thus, for the subsets $I$ such that $N(I) \equiv M(I) (\mbox{mod }r)$, we get 
the summand in  expression (\ref{exp}) as:
\begin{align*}
 &\frac{1}{r}(-1)^{\#(\{t,t'\}\subset I) + \#(\{t,t'\}\subset I^c)} \sum\limits_{\itp}\sum\limits_{l=0}^{r-1}\prod_{t \in S^c} (\delta_{i_ti_{t'}})\\ 
 & \times \sum\limits_{1 \leq i \neq j \leq n} \prod_{t \in I} (\delta_{i_ti})\prod_{t \in I^c}(\delta_{i_tj})(\vip)  \\
 & =  (-1)^{|S| + (N(I) - M(I))}\sum\limits_{\itp}\prod_{t \in S^c} (\delta_{i_ti_{t'}}) \left(\sum\limits_{1 \leq i, j \leq n} \prod_{t \in I} (\delta_{i_ti})\prod_{t \in I^c}(\delta_{i_tj})\right.\\ 
 &\left. -\sum\limits_{1 \leq i = j \leq n} \prod_{t \in I} (\delta_{i_ti})\prod_{t \in I^c}(\delta_{i_tj})\right)(\vip)\\
 & = (-1)^{|S| + (N(I) - M(I))}(d_I-{b_S})(\vi).
\end{align*}
Also, for the subsets $I$ such that $N(I) \equiv M(I) (\mbox{mod }r)$, we also have $N(I^c) \equiv M(I^c) (\mbox{mod }r)$ and thus we get an identical summand by interchanging $I$ and $I^c$.

Combining all the above cases together, we get that, as operators on $\Vk$, 
\begin{equation*}
 \ka_{r,n} = Z_{k,r}.
\end{equation*}

Now we prove the second part of (b). We have 
\begin{equation*}
 (1-E_{ii}-E_{jj}+E_{ii}E_{jj})(v_n) = 
 \begin{cases}
   0, & \mbox{if } i = n \mbox{ or } j=n, \\
   v_n, & \mbox{otherwise.}
  \end{cases}
\end{equation*}
Thus, 
\begin{align}\label{kact1}
       & 2 \ka_{r,n-1}(\vi \otimes v_n) \nonumber \\ 
     = & \frac{1}{r}\sum\limits_{l=0}^{r-1}\sum\limits_{1 \leq i \neq j \leq n-1}\g_i^{l}\g_j^{-l}s_{ij}(\vi \otimes v_n) \nonumber \\
     = &\frac{1}{r}\sum\limits_{l=0}^{r-1}\sum\limits_{1 \leq i \neq j \leq n}\g_i^{l}\g_j^{-l}s_{ij} v_{i_1} \otimes \g_i^{l}\g_j^{-l}s_{ij} v_{i_2}
       \otimes \nonumber \\
       & \cdots \otimes \g_i^{l}\g_j^{-l}s_{ij} v_{i_k} \otimes (1-E_{ii}-E_{jj}+E_{ii}E_{jj})(v_n) \nonumber \\
     = & \frac{1}{r}\sum\limits_{l=0}^{r-1}\sum\limits_{1 \leq i \neq j \leq n}\g_i^{l}\g_j^{-l}s_{ij}(\vi)\otimes v_n \nonumber \\
     + &  \frac{1}{r}\sum\limits_{l=0}^{r-1}\sum\limits_{1 \leq i \neq j \leq n} 
     \bigg((1-E_{ii}-E_{jj}+\g^{l}E_{ij}+\g^{-l}E_{ji})v_{i_1} \otimes \cdots  \nonumber \\ 
     &  \otimes (1-E_{ii}-E_{jj}+\g^{l}E_{ij}+\g^{-l}E_{ji})v_{i_k}\bigg) \otimes (-E_{ii}-E_{jj})v_n \nonumber \\
     + & \frac{1}{r}\sum\limits_{l=0}^{r-1}\sum\limits_{1 \leq i \neq j \leq n}\g_i^{l}\g_j^{-l}s_{ij} v_{i_1} \otimes \g_i^{l}\g_j^{-l}s_{ij} v_{i_2}
       \otimes \cdots \otimes \g_i^{l}\g_j^{-l}s_{ij} v_{i_k} \otimes +E_{ii}E_{jj}v_n.
     \end{align}

In the expression (\ref{kact1}), the first summand is equal to $2\ka_{r,n}(\vi)$ which has been calculated in the first part of (b). Since $i \neq j$, so the last summand is zero. Expanding the middle summand gives 

\begin{equation*}
\frac{1}{r}\sum\limits_{\substack{S \subset \{1,\ldots, k+1\} \\ k+1 \in S} } \sum\limits_{\itp}\sum\limits_{l=0}^{r-1}\sum\limits_{1 \leq i \neq j \leq n}\sum\limits_{\substack{I \subset S \bigcup S' \\ \{k+1, (k+1)'\} \subset I \mbox{\tiny{or}} I^c}}c_{S,I}(\vip).
\end{equation*}

The case $|S| = 0$ does not arise because $k+1 \in S$. For $|S| >1$, we consider various cases of $I \subset S \bigcup S'$ which are: 
\begin{align*}& (i) I = S \bigcup S', \\ 
& (ii)\{k+1,(k+1)'\} \subset I \subsetneq S \bigcup S',~N(I) \not\equiv M(I) (\mbox{mod }r), \mbox{and } \\
& (iii) \{k+1,(k+1)'\} \subset I \subsetneq S \bigcup S',~N(I) \equiv M(I) (\mbox{mod }r).
\end{align*}
and identical summands arise when $I$ is interchanged with $I^c$ in the cases $(i), (ii), $ and $(iii)$. Thus, the middle summand gives us 
\begin{equation*}
 \sum\limits_{\substack{ k+1 \in S \\|S| \geq 1}}(-1)^{|S|}2\bigg((n-1)b_S + \sum\limits_{\substack{\{k+1, (k+1)'\} \subset I \mbox{\tiny{or}} I^c \\ N(I) \equiv M(I) (mod~r)}}(-1)^{N(I)-M(I)}(d_I - b_S)\bigg).
\end{equation*}

So, as opeartors on $\Vkh$, we have $\ka_{r,n-1} = \Z_{k+\frac{1}{2},r}.$

\item  Using Theorem \ref{dvkp}(a) and using Lemma \ref{ka}(a), we get that for $n \geq 2k$, $Z_{k,r}$ acts on $\T_k^{(\tla, \delta)}$ as the constant stated in the theorem. Therefore, $Z_{k,r}$ is a central element of $\Tn$ for $n\geq 2k$. Since the multiplication of elements of $\T_k(r,p,n)$ is a polynomial in $n$, therefore $$Z_{k,r} x_d = x_d Z_{k,r} \mbox{ for all } x_d \in \T_k(r,p,n) \mbox{ and } n \geq 2k  \; \; \; \Longrightarrow  \; \; \; Z_{k,r} x_d = x_d Z_{k,r}$$ for all $x_d \in \T_k(r,1,n)$ and for all $n$. 

Theorem \ref{dvkp}(b) and Lemma \ref{ka}(b) along with the arguments similar to the above imply the result for $Z_{k+\frac{1}{2},r}.$
\qedhere
\end{enumerate}
\end{proof}

In the light of Remarks \ref{222k} and \ref{cp}, $(r,p,n) = (2,2,2k)$ is the special case. 
Define $M_{k,2,2} := x_d \in \T_k(2,2,2k)$, where $d$ is the only element in $\Lambda_k(2,2,2k)$ and $d$ consists of $2k$ blocks, each vertex being a block. The element $M_{k,2,2}$ is a central element of $\T_k(2,2,2k)$.

\begin{thm}\phantomsection\label{mk22}
\begin{enumerate}[(a)]
\item Let $k \in \Z_{\geq 0}$. Then,
$M_{k,2,2} = \frac{1}{2^k}z$
as operators on $\Vk$.

\item Let $k \in \Z_{\geq 0}$. Then, for $\La \neq ((k),(k))$, $M_{k,2,2} = 0$ as operators on the irreducible $\T_k(2,2,2k)$-module $\T_k^{(\tla, 1)}$. 
For $\La = ((k),(k))$, $$M_{k,2,2} = k!$$ as operators on the irreducible $\T_k(2,2,2k)$-module $\T_k^{(\tla, 1)}$ and $$M_{k,2,2} = -k!$$ as operators on the irreducible $\T_k(2,2,2k)$-module $\T_k^{(\tla, -1)}$.
\end{enumerate}
\end{thm}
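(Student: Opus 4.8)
The plan is to prove part (a) by a direct comparison of the two operators on the standard basis $\{\vi\}$ of $\Vk$, specialised to $n = 2k$, and then to read off part (b) from part (a), Lemma \ref{actz}, and the bimodule decomposition of Theorem \ref{dvkp}(a). First I would describe $M_{k,2,2} = \phi_k(x_d)$. Because $d$ is the diagram in which every one of the $2k$ vertices is its own block, (\ref{xdcoeff}) shows that the coefficient $(\phi_k(x_d))^{i_1, \ldots, i_k}_{i_{1'}, \ldots, i_{k'}}$ is nonzero exactly when all $2k$ indices $i_1, \ldots, i_k, i_{1'}, \ldots, i_{k'}$ are pairwise distinct; since $n = 2k$, this requires $i_1, \ldots, i_k$ to be distinct and forces $(i_{1'}, \ldots, i_{k'})$ to run over all bijective assignments of the $k$ complementary values to the bottom positions. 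Hence $(\vi)M_{k,2,2} = \sum_{\sigma}\vip$, the sum over the $k!$ bijections $\sigma$ from $\{i_1, \ldots, i_k\}$ onto its complement in $\{1, \ldots, 2k\}$.

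Next I would expand $z = z_2 - z_1 = \sum_{g \in C}\varepsilon(g)\,g$ with $\varepsilon(g) = (-1)^{\#\{(-1,-1)\text{ pairs}\}}$, and parametrise $g \in C$ by a fixed-point-free involution $\pi = (p_1 q_1)\cdots(p_k q_k)$ of $\{1, \ldots, 2k\}$ together with signs $\varepsilon_1, \ldots, \varepsilon_k \in \{1,-1\}$, where $\varepsilon_t = a_{p_t} = a_{q_t}$; then $\varepsilon(g) = \prod_{t}\varepsilon_t$. Specialising the matrix description in (\ref{kact}) to $r = 2$ gives $g\cdot v_j = \varepsilon_{t}\,v_{\pi(j)}$ whenever $j$ lies in the $t$-th transposition, so $g\cdot(\vi) = \big(\prod_{s}\varepsilon_{t(i_s)}\big)\,v_{\pi(i_1)}\otimes\cdots\otimes v_{\pi(i_k)}$. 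Fixing $\vi$ and summing over the signs, the coefficient of $v_{\pi(i_1)}\otimes\cdots\otimes v_{\pi(i_k)}$ equals $\prod_{t}\sum_{\varepsilon_t = \pm 1}\varepsilon_t^{\,1 + n_t}$, where $n_t$ counts the $s$ with $i_s$ in the $t$-th transposition; each factor is $2$ if $n_t$ is odd and $0$ otherwise. As $\sum_t n_t = k$ among $k$ transpositions, all $n_t$ must equal $1$, so the $i_s$ are distinct and $\pi$ pairs each $i_s$ with a distinct complementary index. Therefore $\tfrac{1}{2^k}z\,(\vi) = \sum_{\sigma}v_{\sigma(i_1)}\otimes\cdots\otimes v_{\sigma(i_k)}$, the same sum over bijections $\sigma$ onto the complement that appeared for $M_{k,2,2}$; this proves part (a).

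For part (b), I would apply Schur-Weyl duality (Theorem \ref{swd}(a)) at $n = 2k$ together with the $(\C[G(2,2,2k)], \T_k(2,2,2k))$-bimodule decomposition $\Vk \cong \bigoplus_{(\tla,\delta)} V^{(\tla,\delta)}\otimes \T_k^{(\tla,\delta)}$ of Theorem \ref{dvkp}(a). Since $M_{k,2,2}$ is central in $\T_k(2,2,2k)$ and $z$ is central in $\C[G(2,2,2k)]$, on the summand indexed by $(\tla,\delta)$ the operator $z$ acts as a scalar on the factor $V^{(\tla,\delta)}$ and $M_{k,2,2}$ as a scalar on the factor $\T_k^{(\tla,\delta)}$. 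Part (a) identifies these two operators, so the scalar of $M_{k,2,2}$ on $\T_k^{(\tla,\delta)}$ is $2^{-k}$ times the scalar of $z$ on $V^{(\tla,\delta)}$. Inserting the values from Lemma \ref{actz} gives $0$ for $\La \neq ((k),(k))$, and $2^{-k}\cdot(\pm 2^k k!) = \pm k!$ on $\T_k^{(\tla,\pm 1)}$ when $\La = ((k),(k))$, as claimed.

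The step I expect to be the main obstacle is the sign bookkeeping in part (a): one must track how the decoration $\varepsilon_t$ on each transposition enters both through the coefficient $\varepsilon(g)$ coming from the splitting $z = z_2 - z_1$ and through the sign the diagonal action contributes at each tensor slot, and then recognise that the combined parity condition $\prod_t\sum_{\varepsilon_t = \pm 1}\varepsilon_t^{1+n_t}$ collapses the sum to involutions that form a transversal of $\{i_1, \ldots, i_k\}$. Once this collapse is in place, the remaining identification of the two sums over bijections is immediate.
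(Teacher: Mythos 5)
Your proof is correct, and its overall architecture matches the paper's: part (a) by computing both $M_{k,2,2}=\phi_k(x_d)$ and $z$ directly on the standard basis of $\Vk$ and comparing, part (b) by feeding part (a) and Lemma \ref{actz} through the bimodule decomposition of Theorem \ref{dvkp}(a). The one place where you genuinely diverge is the mechanism by which you show that only the ``perfect matching'' terms survive in $z(\vi)$. The paper argues by cancellation: for each permutation $\gamma$ that is not a transversal of $\{i_1,\ldots,i_k\}$ (or when some $i_s$ coincide), it exhibits an explicit sign-flipping correspondence between decorated elements of $C_1$ and of $C_2$ lying over $\gamma$, so their contributions to $z_2-z_1$ cancel pairwise; the surviving matchings are then counted by hand ($2^{k-1}$ elements in each class). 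You instead fix the involution $\pi$ and sum over the decorations $\varepsilon_1,\ldots,\varepsilon_k$ all at once, factoring the total coefficient as $\prod_t\sum_{\varepsilon_t=\pm1}\varepsilon_t^{1+n_t}$ and letting the parity constraint $n_t$ odd for all $t$, together with $\sum_t n_t=k$, force $n_t=1$ for every $t$. This is cleaner and less error-prone: it handles the repeated-index case, the non-transversal case, and the count $2^k$ in a single stroke, whereas the paper must treat these as separate subcases. The trade-off is only expository; both arguments are elementary and yield the same identity $z=2^kM_{k,2,2}$ on $\Vk$. Part (b) is carried out exactly as in the paper (which leaves it as ``clear''), with the useful extra remark that centrality of $z$ and of $M_{k,2,2}$ lets you read off the scalars factor by factor on each bimodule summand.
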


\begin{proof}
 \begin{enumerate}[(a)]
\item The action of $M_{k,2,2}$ on $\Vk$ is:
\begin{align*}
  M_{k,2,2} (v_{i_1} \otimes \cdots \otimes v_{i_k}) = 
  \begin{cases}
   \sum\limits_{\pi} v_{\pi(j_1)} \otimes \cdots \otimes v_{\pi(j_k)}, & \mbox{if } i_1, i_2, \ldots, i_k \mbox{ are distinct } \\
   & \mbox{elements of } \{1, \ldots, 2k\}, \\
   0, & \mbox{otherwise,}
  \end{cases}
\end{align*}
where $\pi$ varies over all the permutations of $ \{j_1, \ldots, j_k\} = \{1,\ldots, 2k\} \setminus \{i_1, \ldots, i_k\}$.

Now, we discuss the action of $z$ on $\Vk$.
Consider $v_{i_1} \otimes \cdots \otimes v_{i_k} \in \Vk$ such that $i_1, \ldots, i_k$ are distinct elements of $\{1, \ldots, 2k\}$. Then, 
$$(a_1, \ldots, a_{2k}, (i_1, j_1)  \cdots (i_{k},j_{k})) (v_{i_1} \otimes \cdots \otimes v_{i_k}) =  -(v_{j_1} \otimes \cdots \otimes v_{j_k}), $$
if $(a_1, \ldots, a_{2k}, (i_1, j_1)  \cdots (i_{k},j_{k})) \in C_1 $
and 
$$(a_1, \ldots, a_{2k}, (i_1, j_1)  \cdots (i_{k},j_{k})) (v_{i_1} \otimes \cdots \otimes v_{i_k}) = (v_{j_1} \otimes \cdots \otimes v_{j_k}), $$
if $(a_1, \ldots, a_{2k}, (i_1, j_1)  \cdots (i_{k},j_{k})) \in C_2$,
where in each case $$ \{j_1, \ldots, j_k\} = \{1,\ldots, 2k\} \setminus \{i_1, \ldots, i_k\}.$$
For a fixed $(i_1, j_1)  \cdots (i_{k},j_{k})$ element in $S_{2k}$, there are $2^{k-1}$ elements of the form $(a_1, \ldots, a_{2k},(i_1, j_1)  \cdots (i_{k},j_{k}))$ in each of $C_1$ and $C_2$. 

Consider an element of the form $(a_1, \ldots, a_{2k}, (x_1, y_1)  \cdots (x_{k},y_{k})) \in C$ such that at least one pair, say $\{x_1, y_1\} \subset \{i_1, \ldots, i_k\}.$ Then, one of $x_2, \ldots, x_k$, say $x_k$, is different from $i_1, \ldots, i_k$ and one can choose $y_k \in \{1, \ldots, 2k \} \setminus \{i_1, \ldots, i_k, x_k, y_2, \ldots, y_{k-1}\}.$ Now, $(a_{x_k}, a_{y_k}) = (1,1)$ or $(a_{x_k}, a_{y_k}) = (-1,-1)$ keeps the sign of the action of $(a_1, \ldots, a_{2k}, (x_1, y_1)  \cdots (x_{k},y_{k}))$ on $(v_{i_1} \otimes \cdots \otimes v_{i_k})$ same. 

Given $(b_1, \ldots, b_{2k}, (x_1, y_1)  \cdots (x_{k},y_{k})) \in C_1$ such that  $(b_{x_1}, b_{y_1}) = (1,1)$, we have the element $(f_1, \ldots, f_{2k}, (x_1, y_1)  \cdots (x_{k},y_{k})) \in C_2$, such that $(f_{x_i}, f_{y_i}) = (b_{x_i}, b_{y_i})$ for $i \neq k$ and $(f_{x_k}, f_{y_k}) = - (b_{x_k}, b_{y_k})$ and 
\begin{align*}
 & (b_1, \ldots, b_{2k}, (x_1, y_1)  \cdots (x_{k},y_{k})) (v_{i_1} \otimes \cdots \otimes v_{i_k}) \\
 & = (f_1, \ldots, f_{2k}, (x_1, y_1)  \cdots (x_{k},y_{k})) (v_{i_1} \otimes \cdots \otimes v_{i_k}).
\end{align*}
A similar analysis can be done if $(b_{x_1}, b_{y_1}) = (-1,-1)$.

If at least two of $i_1, \ldots, i_k$ are same, say $i_1 = i_2$, then we can find a pair $(a_{x_k}, a_{y_k})$ such that the action of $(a_1, \ldots, a_{2k}, (x_1, y_1)  \cdots (x_{k},y_{k}))$ on $(v_{i_1} \otimes \cdots \otimes v_{i_k})$ has the same sign whether $(a_{x_k}, a_{y_k}) = (1,1)$ or $(-1,-1)$. A similar analysis as above shows that corresponding to any element $(b_1, \ldots, b_{2k}, (x_1, y_1)  \cdots (x_{k},y_{k})) \in C_1$ such that  we can find the element $(f_1, \ldots, f_{2k}, (x_1, y_1)  \cdots (x_{k},y_{k})) \in C_2$, such that
\begin{align*}
 & (b_1, \ldots, b_{2k}, (x_1, y_1)  \cdots (x_{k},y_{k})) (v_{i_1} \otimes \cdots \otimes v_{i_k}) \\
 & = (f_1, \ldots, f_{2k}, (x_1, y_1)  \cdots (x_{k},y_{k})) (v_{i_1} \otimes \cdots \otimes v_{i_k}).
\end{align*}
Collecting all the cases, we have 
\begin{align*}
  z (v_{i_1} \otimes \cdots \otimes v_{i_k}) = 
  \begin{cases}
   (2^k)\sum\limits_{\pi} v_{\pi(j_1)} \otimes \cdots \otimes v_{\pi(j_k)}, & \mbox{if } i_1, \ldots, i_k \mbox{ are distinct } \\
   & \mbox{elements of } \{1, \ldots, 2k\}, \\
   0, & \mbox{otherwise,}
  \end{cases}
\end{align*}
where $\pi$ varies over all the permutations of $ \{j_1, \ldots, j_k\} = \{1,\ldots, 2k\} \setminus \{i_1, \ldots, i_k\}$.
\item The proof is clear by using part (a) of this theorem, Theorem \ref{dvkp}(a) and Lemma \ref{actz}.
\end{enumerate}
\end{proof}

For $ l \in \frac{1}{2}\Z_{> 0}$, define the Jucys-Murphy elements of $\T_l(r,p,n)$ as follows:
\begin{align*}
M_{\frac{1}{2},r} & = 1, \\
\mbox{and }M_{l,r} & = Z_{l,r} - Z_{l-\frac{1}{2},r}.
\end{align*}
In addition to these elements, $\T_k(2,2,2k)$ has one more Jucys-Murphy element which is $M_{k,2,2}$.

\begin{thm}\label{jm}
 Let $l \in \frac{1}{2}\Z_{\geq 0}$ and let $n$ be a positive integer.
 
 \begin{enumerate}[(a)]
  \item The elements $M_{\frac{1}{2},r}, M_{1,r}, \ldots, M_{l-\frac{1}{2},r},M_{l,r}$ commute with each other in $\T_l(r,p,n).$
  
  \item Assume that $n \geq 2l$. Let $\mathpzc{v}_l \in \vo_l(r,p,n)$ and $\T_{l}^{\pv_l}$ be the irreducible $\T_l(r,p,n)$-module parametrized by $\pv_l$. Then there is a unique, up to scalars, basis $$\{u_\mathpzc{P} \mid \PP \mbox{ is a path in } \widehat{\T}(r,p,n) \mbox{ from } \pv_0 = ((n), \emptyset, \ldots, \emptyset) \mbox{ to } \pv_l\}$$ of $\T_{l}^{\pv_l}$ such that, for all $\PP = (\pv_0, \pv_{\frac{1}{2}}, \pv_1, \ldots, \pv_l)$, and for all $k \in \Z_{\geq 0}$, $k \leq l$
  \begin{equation*}
   M_{k,r}(u_\PP) = c(\pv_k / \pv_{k-\frac{1}{2}})u_\PP,
  \end{equation*}
and   
\begin{equation*}
M_{k+\frac{1}{2},r}(u_\PP)  = - c(\pv_k / \pv_{k+\frac{1}{2}})u_\PP,
\end{equation*}
where $\pv_k / \pv_{k-\frac{1}{2}} $ and $\pv_k / \pv_{k+\frac{1}{2}}$ denote the box by which $\pv_k$ differs from $\pv_{k-\frac{1}{2}}$ and $\pv_{k+\frac{1}{2}}$ as $r$-tuple of Young diagrams respectively.
\item For $(r,p,n) = (2,2,2k)$, the element $M_{k,2,2}$ commutes with the elements $M_{\frac{1}{2},2},$ $M_{1,2},$ $\ldots, M_{2k-\frac{1}{2},2},M_{2k,2}$. The scalars by which the Jucys-Murphy elements of $\T_k(2,2,2k)$ act on the basis (as given by part (b)) of $\T_{k}^{(((k),(k)),1)}$ and $\T_{k}^{(((k),(k)),-1)}$ are same except for $M_{k,2,2}$.
\end{enumerate}
\end{thm}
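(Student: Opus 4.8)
The backbone of the proof is Theorem~\ref{zkr}(b): each $Z_{j,r}$ is a central element of $\T_j(r,p,n)$ which, for $n$ large enough, acts on every irreducible $\T_j(r,p,n)$-module by the scalar $\sum_b c(b)$, the sum of the contents of the underlying $r$-tuple of Young diagrams attached to its parameter. I would first record that, since the Bratteli diagram $\widehat{\T}(r,p,n)$ is a simple graph (Section~\ref{bd}), the tower of Tanabe algebras is multiplicity free, so that for $n\geq 2l$ each irreducible $\T_l(r,p,n)$-module $\T_l^{\pv_l}$ decomposes canonically into one-dimensional Gelfand--Tsetlin subspaces indexed by the paths $\PP=(\pv_0,\pv_{\frac{1}{2}},\dots,\pv_l)$ from $\pv_0$ to $\pv_l$. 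Fixing a nonzero $u_\PP$ in each such subspace produces a basis, unique up to scalars, exactly as in the Okounkov--Vershik set-up of Section~\ref{ov} (the analogue of Lemma~\ref{sc}).

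The key structural step, which I regard as the main obstacle, is to show that each intermediate central element $Z_{j,r}$ acts on $u_\PP$ by the scalar it takes on the vertex $\pv_j$ of the path, namely $\sum_b c(b)$ over the Young diagrams underlying $\pv_j$. This is the standard compatibility between a central element of an intermediate algebra and the Gelfand--Tsetlin filtration, but here it must be combined with the explicit embedding $\T_j(r,p,n)\hookrightarrow\T_l(r,p,n)$ described after the definition of Tanabe algebras and with Theorem~\ref{zkr}(b): concretely, one checks that $\C u_\PP$ lies inside a copy of the irreducible $\T_j(r,p,n)$-module $\T_j^{\pv_j}$ occurring in the restriction of $\T_l^{\pv_l}$ down the tower (governed by Corollary~\ref{brtkh}), on which $Z_{j,r}$ is the central scalar of Theorem~\ref{zkr}(b). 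Granting this, $M_{j,r}=Z_{j,r}-Z_{j-\frac{1}{2},r}$ is diagonal on $\{u_\PP\}$, and telescoping the content sums gives
\begin{equation*}
M_{k,r}(u_\PP)=c(\pv_k/\pv_{k-\frac{1}{2}})\,u_\PP,\qquad M_{k+\frac{1}{2},r}(u_\PP)=-\,c(\pv_k/\pv_{k+\frac{1}{2}})\,u_\PP,
\end{equation*}
because passing from $\pv_{k-\frac{1}{2}}$ to $\pv_k$ adds one box while passing from $\pv_k$ to $\pv_{k+\frac{1}{2}}$ deletes one box in the recursive construction of $\vo$, the normalization $M_{\frac{1}{2},r}=1$ being the base of this recursion; this proves part (b), with uniqueness being that of the Gelfand--Tsetlin basis of a multiplicity-free tower. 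Part (a) then follows because simultaneously diagonalizable operators commute: for $n\geq 2l$ the algebra $\T_l(r,1,n)$ is semisimple and the direct sum of its irreducibles is a faithful module on which all $M_{j,r}$ act diagonally in a common basis, so each commutator $[M_{i,r},M_{j,r}]$ vanishes there; to remove the restriction on $n$ I would invoke the polynomial-in-$n$ argument already used in Theorem~\ref{zkr}(b), since each such commutator is a polynomial in $n$ relative to the basis of $\T_l(r,1,n)$ that vanishes for all $n\geq 2l$, hence identically, and in particular inside $\T_l(r,p,n)\supseteq\T_l(r,1,n)$.

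For part (c) I specialize to $(r,p,n)=(2,2,2k)$. The element $M_{k,2,2}$ is central in $\T_k(2,2,2k)$ by Remark~\ref{222k}, so it commutes with every $M_{j,2}$, and by Theorem~\ref{mk22} its eigenvalues are $+k!$ on $\T_k^{(((k),(k)),1)}$ and $-k!$ on $\T_k^{(((k),(k)),-1)}$. It remains to compare the other Jucys--Murphy elements on these two modules. By Remark~\ref{cp} the stabilizer $C_\La$ is nontrivial only at the top level, when $\La=((k),(k))$, so a path to $(((k),(k)),1)$ and its counterpart to $(((k),(k)),-1)$ share the same vertices $\pv_0,\dots,\pv_{k-\frac{1}{2}}$ and the same box added at the final step, differing only in the sign $\delta$. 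Since the eigenvalues established in part (b) are contents of the underlying Young diagrams, which do not depend on $\delta$, all of $M_{\frac{1}{2},2},\dots,M_{k,2}$ act by identical scalars on the two modules; the only distinguishing element is $M_{k,2,2}$, giving the asserted $\pm k!$ and completing the proof.
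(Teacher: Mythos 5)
Your proposal is correct and follows essentially the same route as the paper: the multiplicity-free tower gives the one-dimensional path decomposition, $u_\PP$ generates a copy of $\T_j^{\pv_j}$ on which the central element $Z_{j,r}$ acts by the content sum of Theorem \ref{zkr}(b), and the eigenvalues of $M_{j,r}$ follow by telescoping; part (c) is handled identically via Theorem \ref{mk22} and the $\T_{k-\frac{1}{2}}$-isomorphism of the two top-level modules. The only divergence is in part (a), where the paper simply notes that $Z_{i,r},Z_{j,r}\in\T_j(r,p,n)$ with $Z_{j,r}$ central there (so they commute directly), whereas you reach the same conclusion by simultaneous diagonalizability on a faithful module plus the polynomial-in-$n$ continuation — valid, just less direct.
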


\begin{proof}
 \begin{enumerate}[(a)]
  \item For $i, j \in \frac{1}{2}\Z_{\geq 0}$ and $i \leq j \leq l$, we have $Z_{i,r}, Z_{j,r} \in \T_j(r,p,n)$ and $Z_{j,r}$ is a central element of $\T_j(r,p,n) \subseteq \T_l(r,p,n)$, thus $Z_{i,r} Z_{j,r} = Z_{j,r}Z_{i,r}.$ Since $M_{j,r} = Z_{j,r} - Z_{j-\frac{1}{2},r}$, thus Jucys-Murphy elements commute with each other in $\T_l(r,p,n)$.
  \item  The branching rule from $\T_j(r,p,n)$ to $\T_{j-\frac{1}{2}}(r,p,n)$ is multiplicity free for all $j \in \frac{1}{2}\Z_{\geq 0}$ and $n \geq 2j$. Thus, $\T_{l}^{\pv_l}$ has canonical decomposition as irreducible $\T_{l-\frac{1}{2}}$-module:
  $$ \T_{l}^{\pv_l} = \bigoplus\limits_{\pv_{l-\frac{1}{2}} \in \vo_{l-\frac{1}{2}}(r,p,n)} \T^{\pv_{l-\frac{1}{2}}}_{l-\frac{1}{2}}$$
  such that there is an edge from $\pv_{l-\frac{1}{2}}$ to $\pv_l$ in $\widehat{\T}(r,p,n)$. Further, iterating this decomposition, a canonical decomposition of $\T_{l}^{\pv_l}$ into irreducible $\T_{0}(r,p,n)$-modules is obtained:
  \begin{equation}\label{cdec}
  \T_{l}^{\pv_l} = \bigoplus\limits_{\PP} \T_{\PP},
  \end{equation}
  where $\T_{\PP}$ are one-dimensional $\T_{0}(r,p,n)$-modules and the sum is over all paths $\PP = (\pv_0, \pv_{\frac{1}{2}}, \pv_1, \ldots, \pv_l) $ such that $\pv_j \in \vo_j(r,p,n).$ The basis of $\T_{l}^{\pv_l}$ is obtained by choosing a nonzero vector $u_\PP$ in each $\T_{\PP}$ in the decomposition (\ref{cdec}). Such a basis is called the {\em Gelfand-Tsetlin basis} of the corresponding irreducible representation and it is unique, up to scalars. Using the decomposition (\ref{cdec}) and the definition of $u_{\PP}$, we get
  $$\T_j(r,p,n) u_{\PP} = \T_{j}^{\pv_j}, $$ for all $j \in \frac{1}{2}\Z_{\geq 0}$ and $j \leq l$ ,which implies that $u_{\PP}$ is a basis element of $\T_{j}^{\pv_j}$. Thus, for all $j \in \frac{1}{2}\Z_{\geq 0}$ and $j \leq l$, the action of $Z_{j,r}$ on $u_{\PP}$ is as a scalar given in Theorem \ref{zkr}(b). 
  Now, by the definition of Jucys-Murphy elements, we get their actions on $u_{\PP}$. 
  
 \item The element $M_{k,2,2}$ is a central element of $\T_k(2,2,2k)$. For $(r,p,n) = (2,2,2k)$ and $\La = ((k),(k))$, let $\pv_k = (\tla, 1) \in \vo_k(2,2,2k)$, $\pv'_k = (\tla, -1) \in \vo_k(2,2,2k)$. Then $$\T_k^{\pv_k} \cong  \T_k^{\pv'_k}$$ as $\T_{k-\frac{1}{2}}(r,p,n)$-modules. Thus, the part of the paths from $\pv_0$ to $\pv_k$ and $\pv'_k$ are same for $l <k $, $l \in \frac{1}{2}\Z_{\geq 0}$ and so, we have $$M_j(u_\PP) = M_j (u_{\PP'}), \; \; \; j \leq k  \mbox{ and } j \in \frac{1}{2}\Z_{\geq 0}$$ where $u_\PP$ and $u_{\PP'}$ are Gelfand-Tsetlin basis elements of $\T_k^{\pv_k}$ and $\T_k^{\pv'_k}$ respectively. However, by Theorem \ref{mk22}(b), we get that $$M_{k,2,2}u_\PP =  (k!)u_\PP \mbox{ and } M_{k,2,2}u_{\PP'} = -(k!) u_{\PP'},$$
 which proves the result. \qedhere
  \end{enumerate}
\end{proof}

\section*{Acknowledgements}
The authors thank Arun Ram for suggesting the problem, for valuable insights and for his online notes \cite{Ram}. The second author gratefully acknowledges the workshop ``Representation Theory of Symmetric Groups and Related Algebras'' at Institute of Mathematical Sciences, NUS, Singapore, where she had the opportunity to have discussions with Arun Ram. 
The first author has been supported by UFPA-CAPES/PNPD fellowship and visiting professorship at UFPA, Brazil. The second author is supported by post-doctoral fellowship NPDF under DST-SERB, India.

\bibliographystyle{alpha}
\bibliography{rtta}
 
\Addresses

\end{document}